\newcommand{\into}{\hookrightarrow}
\newcommand{\xto}{\xrightarrow}
\newcommand{\uoplus}{\sqcap}
\newcommand{\DGA}{\mathbf{DGA}}
\newcommand{\F}{\mathbb{F}}
\newcommand{\FF}{F}
\newcommand{\Z}{\mathbb{Z}}
\newcommand{\BB}{\mathbf{B}} 
\newcommand{\SB}{\mathbf{S}} 
\newcommand{\ff}{\mathfrak{f}} 
\newcommand{\mm}{m}
\newcommand{\Xh}{\mathcal{X}}
\newcommand{\Hom}{\mathrm{Hom}}
\newcommand{\uHom}{\underline{\Hom}}
\newcommand{\id}{\mathrm{id}}
\newcommand{\op}{\mathrm{op}}
\newcommand{\HH}{\mathrm{HH}}
\theoremstyle{plain}
\newtheorem{thm}[subsection]{Theorem}
\newtheorem{prop}[subsection]{Proposition}
\newtheorem{lemma}[subsection]{Lemma}
\newtheorem{cor}[subsection]{Corollary}
\theoremstyle{definition}
\newtheorem{defn}[subsection]{Definition}
\newtheorem{examples}[subsection]{Examples}
\newtheorem{notn}[subsection]{Notation}
\theoremstyle{remark}
\newtheorem{rem}[subsection]{Remark}
\begin{document}
\title{Real projective groups are formal}
\author{Ambrus P\'al}
\address{Department of Mathematics, 180 Queen's Gate, Imperial College, London, SW7 2AZ, United Kingdom}
\email{a.pal@imperial.ac.uk}

\author{Gereon Quick}
\address{Department of Mathematical Sciences, NTNU, NO-7491 Trondheim, Norway}
\email{gereon.quick@ntnu.no}

\begin{abstract} 
We prove that the mod $2$ cohomology algebras of real projective groups are intrinsically formal. As a consequence we derive the Hopkins--Wickelgren formality and the strong Massey vanishing conjecture for a large class of fields.   
\end{abstract}
\subjclass{12G05, 16E40, 18N40, 55S30.}

\maketitle

\section{Introduction}

Let $G$ be a profinite group. An {\it embedding problem for $G$} is a solid diagram:
\[
\xymatrix{
 & G \ar[d]^-{\phi}\ar@{.>}[ld]_-{\widetilde{\phi}}
\\
\widetilde{H} \ar[r]_-{\alpha} & H,}
\]
where $H$ and $\widetilde{H}$ are finite groups, the solid arrows are continuous homomorphisms and $\alpha$ is surjective. 
A {\it solution} of this embedding problem is a continuous homomorphism $\widetilde{\phi} \colon G\to \widetilde{H}$ which makes the diagram commutative.  
We say that the embedding problem above is {\it real} if for every involution $t\in G$ with $\phi(t)\neq 1$ there is an involution $\tilde{h} \in \widetilde{H}$ with $\alpha(\tilde{h})=\phi(t)$, i.e., if involutions do not provide an obstruction for the existence of a solution. 

\begin{defn} 
Following Haran and Jarden \cite{HJ} we say that a profinite group $G$ is {\it real projective} if $G$ has an open subgroup without $2$-torsion, and if every real embedding problem for $G$ has a solution. 
\end{defn}


For a prime number $p$ and a profinite group $G$, let $C^*(G,\mathbb F_p)$ denote the differential graded $\mathbb F_p$-algebra of continuous cochains of $G$ with values in
$\mathbb F_p$.  
The aim of this paper is to show that the differential graded algebra $C^*(G,\mathbb F_p)$ of a real projective group $G$ has the following strong property.  

\begin{defn} 
Let $\F$ be a field and let $C^*$ be a differential graded 
$\F$-algebra with cohomology $H^*$. 
Then $C^*$ is called {\it formal} if there is a sequence $C^* \leftarrow T^* \rightarrow H^*$ 
of quasi-isomorphisms of differential graded algebras between $C^*$ and $H^*$ where we consider $H^*$ as a differential graded $\F$-algebra with trivial differential. 
Moreover, a graded $\F$-algebra $A$ is called \emph{intrinsically formal} 
if every differential graded algebra $C^*$ with $H^*(C^*)\cong A$ is formal. 
\end{defn}


The main result of this paper is the following: 

\begin{thm}\label{thm:mainthmintro}
Let $G$ be a real projective profinite group. 
Then the graded algebra $H^*(G,\mathbb F_2)$ is intrinsically formal. 
In particular, the differential graded $\F_2$-algebra  $C^*(G,\mathbb F_2)$ is formal. 
\end{thm}

\begin{rem}
For an odd prime $p$ and $G$ real projective, the $\F_p$-algebra $H^*(G,\mathbb F_p)$ is concentrated in degrees $0$ and $1$, i.e., $H^i(G,\F_p)=0$ for $i\ge 2$. 
The intrinsic formality of $H^*(G,\mathbb F_p)$ then follows easily from the analog of Theorem \ref{thm:KadeishvilicritST} for an odd prime $p$ (the result holds for augmented graded algebras over any field, see  \cite[Theorem 4.7, page 85]{Seidel-Thomas}) and the argument of the proof of Lemma \ref{lemma:dualformal} below. 
Hence, together with Theorem \ref{thm:mainthmintro}, we can conclude that $H^*(G,\mathbb F_p)$ is intrinsically formal and therefore $C^*(G,\mathbb F_p)$ is formal for all primes $p$. 
\end{rem}

Our proof of Theorem \ref{thm:mainthmintro} relies on a theorem due to Kadeishvili which states that a positively graded algebra $A$ over a field is intrinsically formal 
if the Hochschild cohomology groups $\HH^{n,2-n}(A,A)$ vanish for all $n\ge 3$.  
In order to show this vanishing for $A=H^*(G,\mathbb F_2)$ we deduce from Scheiderer's work in \cite{Sch} the description of $H^*(G,\mathbb F_2)$ for a real projective group $G$ as the following type of quadratic algebra: 
First, let $B$ be a Boolean ring, i.e., $x^2=x$ for every $x\in B$, and let $B_*=\bigoplus_{j\ge 0}B_j$ be its associated Boolean graded algebra, i.e., 
the $\F_2$-algebra with $B_0=\mathbb F_2$ and, for every positive integer $j$, $B_j=B$. 
Second, for an $\F_2$-vector space $V$, let $V_*=\bigoplus_{j\ge 0}V_j$ denote the graded algebra with $V_0=\F_2$, $V_1=V$ and $V_j=0$ for $j\geq2$. 
We refer to Section \ref{section:def_Boolean_and_dual} and Definitions \ref{def:boolean} and \ref{defn:dual} for more details. 
We show that $H^*(G,\mathbb F_2)$ is a quadratic algebra given as the connected sum of quadratic algebras $B_* \sqcap V_*$ (see Definition \ref{def:connected_sum}) 
where $V$ is a suitable vector space and $B$ is the Boolean ring of continuous functions from the space of conjugacy classes of involutions in $G$ to $\F_2$. 
Then we calculate the Hochschild cohomology groups of such an algebra by an explicit description of the cocycles and coboundaries in the case of a locally finite Boolean graded algebra in Theorem \ref{thm:maintheoremfinite}. 
Finally, we use a spectral sequence argument and the vanishing of the  higher derived limits of certain non-vanishing Hochschild cohomology groups to deduce the general case. Our main technical result is the following, see Theorem \ref{main}: 

\begin{thm}\label{thm:main_HH_intro} 
Let $B$ be a Boolean ring with $|B|\ge 8$, and let $V$ be an $\F_2$-vector space. 
Then 
\[
\HH^{k,\mm}(B_*\uoplus V_*,B_* \uoplus V_*)=0
\]
for every $m< 0$ and $k \ge 0$ such that $k\neq 1-\mm$.  
\end{thm}

\begin{rem}
In fact, we thereby prove that for every profinite group $G$ such that $H^*(G,\mathbb F_2)$ is of the form $B_*\uoplus V_*$ as in Theorem \ref{thm:main_HH_intro}, 
the graded algebra $H^*(G,\mathbb F_2)$ is intrinsically formal. 
However, real projective groups are the main examples of groups with this type of cohomology algebra that we are aware of. 
\end{rem}

One of the main consequences of formality of a differential graded associative algebra 
is the vanishing of Massey products. 
We will formulate a stronger form of the latter next.

\begin{defn}\label{11.1} 
Let $\F$ be a field and let $(C^*,\cup,\delta)$ be a differential graded associative $\F$-algebra with differential $\delta$. 
For an element $a\in C^d$ we write $\bar{a}:=(-1)^{1+d}a$.  
For an integer $n\geq2$, let $a_1,a_2,\ldots,a_n$ be a set of cohomology classes with $a_i \in H^{d_i}$. 
A \emph{defining system} for the $n$-fold Massey product of $a_1,a_2,\ldots,a_n$ is a set $\{a_{i,j}\}$ of elements of $C^{d_{i,j}}$ for $1\leq i<j\leq n+1$ and $(i,j)\neq(1,n+1)$ such that
\[
\delta(a_{i,j})=\sum_{k=i+1}^{j-1}\bar{a}_{i,k}\cup a_{k,j}
\]
and $a_1,a_2,\ldots,a_n$ is represented by $a_{1,2},a_{2,3},\ldots,a_{n,n+1}$. 
The degree $d_{i,j}$ of $a_{i,j}$ satisfies  
\begin{align*}
d_{i,j} = \sum_{s=i}^{j-1}d_s -(j-1-i) ~ \text{for}~i+1\le j \le n+1.
\end{align*}
We say that the \emph{$n$-fold Massey product} of $a_1,a_2,\ldots,a_n$ is \emph{defined} if there exists a defining system. 
The $n$-fold Massey product $\langle a_1,a_2,\ldots,a_n\rangle_{\{a_{i,j}\}}$ of $a_1,a_2,\ldots,a_n$ with respect to the defining system $\{a_{i,j}\}$ is the cohomology class of
\[
\sum_{k=2}^n \bar{a}_{1,k}\cup a_{k,n+1}
\]
in $H^{d_{n+1}}$ where $d_{n+1}=\sum_{i=1}^n d_i-n+2$. 
Let $\langle a_1,a_2,\ldots,a_n\rangle$ denote the subset of $H^{d_{n+1}}$ consisting of the $n$-fold Massey products of $a_1,a_2,\ldots,a_n$ with respect to all defining systems. 
We say that the $n$-fold Massey product of $a_1,a_2,\ldots,a_n$ {\it vanishes} if $\langle a_1,a_2,\ldots,a_n\rangle$ contains zero. \\
We say that $C^*$ {\it satisfies strong Massey vanishing} if for very $a_1,a_2,\ldots,a_n$ as above the following assertion holds: 
if 
\begin{equation}\label{1.6.1}
a_1\cup a_2=a_2\cup a_3=\cdots=a_{n-1}\cup a_n=0,
\end{equation}
then the $n$-fold Massey product of $a_1,a_2,\ldots,a_n$ vanishes. 
When condition \eqref{1.6.1} above is satisfied we say that {\it all neighbouring cup products vanish for the $n$-tuple $a_1,a_2,\ldots,a_n$}.
\end{defn}

For real projective groups, strong Massey vanishing for cohomology classes in degree one was proven in \cite[Theorem 1.7]{PSz}. 
For pro $p$-groups of elementary type, strong Massey vanishing for cohomology classes in degree one was proven in \cite[Theorem 1.2]{Quadrelli}. 
(See also the comments below Theorem \ref{thm:introvcdoneformal}.) 
As we explain in section \ref{section:Massey_products}, Theorem \ref{thm:mainthmintro} implies the following result: 

\begin{thm}\label{thm:strongMasseyvanishing_intro}
Let $G$ be a real projective profinite group, and let $p$ be a prime number. 
Then $C^*(G,\mathbb F_p)$ satisfies strong Massey vanishing for cohomology classes in arbitrary degrees. 
\end{thm}

\begin{rem}
For an odd prime $p$, $C^*(G,\mathbb F_p)$ satisfies strong Massey vanishing since $H^i(G,\F_p)=0$ for $i \ge 2$. 
Hence the difficult case is $p=2$. 
We emphasise that Theorem \ref{thm:strongMasseyvanishing_intro} proves the strong Massey vanishing conjecture for real projective groups for cohomology classes in arbitrary degrees. 
We thereby provide the first nontrivial class of profinite groups for which strong Massey vanishing holds beyond degree one. 
\end{rem}

Our interest in the formality of profinite groups originates from the arithmetic of fields. 
Let $\FF$ be any field whose characteristic is not two, and let $\Gamma(\FF)$ denote its absolute Galois group. 
In \cite[Question 1.4 on page 1306]{HW} Hopkins and Wickelgren raised the question whether $C^*(\Gamma(\FF),\mathbb F_2)$ is formal. 
We know that the answer is no in general. 
In fact, Positselski proved in \cite[Section 9.11]{PoMoscow} that, for an odd prime number $p$ and a finite extension $\FF$ of $\mathbb Q_p$ which contains a primitive $p$-root of unity, the differential graded algebra $C^*(\Gamma(\FF),\mathbb F_2)$ is not formal (see also \cite[\S 6]{Po}). 
We note that there is a counterexample to the strong Massey vanishing for fourfold products over $\mathbb Q$, due to Wittenberg--Harpaz, in \cite[Example A.15]{GMT}. 
This implies that the absolute Galois group of $\mathbb Q$ is not formal. 
In the more recent paper \cite{MS} Merkurjev--Scavia provide a counterexample over a field containing the algebraic closure of 
$\mathbb Q$, so even a more restricted prediction of Positselski on the formality of fields is false, see \cite[Question 1.5 and Theorem 1.6]{MS} and \cite{MS2023}. 
However, we still expect that many fields have formal absolute Galois groups. 
Our result is actually the first affirmative contribution to this conjecture as we will explain next.

Recall that a field $\FF$ has virtual cohomological dimension $\leq 1$ if there is a finite 
extension $K/\FF$ with $\mathrm{cd}(\Gamma(K))\leq 1$. 
As we recall in section \ref{section:Galois_cohomology}, 
it is a consequence of classical Artin--Scheier theory and Haran's 
work that the absolute Galois group of a field with virtual cohomological dimension $\leq 1$ is real projective. 
Hence our results for real projective groups imply the following:

\begin{thm}\label{thm:introvcdoneformal}
Let $\FF$ be a field with virtual cohomological dimension $\leq 1$, and let $p$ be a prime number. 
Then $H^*(\FF,\mathbb F_p)$ is intrinsically formal, and hence $C^*(\Gamma(\FF),\F_p)$ is formal and satisfies strong Massey vanishing. 
\end{thm}

This theorem is also a new contribution to the Massey vanishing conjecture by Min\'a\v c and T\^an \cite[Conjecture 1.1]{MT0}, a very active field of research (see e.g.~\cite{EM17}, \cite{GMT}, \cite{HW}, \cite{MS}, \cite{MT1}, \cite{PSz}, \cite{Quadrelli}). 
In fact, our theorem shows Massey vanishing in arbitrary cohomology degrees for fields with virtual cohomological dimension $\leq 1$. 
This is a much stronger result than in the previously known cases where only the degree one Massey products were considered. \\

Another consequence of our methods is a positive case of a conjecture by Positselski and Voevodsky on the Koszulity of Galois cohomology \cite[\S 0.1, page 128]{PoGalois}. 
In \cite{MPQT}, other positive cases of this conjecture were proven. 
Recall that a positively graded algebra $A$ over $\F_p$  is called {\it Koszul} if the groups $H_{i,j}(A) = \mathrm{Tor}^A_{i,j}(\F_p,\F_p)$ vanish for all $i \neq j$, 
where the first grading $i$ is the homological grading and the second grading $j$ is the internal grading which is induced from the grading of $A$. 
Based on Scheiderer's structure theorem for real projective groups we prove:

\begin{thm}\label{thm:intro_Koszul}
Let $\FF$ be a field with virtual cohomological dimension $\leq 1$, and let $p$ be a prime number. 
Then the cohomology algebra $H^*(\FF,\F_p)$ is Koszul. 
\end{thm}

\subsection*{Contents} 
In Section \ref{section:Galois_cohomology} we review the work of Haran--Jarden who showed that every real projective group arises as an absolute Galois group of a pseudo real closed field and deduce the structure of the mod $2$ cohomology algebra of real projective groups from Scheiderer's local-global principle for real projective groups. 
In Section \ref{section:Massey_products} we show that formality implies strong Massey vanishing for differential graded algebras. 
In Section \ref{section:HH} we review a Kadeishvili's sufficient condition for intrinsic formality in terms of graded Hochschild cohomology. 
In Section \ref{section:Koszul} we recall the Koszul complex of quadratic algebras and the notion of Koszul algebras. 
In Section \ref{section:def_Boolean_and_dual} we introduce Boolean and dual graded algebras and show that their connected sum is Koszul. 
In Section \ref{section:Boolean_finite} we prove our main technical result on the graded Hochschild cohomology of the connected sum of dual and Boolean graded algebras, first for finite Boolean subrings. 
In Section \ref{section:ML} we recall some facts needed on Mittag-Leffler functors. 
In Section \ref{section:Boolean_infinite} we extend the arguments from Section \ref{section:Boolean_finite} to the case of an infinite Boolean ring 
and thereby prove the main technical result Theorem \ref{thm:main_HH_intro}.   
In Section \ref{section:Main_result} we combine the results of the previous sections to prove Theorem \ref{thm:mainthmintro} and its consequences.   
In Appendix \ref{app:Boolean} we provide proofs of the assertions on Boolean rings we use in Sections \ref{section:def_Boolean_and_dual}, \ref{section:Boolean_finite} and \ref{section:Boolean_infinite}. 
In Appendix \ref{section:appendix} we prove the existence of the spectral sequence that we use in Section \ref{section:Boolean_infinite}.

\subsection*{Acknowledgement} 
The first-named author wishes to thank the hospitality of the Departments of Mathematical Sciences at University of Oslo and NTNU in Trondheim. 
Both authors gratefully acknowledge the partial support by the RCN Frontier Research Group Project No.\,250399 {\it Motivic Hopf equations}, the RCN Project No.\,313472 {\it Equations in Motivic Homotopy}, and the project \emph{Pure Mathematics in Norway} funded by the Trond Mohn Foundation.  
We thank the anonymous referee for many comments and suggestions which helped improving the paper.


\section{Fields whose absolute Galois group is real projective and Scheiderer's theorem on their Galois cohomology}\label{section:Galois_cohomology}

In this section, we first review the work of Haran--Jarden who showed that every real projective group arises as an absolute Galois group of a pseudo real closed field. 
Then we deduce from Scheiderer's local-global principle the structure of the mod $2$ cohomology algebra of real projective groups.  

For a profinite group $G$, let $\mathrm{cd}(G)$ denote its cohomological dimension as defined in \cite{Se}. 
Haran gave the following very useful characterisation of real projective groups \cite[Proposition 2.2 on page 226]{Ha}: 

\begin{thm}[Haran]\label{haran} 
A profinite group $G$ is real projective if and only if $G$ has an open subgroup $G_0$ of index $\leq 2$ with $\mathrm{cd}(G_0)\leq 1$, and every involution $t\in G$ is self-centralising, that is, we have $C_G(t)=\{1, t\}$. 
\end{thm}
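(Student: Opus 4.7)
The proof naturally splits into two implications, each requiring different techniques.

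For the direction $(\Leftarrow)$, I would proceed as follows. Assume $G_0\leq G$ is open of index at most $2$ with $\mathrm{cd}(G_0)\leq 1$ and every involution in $G$ is self-centralising. Since $\mathrm{cd}(G_0)\leq 1$ characterises projective (hence free, and in particular torsion-free) profinite groups, all involutions of $G$ must lie in $G\setminus G_0$. Given a real embedding problem with data $\phi\colon G\to A$ and $\alpha\colon B\onto A$, I would first restrict to $G_0$ and, using projectivity of $G_0$, lift $\phi|_{G_0}$ to $\tilde\phi_0\colon G_0\to B$. If $G=G_0$ the argument is complete; otherwise pick an involution $t\in G\setminus G_0$ with $\phi(t)\neq 1$, choose a preimage involution $b\in B$ supplied by the real condition, and extend by $\tilde\phi(gt)=\tilde\phi_0(g)b$ for $g\in G_0$. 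Checking that this is a homomorphism reduces to the equivariance $\tilde\phi_0(tgt^{-1})=b\tilde\phi_0(g)b^{-1}$, which I would secure by refining the initial lift via an equivariant-lifting argument for the semidirect product $G_0\rtimes\langle t\rangle$, invoking the self-centralising hypothesis so that the induced $\Z/2$-action on $\Ker(\alpha)$ is fixed-point-free and the cohomological obstruction vanishes.

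For the direction $(\Rightarrow)$, assume $G$ is real projective. To produce $G_0$ I would start from the open torsion-free subgroup guaranteed by the definition and enlarge it, using that real embedding problems are solvable to force its complement to consist of a single coset represented by involutions, giving index at most $2$. To verify $\mathrm{cd}(G_0)\leq 1$, I would show every embedding problem for $G_0$ (automatically real since $G_0$ contains no involutions) can be induced up to a real embedding problem for $G$, then solved by real projectivity and restricted back. Finally, to show every involution $t\in G$ is self-centralising, I would argue by contradiction: given $g\in C_G(t)\setminus\{1,t\}$, choose a finite quotient $\phi\colon G\to A$ separating $1$, $t$, $g$, and construct a real embedding problem $\alpha\colon B\onto A$ whose solvability would require a commuting pair $\tilde\phi(t),\tilde\phi(g)$ with $\tilde\phi(t)^2=1$, engineered so that no such pair exists in $B$.

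The main obstacle, I expect, is the last step of the $(\Rightarrow)$ direction: one has to build, from abstract group theory alone, a finite real embedding problem whose solvability is obstructed precisely by the existence of a non-trivial element commuting with $t$. The real condition is a strong constraint, since $B$ must be chosen so that every involution of $A$ lifts to an involution of $B$, while simultaneously the centraliser of each such lifted involution must be small enough to rule out the hypothesised $g$. A natural candidate is an extension of $A$ by a carefully chosen $\F_2[A]$-module on which involutions act without fixed points, but ensuring the real condition holds on the nose and carrying out the contradiction is the delicate piece of finite group theory underlying the theorem.
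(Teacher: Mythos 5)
First, note that the paper does not prove this statement at all: it is quoted as Theorem A of Haran's paper \cite{Ha} and used as a black box, so there is no in-paper argument to compare yours with; it has to be judged as a stand-alone proof of a genuinely deep result, and as such it has gaps at exactly the decisive points. In the direction $(\Leftarrow)$, the whole weight rests on the ``equivariant-lifting'' step, and the justification you offer is incorrect: the $\Z/2$-action on $\ker\alpha$ given by conjugation with the lifted involution $b$ has no reason to be fixed-point-free (take $B=A\times\Z/2$ with central kernel, a perfectly legitimate real embedding problem), so the claimed vanishing of the obstruction has no basis. Worse, this is the \emph{only} place where the self-centralising hypothesis enters your argument, and it enters applied to the wrong object: the hypothesis $C_G(t)=\{1,t\}$ says that $t$ acts on $G_0$ without nontrivial fixed points, not that $b$ acts freely on $\ker\alpha$. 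If the obstruction vanished without a genuine use of $C_{G_0}(t)=1$, your argument would show that every $G=G_0\rtimes\Z/2$ with $\mathrm{cd}(G_0)\le 1$ is real projective, which contradicts the forward direction of the very theorem (compare $\Z/2\times\Z_2$, where the involution is central, with the pro-$2$ dihedral group $\Z_2\rtimes\Z/2$: both have the form $G_0\rtimes\Z/2$ with $\mathrm{cd}(G_0)=1$, and only the latter is real projective). Extracting the vanishing of the relevant (nonabelian) obstruction from fixed-point-freeness of the $t$-action on $G_0$ is essentially the content of Haran's theorem, not a routine cohomological remark; you also leave untreated the case $G\neq G_0$ with no involution outside $G_0$, where one needs Serre's theorem on cohomological dimension of open subgroups of torsion-free groups.

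In the direction $(\Rightarrow)$ there are two further gaps. To produce $G_0$ you cannot simply ``enlarge'' the open torsion-free subgroup given by the definition; what is needed is a continuous character $G\to\Z/2$ that is nontrivial on \emph{every} involution, and manufacturing one from solvability of real embedding problems is itself nontrivial (test problems of the shape $A\times\Z/2\onto A$ are always solvable by maps killing the second factor, so they force nothing). Likewise, $\mathrm{cd}(G_0)\le 1$ via ``inducing up'' embedding problems of $G_0$ to real ones of $G$ requires the induction machinery to be set up and checked, not just asserted; note the paper explicitly derives ``closed subgroups of real projective groups are real projective'' \emph{from} Haran's theorem, so such hereditary statements are not available to you beforehand. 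Finally, for the self-centralising property you concede that the construction of an obstructing real embedding problem is missing, and this is the crux: naive candidates tend to fail, either because the realness condition forces involutions of $A$ to lift (ruling out dihedral or $\Z/2\times\Z/4$ covers of $\Z/2\times\Z/2$) or because the problem turns out to be solvable after all (e.g.\ $\Z/2\times S_3\onto\Z/2\times\Z/2$ is real but admits a solution for $\Z/2\times\Z_2$). The actual proofs in the literature do not proceed by such ad hoc finite constructions but through Haran's Artin--Schreier structures (or Scheiderer's real cohomology), so what you have is a plausible outline of the statement's shape with the two essential mechanisms --- the obstruction vanishing in $(\Leftarrow)$ and the obstruction construction in $(\Rightarrow)$ --- absent or incorrectly justified.
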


Haran's theorem immediately implies that every closed subgroup of a real projective group $G$ is also real projective, and in particular every torsion element of $G$ has order dividing $2$. 

For a field $\FF$, let $\Gamma(\FF)$ denote its absolute Galois group,  
and we write $\mathrm{cd}(\FF) = \mathrm{cd}(\Gamma(\FF)$.  
Recall that a field $\FF$ has virtual cohomological dimension $\leq 1$ if there is a finite 
extension $K/\FF$ with $\mathrm{cd}(K)\leq 1$.  
Since the only torsion elements in the absolute Galois group of $\FF$ are the involutions coming from the orderings of $\FF$, it is equivalent (by a theorem in \cite{Se2}) to require $\mathrm{cd}(K)\leq 1$ for any fixed finite separable extension $K$ of $\FF$ without orderings, for example for $K = \FF(\mathbf i)$, where $\mathbf i =\sqrt{-1}$. 
In particular, if $\FF$ itself cannot be ordered (which is equivalent to $-1$ being a sum of squares in $\FF$), this condition is equivalent to $\mathrm{cd}(\FF)\leq 1$.

\begin{examples} 
Examples of fields $\FF$ which can be ordered with $\mathrm{cd}(\FF(\mathbf i))\leq 1$ include real closed fields, function fields in one variable over any real closed ground field, the field of Laurent series in one variable over any real closed ground field, and the field $\mathbb Q^{ab}\cap\mathbb R$ which is the subfield of $\mathbb R$ generated by the numbers $\cos(\frac{2\pi}{n})$, where $n\in\mathbb N$.
\end{examples}

The following is a well-known consequence of Artin--Schreier theory. 

\begin{lemma}\label{central} 
Let $\Gamma$ be an absolute Galois group and let $x\in\Gamma$ be an involution. 
Then the centraliser $C_{\Gamma}(x)$ is the group $C=\{1,x\}$ generated by $x$. 
\end{lemma}
\begin{proof} Let $K$ be a field whose absolute Galois group is isomorphic to $\Gamma$ and let $\overline K$ be the separable closure of $K$. 
Let $R\subset\overline K$ be the fixed field of $C$. 
Then the absolute Galois group of $R$ is $\mathrm{Gal}(\overline K/R)=C \cong \mathbb Z/2$, so $R$ is real closed by \cite[Proposition 2.4, page 452]{La}. 
In particular, $R$ has characteristic zero, so the same holds for all subfields of $\overline K$. 
Moreover, the latter is algebraically closed. 
Since $C_{\Gamma}(x)$ is the pre-image of $1$ under the map $y\mapsto
x^{-1}y^{-1}xy$, it is closed. Let $F\subset\overline K$ be the fixed field of
$C_{\Gamma}(x)$. Since $C\subseteq C_{\Gamma}(x)$ we get that $F$ is a subfield of $R$. Note that $C_{\Gamma}(x)$ leaves $R$ invariant; indeed if $\alpha\in R$, then $x\alpha=\alpha$, and hence for every $y\in C_{\Gamma}(x)$ we have $xy\alpha=yx\alpha=y\alpha$, so $y\alpha\in R$.
The extension $R/F$ is algebraic, so $R$ is the real closure of $F$. 
Therefore, the Galois group $\mathrm{Gal}(R/F)$ is trivial by \cite[Theorem 2.9, page 455]{La}. 
We get that $C_{\Gamma}(x)$ lies in the subgroup of $\Gamma$ fixing $R$, so by the profinite version of the Galois correspondence $C_{\Gamma}(x)\subseteq C$. 
\end{proof}

This shows that Theorem \ref{haran} has the following consequence:  

\begin{cor}\label{cor:vcd_one}
The absolute Galois group of a field $F$ is real projective if and only if $F$ satisfies $\mathrm{cd}(F(\mathbf i))\leq 1$. \qed
\end{cor}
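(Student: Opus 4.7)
The plan is to derive the corollary directly from Haran's Theorem \ref{haran} by matching up the two conditions there with the assumption $\mathrm{cd}(k(\mathbf{i}))\leq 1$, using the classical Artin--Schreier theory already invoked in the sentence preceding the statement.

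For the implication $(\Leftarrow)$, I would suppose $\mathrm{cd}(k(\mathbf{i}))\leq 1$ and let $G_0=\Gamma(k(\mathbf{i}))$. Then $G_0$ is an open subgroup of $\Gamma(k)$ of index at most $2$ (being $1$ when $\mathbf{i}\in k$ and $2$ otherwise), and $\mathrm{cd}(G_0)\leq 1$ by assumption. By Artin--Schreier, any involution $t\in\Gamma(k)$ has a real closed fixed field, and since real closed fields admit no non-trivial automorphisms the centraliser $C_{\Gamma(k)}(t)$ coincides with the Galois group of this real closure, which is $\{1,t\}$. Both hypotheses of Haran's theorem are thus verified and $\Gamma(k)$ is real projective.

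For the converse $(\Rightarrow)$, I would take a real projective $\Gamma(k)$ and apply Haran's theorem to obtain an open $G_0\leq\Gamma(k)$ of index $\leq 2$ with $\mathrm{cd}(G_0)\leq 1$. Its fixed field $K$ is then a finite separable extension of $k$ of degree $\leq 2$ satisfying $\mathrm{cd}(\Gamma(K))\leq 1$, so $k$ has virtual cohomological dimension $\leq 1$. By the equivalence recalled at the start of Section \ref{section:Galois_cohomology} (using the theorem of Serre in \cite{Se2} that the cohomological dimension is independent of the chosen finite separable extension without orderings), this is equivalent to $\mathrm{cd}(k(\mathbf{i}))\leq 1$.

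There really is no main obstacle: the substantive content is packaged in Haran's theorem (for the group-theoretic side) and in the Artin--Schreier plus Serre results (for the arithmetic translation), and this is precisely why the authors mark the statement with \emph{\textbackslash qed}. The only point worth being careful about is that in the $(\Rightarrow)$ direction one must invoke Serre's theorem to pass from an \emph{unspecified} finite separable extension of cohomological dimension $\leq 1$ to the specific extension $k(\mathbf{i})$.
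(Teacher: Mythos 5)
Your argument is correct and takes essentially the same route as the paper's (implicit) proof: the corollary is just Haran's Theorem \ref{haran} combined with the classical Artin--Schreier fact that involutions in absolute Galois groups are self-centralising, plus the reduction via Serre's theorem to the specific extension $k(\mathbf i)$. One small caveat: real closed fields can in general have non-trivial automorphisms; what makes your centraliser sketch work is that the restriction of a centralising element to the real closed fixed field fixes $k$, over which that field is algebraic, so it permutes the finite, totally ordered set of roots of any minimal polynomial in an order-preserving way and is therefore the identity --- but this is precisely the classical fact the paper itself only cites.
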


\begin{defn} 
Let $\FF$ be a field. 
The {\it real spectrum} $\mathrm{Spr}(F)$ of $F$ is the set of all orderings of $F$. 
We consider $\mathrm{Spr}(F)$ as a topological space by equipping it with the {\it Harrison topology}. 
The latter is defined as the topology on $\mathrm{Spr}(F)$ with sub-basis consisting of the sets 
$H(a)$ of all orderings on $F$ in which $a \in F$ is positive. 
By \cite[Chapter VIII, \S 6, Theorem 6.3]{Lam05}, the Harrison topology makes $\mathrm{Spr}(F)$ a compact and totally disconnected topological space. 
\end{defn}

\begin{defn} 
For every ordering $< \in \mathrm{Spr}(F)$, let $F_<$ denote the real closure of the ordered field $(\FF,<)$. 
We say that the field $F$ is {\it pseudo real closed} if every absolutely irreducible variety defined over $F$ which has a simple $F_<$-rational point for every $<\in\mathrm{Spr}(F)$ has an $F$-rational point.
\end{defn}

\begin{rem} 
By the work of Haran--Jarden \cite[Theorem 10.4 on page 487]{HJ}, the absolute Galois group of a pseudo real closed field is real projective. 
We can also deduce this result from the previous discussion as follows. 
Recall that a field $F$ is {\it pseudo algebraically closed} if every geometrically irreducible variety defined over $F$ has an $F$-rational point. 
By a theorem of Ax \cite{Ax}, the absolute Galois group $\Gamma$ of such a field is projective, and in particular it has cohomological dimension at most one. 
By Prestel's Extension Theorem \cite[Theorem 3.1, page 148]{Prestel}, every algebraic extension of a pseudo real closed field is pseudo real closed. 
This implies that if $F$ is pseudo real closed, then $F(\mathbf i)$ is pseudo algebraically closed since $F(\mathbf i)$ has no orderings.   
Hence $F$ has virtual cohomological dimension $\leq 1$, and in particular $\Gamma(F)$ is real projective by Haran's Theorem \ref{haran}. 
\end{rem}

The following result of Haran--Jarden is a strong converse of the remark above (see \cite[Theorem 10.4 on page 487]{HJ}).

\begin{thm}[Haran--Jarden]\label{haran-jarden} 
For every real projective profinite group $G$, there is a pseudo real closed field $F$ whose absolute Galois group is isomorphic to $G$. \qed
\end{thm}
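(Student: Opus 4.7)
The plan is to construct $k$ as a suitable direct limit of field extensions of a carefully chosen base field $k_0$, using the real projectivity of $G$ to solve the Galois-theoretic embedding problems that arise at each stage. In the limit the absolute Galois group will be forced to equal $G$, and the construction will be arranged so that the pseudo real closed property is automatic.

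First I would fix a base field $k_0$ equipped with a continuous surjection $\Gamma(k_0)\onto G$ such that the induced map on involutions is a bijection between a chosen transversal of the conjugacy classes of involutions of $\Gamma(k_0)$ and those of $G$, and such that the real spectrum $\mathrm{Spr}(k_0)$ realises the desired Boolean space of orderings (which, by Haran's theorem, is encoded by the conjugacy classes of involutions in $G$). Such a $k_0$ can typically be obtained by starting from a sufficiently transcendental extension of a real closed field (or of $\mathbb{Q}$) and taking a fixed field of a chosen closed normal subgroup in its absolute Galois group; one uses the structure theorem for free profinite products with involutions to match the $2$-torsion.

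Next I would carry out a transfinite recursion indexed by pairs $(V,(x_<)_{<})$, where $V$ is a geometrically irreducible variety defined over the current field and $x_<\in V(k_<)$ is a point for each ordering $<$ of the current field. At each stage, adjoining a generic $k$-point of $V$ corresponds, on the Galois side, to a finite embedding problem for $G$; the hypothesis that local real points exist at every $<$ translates into the statement that this embedding problem is \emph{real} in the sense of the introduction. Real projectivity then provides a solution, which is exactly what allows the field extension to be made without enlarging the Galois group above $G$ or destroying existing orderings. Taking the union over the whole recursion produces the desired field $k$.

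The main obstacle is the simultaneous bookkeeping: one must show that each step of the construction preserves both the isomorphism $\Gamma\cong G$ and the identification of $\mathrm{Spr}$ with the space of involutions, while still being cofinal enough among all admissible data $(V,(x_<))$ to force the pseudo real closed property in the limit. The delicate point is the compatibility at archimedean (real) places: the decomposition subgroups at orderings must remain of order two, which uses in an essential way the self-centralising condition $C_G(t)=\{1,t\}$ from Haran's theorem. Once this is controlled, the union of the tower is pseudo real closed by construction, and its absolute Galois group is $G$ as required.
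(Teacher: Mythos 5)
The paper offers no proof of this statement to compare against: it is quoted verbatim from Haran--Jarden (Theorem 10.4 of \cite{HJ}), whose proof is a substantial piece of work in its own right, so your attempt has to be judged on its own terms. Judged that way, it has a genuine gap at its central step. The claim that ``adjoining a generic $k$-point of $V$ corresponds, on the Galois side, to a finite embedding problem for $G$'' is not justified and is false as stated: passing from $k$ to a field in which $V$ has a generic point (e.g.\ the function field $k(V)$) does not change $\Gamma(k)$ by a finite amount controlled by an embedding problem --- the new absolute Galois group is an extension of $\Gamma(k)$ by the enormous group $\Gamma(\overline{k}(V))$, and nothing in real projectivity tames it. The entire difficulty of the theorem is exactly this: to force $V(k)\neq\emptyset$ for all admissible $V$ while keeping the absolute Galois group pinned to $G$ and keeping $\mathrm{Spr}(k)$ identified with the conjugacy classes of involutions; your sketch asserts that the recursion can be arranged to do this but supplies no mechanism, and the final paragraph concedes the ``bookkeeping'' without resolving it. Likewise the base field $k_0$, with a prescribed surjection $\Gamma(k_0)\to G$ matching involutions and realising the prescribed Boolean space of orderings, is itself a nontrivial theorem, not something that ``can typically be obtained.''

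For contrast, the known route (Haran--Jarden, modelled on the PAC prototype of Ax--Roquette and Lubotzky--van den Dries) does not produce points variety-by-variety through embedding problems. One first constructs a single pseudo real closed field whose absolute Galois group is a suitable ``real free'' group with prescribed space of orderings (this is where the heavy field-arithmetic input sits); one then uses real projectivity of $G$ to embed $G$ as a closed subgroup compatibly with the involution structure; finally one passes to the fixed field of this subgroup and invokes the theorem of Prestel that every algebraic extension of a pseudo real closed field is again pseudo real closed, so that the PRC property descends for free while the Galois group becomes exactly $G$. Your proposal contains neither the point-production mechanism nor any substitute for this descent of the PRC property, so both conclusions --- that the limit field is PRC and that its absolute Galois group is $G$ --- remain unproven.
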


\begin{rem}
Therefore, the Hopkins--Wickelgren formality conjecture for the class of fields of virtual cohomological dimension $\leq 1$ is the same as for the class of pseudo real closed fields, and it is a purely group-theoretical problem for the class of real projective profinite groups. 
\end{rem}


\begin{defn} 
Let $\Gamma$ be a profinite group. 
Let $\mathcal Y(\Gamma)$ denote the subset of elements of order $2$ in $\Gamma$. 
We equip $\mathcal Y(\Gamma)$ with the subset topology. 
Let $\mathcal X(\Gamma)$ denote the quotient of $\mathcal Y(\Gamma)$ by the conjugation action of $\Gamma$. 
We equip $\mathcal X(\Gamma)$ with the quotient topology. 
Note that when $\Gamma$ is the absolute Galois group of a field $F$ then $\mathcal X(\Gamma)$ is just the real spectrum of $F$ equipped with its Harrison topology by classical Artin--Schreier theory. 
\end{defn}

Let $C(\Xh(\Gamma),\F_2)$ be the ring of continuous functions from $\mathcal X(\Gamma)$ to $\mathbb \F_2$ where we equip the latter with the discrete topology. 

\begin{thm}[Scheiderer]\label{cor:scheiderer}
Let $\Gamma$ be a real projective group and $\mathcal X(\Gamma)$ be as above. 
Then, for each $n \ge 0$, there is a natural homomorphism 
\begin{align}\label{eq:map_pi_n}
\pi_n \colon H^n(\Gamma,\F_2) \to C(\Xh(\Gamma),\F_2)
\end{align}
which is an isomorphism for $n > 1$ and surjective for $n=1$. 
\end{thm}
\begin{proof} 
In \cite{Sch} Scheiderer constructs his theory of real \'etale cohomology in two cases: 
for schemes over which $2$ is invertible, and for profinite groups. 
In the case we are interested in, real projective groups, the two approaches are actually equivalent by Theorem \ref{haran-jarden}. 
The assertion then follows from \cite[Proposition 2.15]{Sch3}. 

We will briefly indicate how to use Theorem \ref{haran-jarden} and the results of \cite{Sch} to deduce the result. 
By Theorem \ref{haran-jarden}, we can find a field $F$ such that $\Gamma =\Gamma(F)$. 
By \cite[Theorem 6.6, page 61]{Sch} applied to the spectrum of $F$ and the constant discrete $\Gamma$-module $A=\F_2$,  there is a long exact sequence of the form 
\begin{align}\label{eq:ses_Thm6.6}
\cdots \to H^n_b(F,\F_2) \to H^n(\Gamma,\F_2) \xto{p^n_F} H^0(\Xh(\Gamma), \mathcal{H}^n(\F_2)) \to H^{n+1}_b(F,\F_2) \to \cdots 
\end{align}
where $\mathcal H^n(\F_2)$ denotes the $\Gamma$-equivariant higher direct image sheaf constructed in \cite[Section 8]{Sch} (see especially \cite[Remark 8.9, page 95]{Sch}), 
and $H_b$ denotes cohomology group with respect to the $b$-topology on $\mathrm{Spec}(F)$ as defined in \cite[Definition 2.3, page 11]{Sch}. 
For the identification of sequence \eqref{eq:ses_Thm6.6} with the long exact sequence of \cite[Theorem 6.6, page 61]{Sch}, 
we recall that $\Xh(\Gamma)$ is the real spectrum of the field $F$ (see \cite[9.2, page 97] {Sch}). 
We can then use \cite[Remarks 9.5 and 9.6, page 99]{Sch} to identify the remaining terms. 
In particular, we can identify 
the map $p^n_F$ in \eqref{eq:ses_Thm6.6} with the map $\pi_n$ in \eqref{eq:map_pi_n}.   
By \cite[Theorem 7.3, page 70]{Sch} (see also \cite[Corollary 9.8]{Sch}), 
the cohomological $2$-dimension of $F$ with respect to the $b$-topology is equal 
to the virtual cohomological $2$-dimension of $\Gamma$. 
Since the latter is $1$ by Theorem \ref{haran} and Corollary \ref{cor:vcd_one}, the group $H^{n}_b(F,\F_2)$ in sequence \eqref{eq:ses_Thm6.6} vanishes for $n>1$.  
Hence, since sequence \eqref{eq:ses_Thm6.6} is exact, the map $\pi_n$ in \eqref{eq:map_pi_n} is an isomorphism for $n>1$ and is still surjective for $n=1$. 
\end{proof}

\begin{rem}
We note that there is also a group-theoretic proof of Theorem \ref{cor:scheiderer} in \cite[Section 12]{Sch}. 
Since the virtual cohomological $2$-dimension of a real projective group is $1$, and $\Gamma$ contains no subgroup isomorphic to $\mathbb Z/2\times\mathbb Z/2$ by Haran's Theorem \ref{haran}, \cite[Theorem 12.13, page 151]{Sch} applies with coefficients the constant $\Gamma$-module $A=\F_2$, and the map $\pi_n$ in \eqref{eq:map_pi_n} is an isomorphism for every $n>1$. 
The case $n=1$ is proven in \cite[Remark 12.20.2, page 158]{Sch} where the argument of the second paragraph applies since every involution is self-centralising.  
A proof of Theorem \ref{cor:scheiderer} using Artin--Schreier structures which are discussed in \cite[Appendix B]{Sch} can also be deduced from \cite[Proposition B.8.1, page 257]{Sch}. 
\end{rem}


\section{Massey products and formality}\label{section:Massey_products}

In this section we show that formality implies strong Massey vanishing. 
This is a well known fact which can for example be deduced from the theory of $A_{\infty}$-algebras. 
However, for a lack of reference known to the authors to a proof which avoids the $A_{\infty}$-structures, 
especially in positive characteristic, 
and for the convenience of the reader we provide direct and self-contained proofs for the assertions needed. 
In this section we only assume familiarity with some basic terminology for model categories, 
and we recall and use a basic fact about weak equivalences in Lemma \ref{lemma:wetozigzag}.

\begin{notn} Let $\F$ be a field, and let $\DGA$ denote the category of associative differential graded algebras over $\F$. 
We will often refer to the objects in $\DGA$ just as dg-algebras. 
We will often denote the multiplication on objects in $\DGA$ as a cup product $\cup$. 
Let $C^*$ be a dg-algebra with differential $\delta \colon C^*\to  C^{*+1}$, and cohomology $H^*=\textrm{Ker}(\delta)/\textrm{Im}(\delta)$. 
We will often write $C$ instead of $C^*$. 
\end{notn}

We consider $\DGA$ as a model category with the standard model structure, as constructed in \cite[Theorem 5, page 58]{jardine}. 
It is determined by the following classes of weak equivalences and fibrations: a map is a  \emph{weak equivalence} if its underlying map of chain complexes is a quasi-isomorphism; and it is a \emph{fibration} if it is degree-wise surjective. 
Recall that maps which are both weak equivalences and fibrations are also called \emph{acyclic fibrations}. 
The \emph{cofibrations} are the maps which have the left lifting property with respect to all acyclic fibrations.

\begin{defn} 
Let $C$ be a dg-algebra with cohomology $H$. 
We consider $H$ as an object in $\DGA$ with trivial differential.  
Then $C$ is called {\it formal} if there is a diagram 
\begin{align}\label{eq:qisoformaldefn}
C \leftarrow T \to H
\end{align}
of weak equivalences of dg-algebras. 
\end{defn}

\begin{rem}\label{rem:zigzag_resolution}
We recall that, since all objects are fibrant in the standard model structure on $\DGA$, a sequence of quasi-isomorphisms 
\begin{align*}
C \leftarrow T_1 \to T_2 \leftarrow T_3 \to \ldots \leftarrow T_n \to B 
\end{align*} 
in $\DGA$ can be reduced to a sequence $C \leftarrow T \to B$ of just two quasi-isomorphism. 
\end{rem}


\begin{lemma}
\label{lemma:wetozigzag}
Let $f \colon A\to B$ be a weak equivalence of dg-algebras. 
Then there is a commutative diagram of morphisms of dg-algebras
\begin{align}
\label{diagram:wetozigzag}
\xymatrix{ \widetilde A\ar[r]^{j} \ar[d]_{p}& \widetilde B\ar[d]^{q} \\
	A\ar[r]^{f}  & B}
\end{align}
such that all maps are weak equivalences, the map $j$ is a homotopy equivalence, and $p$ and $q$ are fibrations. 
\end{lemma}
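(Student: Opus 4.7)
The plan is to derive the lemma directly from the standard model structure on $\DGA_k$ recalled just before the statement, exploiting the key fact that every dg-algebra is fibrant in that structure (since fibrations are the degree-wise surjections).

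First, I would take a cofibrant replacement of $A$. Applying the factorization axiom to the unique morphism from the initial object of $\DGA_k$ to $A$, one obtains a factorization as a cofibration followed by an acyclic fibration, yielding a cofibrant object $\widetilde{A}$ together with an acyclic fibration $p \colon \widetilde{A} \to A$.

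Next, I would apply the factorization axiom again to the composite $f \circ p \colon \widetilde{A} \to B$ to produce $\widetilde{A} \xrightarrow{j} \widetilde{B} \xrightarrow{q} B$ with $j$ a cofibration and $q$ an acyclic fibration. The square $q \circ j = f \circ p$ commutes by construction, both $p$ and $q$ are fibrations as required, and the 2-out-of-3 property applied to $q \circ j = f \circ p$ shows that $j$ is itself a weak equivalence. Since $j$ is a cofibration with cofibrant source, the object $\widetilde{B}$ is cofibrant as well.

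It then remains to produce a homotopy inverse for $j$. Since every object in $\DGA_k$ is fibrant, both $\widetilde{A}$ and $\widetilde{B}$ are cofibrant-fibrant, and $j$ is a weak equivalence between them. The standard Whitehead theorem in a model category asserts that any weak equivalence between cofibrant-fibrant objects is a homotopy equivalence, which supplies the desired homotopy inverse. No serious obstacle arises: the argument reduces to invoking the factorization axiom twice and citing the Whitehead theorem, once one has noted that every dg-algebra is fibrant in Jardine's model structure, so that the abstract notion of homotopy coincides with the one used in the paper.
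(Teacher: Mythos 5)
Your argument is correct, and it reaches the same conclusion as the paper's proof by a slightly different construction. The paper first factors $f$ itself as a cofibration $A\to B_1$ followed by an acyclic fibration $B_1\to B$, then takes a cofibrant replacement $\widetilde B\to B_1$ and a cofibrant replacement $\widetilde A\to A$, and finally produces $j$ by solving a lifting problem: since $\widetilde A$ is cofibrant and $q\colon\widetilde B\to B$ is an acyclic fibration, the map $f\circ p$ lifts through $q$. You instead take the cofibrant replacement $p\colon\widetilde A\to A$ first and then factor $f\circ p$ as a cofibration $j$ followed by an acyclic fibration $q$, so $\widetilde B$ and $j$ are produced in one stroke by the factorization axiom, the square commutes by construction, cofibrancy of $\widetilde B$ is automatic because $j$ is a cofibration with cofibrant source, and no lifting argument is needed. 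Both routes then conclude identically: $j$ is a weak equivalence by two-out-of-three, every object of $\DGA_k$ is fibrant, so $\widetilde A$ and $\widetilde B$ are cofibrant--fibrant and the Whitehead theorem makes $j$ a homotopy equivalence. Your version is marginally more economical (it dispenses with the auxiliary object $B_1$ and the lifting step), while the paper's version makes $j$ appear as a lift over $B$, which is a standard way to compare two resolutions; either argument fully establishes the lemma.
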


\begin{proof}
This is a formal consequence of the model structure on $\mathbf{DGA}$. 
Let 
\[
\xymatrix{A \ar[r]^-{i_1} & B_1 \ar[r]^-{q_1} & B}
\]
be a factorization of $f$ into a cofibration $i_1$ followed by an acyclic fibration $q_1$. Since $f$ is a weak equivalence, $i_1$ is a weak equivalence, too. 
If $B_1$ is not cofibrant, we choose a cofibrant replacement $\widetilde{B} \xto{\widetilde{q}} B_1$ such that $\widetilde{q}$ is an acyclic fibration and $\widetilde{B}$ is cofibrant. 
The composition $q=\widetilde{q}\circ q_1$ is an acyclic fibration as well. 
If $A$ is not cofibrant, we choose again a cofibrant replacement $\widetilde{A} \xto{p} A$ such that $p$ is an acyclic fibration and $\widetilde{A}$ is cofibrant. 
Now we consider the solid diagram 
\begin{align*}
\xymatrix{
& \widetilde{B} \ar[d]^{q} \\
\widetilde{A} \ar@{.>}[ur]^j \ar[r]_{f \circ p} & B.
}
\end{align*}
Since $\widetilde{A}$ is cofibrant and $q$ is an acyclic fibration, there is a dotted lift $j$ which makes the diagram commute. As $f \circ p$ is also a weak equivalence, $j$ must be a weak equivalence. Since $\widetilde{A}$ and $\widetilde{B}$ are cofibrant and fibrant objects (every object in $\DGA$ is fibrant), $j$ is a homotopy equivalence, in the sense that there is a map $\widetilde{B}\xto{\widetilde{h}} \widetilde{A}$ which is an inverse to $j$ up to homotopy. 
This proves the claim. 
\end{proof}

The existence of diagram \eqref{diagram:wetozigzag} will allow us to lift Massey products also along quasi-isomorphisms of dg-algebras. 

\begin{prop} 
\label{prop:liftofMasseyproducts}
Let $f \colon A\to B$ be a weak equivalence of dg-algebras, and 
let $a_1,a_2,\ldots,a_n$ be classes in $H^*(A)$.  
Let $b_1,b_2,\ldots,b_n$ be their images in $H^*(B)$. 
Then $f_* \colon H^*(A) \to H^*(B)$ maps the Massey product set $\langle a_1,a_2,\ldots,a_n \rangle$ bijectively onto the Massey product set $\langle b_1,b_2,\ldots,b_n \rangle$.
\end{prop}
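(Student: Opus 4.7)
The plan is to use Lemma \ref{lemma:wetozigzag} to factor $f$ as a zigzag
\[
A \xleftarrow{\;p\;} \widetilde A \xrightarrow{\;j\;} \widetilde B \xrightarrow{\;q\;} B
\]
in which $p, q$ are acyclic fibrations and $j$ is a homotopy equivalence in $\DGA_k$, and then to verify the statement for each of the three arrows separately. Functoriality is automatic: every dg-algebra map sends defining systems to defining systems, and hence carries $\langle a_1,\ldots, a_n\rangle$ into the corresponding Massey product set on the target. Moreover, since $f_*$ is bijective on all of cohomology (as $f$ is a quasi-isomorphism), the induced map of Massey product sets is automatically injective. Thus the only real content of the proposition is the surjectivity $f_*\colon \langle a_1,\ldots,a_n\rangle \onto \langle b_1,\ldots,b_n\rangle$, and it suffices to establish this surjectivity for each of $p$, $q$, and $j$.

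For an acyclic fibration $\pi\colon X \onto Y$ of dg-algebras, I would show by induction on $j-i$ that any defining system $\{y_{ij}\}$ on $Y$ lifts to a defining system $\{x_{ij}\}$ on $X$ with $\pi(x_{ij}) = y_{ij}$ on the nose. Given such lifts for all pairs of smaller gap, pick any $c_{ij}\in X$ with $\pi(c_{ij})=y_{ij}$ (using surjectivity of $\pi$) and set
\[
z_{ij} \;:=\; \delta(c_{ij}) \;-\; \sum_{k=i+1}^{j-1}\bar{x}_{ik}\cup x_{kj},
\]
with the convention that the sum is empty for $j=i+1$. A direct calculation shows that $z_{ij}$ lies in $\ker(\pi)$ (via the defining-system equation already satisfied by $\{y_{ij}\}$) and is a cocycle in $X$ (via Leibniz together with the defining-system equations for the already-constructed $x_{i'j'}$). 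Because $\pi$ is an acyclic fibration, the long exact sequence associated to $0\to\ker(\pi)\to X\to Y\to 0$ forces $\ker(\pi)$ to be acyclic, so $z_{ij}=\delta(e_{ij})$ for some $e_{ij}\in\ker(\pi)$; then $x_{ij}:=c_{ij}-e_{ij}$ has $\pi(x_{ij})=y_{ij}$ and $\delta(x_{ij}) = \sum_{k=i+1}^{j-1}\bar{x}_{ik}\cup x_{kj}$, as required. Applying $\pi$ to the resulting Massey representative $\sum_{k=2}^n \bar{x}_{1k}\cup x_{k,n+1}$ recovers the original Massey representative on $Y$, yielding surjectivity for $\pi_*$.

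For the homotopy equivalence $j\colon \widetilde A \to \widetilde B$ with strict dg-algebra inverse $h\colon \widetilde B\to\widetilde A$ satisfying $j\circ h\simeq\mathrm{id}$, the argument is short: given a defining system $\{b_{ij}\}$ in $\widetilde B$ for $b_1,\ldots,b_n$, the collection $\{h(b_{ij})\}$ is a defining system in $\widetilde A$ (since $h$ is a morphism of dg-algebras) representing the classes $h_*(b_i)=a_i$ (using $h_*\circ j_*=\mathrm{id}$). Its Massey representative is $h\bigl(\sum_{k=2}^n\bar{b}_{1k}\cup b_{k,n+1}\bigr)$, and $j_*$ carries its cohomology class to $\bigl[\sum_{k=2}^n\bar{b}_{1k}\cup b_{k,n+1}\bigr]$ because $j\circ h$ is chain-homotopic to the identity. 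Composing the three surjectivities around the zigzag then proves the proposition. I expect the main technical obstacle to lie in the inductive step of the acyclic fibration case, specifically in verifying that $z_{ij}$ is a cocycle: this requires combining the Leibniz rule with the sign convention $\bar{a}=(-1)^{1+d}a$ and the defining-system equations for all smaller gaps, which is routine but sign-sensitive bookkeeping rather than a conceptual difficulty.
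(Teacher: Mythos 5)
Your proposal is correct and follows essentially the same route as the paper: reduce via Lemma \ref{lemma:wetozigzag} to the case of acyclic fibrations plus a homotopy equivalence, then lift a defining system by induction on the gap $j-i$, the crux being that $\sum_{k}\bar{x}_{ik}\cup x_{kj}$ is a cocycle. The only cosmetic differences are that you produce the lift by correcting an arbitrary preimage with a coboundary from the acyclic kernel, where the paper packages the same correction as Lemma \ref{lemma:a2} (built on Lemma \ref{lemma:a1}), and that the sign-sensitive cocycle verification you defer is carried out explicitly in the paper via the identity $1+d_{ik}=d_{is}+d_{sk}$.
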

\begin{proof}
Let $\{a_{i,i+j}\} \subset A$ for $1\leq i<i+j\leq n+1$ be a defining system for the $n$-fold Massey product of $a_1,\ldots,a_n$. 
Since $f$ commutes with the differentials and cup-products, it sends the defining system $\{a_{i,i+j}\}$ to a defining system $\{b_{i,i+j} := f(a_{i,i+j})\} \subset B$ for the $n$-fold Massey product of $b_1,\ldots,b_n$. 
Hence $f_*$ sends elements in the Massey product set $\langle a_1,a_2,\ldots,a_n \rangle$ to elements in the Massey product set $\langle b_1,b_2,\ldots,b_n \rangle$. 
More concretely, $f_*$ sends the class of the cocycle $\sum_{k=2}^n \bar{a}_{1,k}\cup a_{k,n+1}$ in $\langle a_1,\ldots,a_n \rangle$ 
to the class of the cocycle $\sum_{k=2}^n \bar{b}_{1,k}\cup b_{k,n+1}$ in $\langle b_1,\ldots,b_n \rangle$. 
Since $f_*$ is an isomorphism and therefore injective, it remains to show that every defining system for the $n$-fold Massey product of $b_1,\ldots,b_n$ 
can be lifted to a defining system for the $n$-fold Massey product of $a_1,\ldots,a_n$.

By Lemma \ref{lemma:wetozigzag}, in order to prove the claim for $f$, it suffices to prove the corresponding claims for the maps $p$, $q$ and $j$ as in diagram \eqref{diagram:wetozigzag}.  
The claim for $j$ follows from the following lemma.  

\begin{lemma} 
\label{lem:liftofMasseyproducts_htpyequiv}
If $f$ is a homotopy equivalence of dg-algebras, then $f_*$ restricts to a bijection between $\langle a_1,a_2,\ldots,a_n \rangle$ and $\langle b_1,b_2,\ldots,b_n \rangle$. 
\end{lemma}
\begin{proof} 
Since $f$ is a homotopy equivalence, we can choose a morphism of dg-algebras $g \colon B \to A$ of $f$ such that there are quasi-isomorphisms $g \circ f \simeq \id_A$ and $f \circ g \simeq \id_B$.  
In particular, $g$ is itself a quasi-isomorphism of dg-algebras, 
and $g$ sends the defining system $\{b_{i,i+j}\}$ for the $n$-fold Massey product of $b_1,\ldots,b_n$ to a defining system $\{a'_{i,i+j}:= g(b_{i,i+j})\} \subset A$ for the $n$-fold Massey product of $a_1,\ldots,a_n$. 
The defining system $\{a'_{i,i+j}:= g(b_{i,i+j})\}$ is a lift of the defining system $\{b_{i,i+j}\}$. 
In fact, since $\sum_{k=2}^n \bar{b}_{1,k}\cup b_{k,n+1}$ is a cocycle in $A$, the quasi-isomorphism $g \circ f \simeq \id_A$ implies that 
\[
\sum_{k=2}^n \bar{a}_{1,k}\cup a_{k,n+1} - \sum_{k=2}^n \bar{a}'_{1,k}\cup a'_{k,n+1}
\]
is a coboundary in $A$. 
Thus, the cohomology classes of the cocycles $\sum_{k=2}^n \bar{a}_{1,k}\cup a_{k,n+1}$ and $\sum_{k=2}^n \bar{a}'_{1,k}\cup a'_{k,n+1}$, respectively, are the same element in the Massey product set $\langle a_1,a_2,\ldots,a_n \rangle \subset H^*(A)$. 
\end{proof}

To prove the claim of Proposition \ref{prop:liftofMasseyproducts} for the acyclic fibrations $p$ and $q$, we are going to use the following basic statements about quasi-isomorphisms. 

\begin{lemma}\label{lemma:a1} 
Let $f \colon A\to B$ be an acyclic fibration of dg-algebras.
Let $b \in B^n$ be such that $\delta b=0$. 
Then there is an $a\in A^n$ such that $f(a)=b$ and $\delta a=0$.
\end{lemma}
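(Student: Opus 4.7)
The plan is the standard ``lift-then-correct'' argument that is available because acyclic fibrations of chain complexes have acyclic kernels. First, since $f$ is a fibration, it is degreewise surjective, so we can pick some $a' \in A^n$ with $f(a') = b$. In general $a'$ need not be a cocycle, but since $f$ commutes with the differential, $f(\delta a') = \delta f(a') = \delta b = 0$, so $\delta a' \in K^{n+1}$, where $K := \ker f$ is a subcomplex of $A$.

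Next I would use the short exact sequence of underlying chain complexes
\[
0 \to K \to A \xrightarrow{f} B \to 0
\]
(which exists because $f$ is surjective) together with the induced long exact sequence in cohomology. Because $f$ is a weak equivalence, i.e.\ a quasi-isomorphism, the map $H^*(A) \to H^*(B)$ is an isomorphism in every degree, and the long exact sequence forces $H^*(K) = 0$. Hence $K$ is acyclic.

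Therefore the class $[\delta a'] \in H^{n+1}(K)$ vanishes, so there exists $c \in K^n$ with $\delta c = \delta a'$. Setting $a := a' - c$ gives $\delta a = \delta a' - \delta c = 0$ and $f(a) = f(a') - f(c) = b - 0 = b$, as required.

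The argument is entirely routine and I do not foresee a real obstacle; the only thing to emphasise is that everything takes place at the level of underlying chain complexes — we do not need to use the multiplicative structure of $A$ or $B$ at all, which is why the result can be stated for acyclic fibrations of dg-algebras even though the corrective element $c$ is produced purely from chain-level considerations.
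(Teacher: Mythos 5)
Your proof is correct, but it runs in the opposite order from the paper's and leans on different machinery. The paper first uses that $f$ is a quasi-isomorphism to produce a \emph{cocycle} $a'\in A^n$ with $[f(a')]=[b]$, writes $f(a')=b-\delta c$, and then uses degreewise surjectivity to lift $c$ to some $d\in A^{n-1}$ and corrects by the coboundary $\delta d$; everything is done by hand with elements of $A$ and $B$, and the kernel of $f$ never appears. You instead first lift $b$ to an arbitrary $a'$ (surjectivity), observe $\delta a'\in K^{n+1}$ with $K=\ker f$, and then invoke the long exact sequence of $0\to K\to A\to B\to 0$ to see that $K$ is acyclic, so $\delta a'=\delta c$ for some $c\in K^n$ and $a=a'-c$ works. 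Both arguments use exactly the two hypotheses (quasi-isomorphism and degreewise surjectivity), and yours is complete: $K$ is indeed a subcomplex, $\delta a'$ is a cocycle of $K$, and the vanishing of $H^{*}(K)$ follows from the long exact sequence as you say. What your route buys is the conceptual statement that acyclic fibrations have acyclic kernels, which packages the correction step cleanly; what the paper's route buys is that it stays entirely elementary (no kernel complex, no long exact sequence) and its element-level manipulations are of the same flavour as the companion Lemma \ref{lemma:a2}, where the analogous correction is again done by explicit lifting rather than via the kernel. You are also right that only the underlying chain complexes matter; the paper's proof likewise never uses the multiplication.
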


\begin{proof} 
Since $f$ is a quasi-isomorphism, there is an $a'\in A^n$ such that $\delta a'=0$ and $f(a')$ and $b$ have the same cohomology class. 
In other words, there is a $c\in B^{n-1}$ such that $f(a')=b-\delta c$. Since $f$ is surjective, there is a $d\in A^{n-1}$ such that $f(d)=c$. Set $a=a'+\delta d$. Then $\delta a=\delta a'+\delta^2 d=0$, and 
\[
f(a)=f(a')+f(\delta d)=f(a')+\delta f(d)=b-\delta c+\delta c=b. \qedhere
\]
\end{proof}

\begin{lemma}\label{lemma:a2} 
Let $f \colon A\to B$ be an acyclic fibration of dg-algebras. 
Let $d\in A^n$ and $c \in B^{n-1}$ be such that $\delta d=0$ and $\delta c=f(d)$. Then there is a $\bar{d}\in A^{n-1}$ such that $f(\bar{d})=c$ and $\delta \bar{d}=d$.
\end{lemma}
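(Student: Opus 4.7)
The plan is to lift $c$ in two stages: first find any preimage of $c$ under $f$, then correct it using the acyclicity of $\ker f$ so that its differential becomes $d$. More concretely, since $f$ is a fibration, it is degree-wise surjective, so I would start by choosing any $e \in A^{n-1}$ with $f(e)=c$. The measure of how far $e$ is from being the desired $\bar d$ is the element $x := d - \delta e \in A^n$. By construction $f(x) = f(d) - \delta c = 0$, so $x \in \ker f$, and $\delta x = \delta d - \delta^2 e = 0$, so $x$ is a cocycle in the subcomplex $\ker f \subset A$.

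Next I would use that $\ker f$ is acyclic as a chain complex. This follows formally from the short exact sequence of complexes $0 \to \ker f \to A \xrightarrow{f} B \to 0$ (exactness on the right uses the fibration property, i.e.\ surjectivity of $f$), whose associated long exact sequence in cohomology, combined with the hypothesis that $f$ is a quasi-isomorphism, forces $H^*(\ker f) = 0$. Consequently the cocycle $x \in (\ker f)^n$ is a coboundary \emph{within} $\ker f$, so there exists $g \in A^{n-1}$ with $f(g) = 0$ and $\delta g = x = d - \delta e$.

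Setting $\bar d := e + g$ then finishes the argument: we have $f(\bar d) = f(e) + f(g) = c + 0 = c$, and $\delta \bar d = \delta e + \delta g = \delta e + (d - \delta e) = d$, as required.

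There is no serious obstacle here; the only subtlety is keeping the correction $g$ inside $\ker f$ so that the value $f(\bar d) = c$ is not disturbed. This is exactly why the acyclicity of $\ker f$ — not merely the quasi-isomorphism property of $f$ as used in Lemma~\ref{lemma:a1} — is the right tool: it provides a primitive of $x$ that automatically vanishes under $f$.
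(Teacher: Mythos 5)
Your proof is correct, but it runs in the opposite order from the paper's. You first lift $c$ arbitrarily to some $e\in A^{n-1}$ using surjectivity, and then repair the differential by noting that the defect $x=d-\delta e$ is a cocycle in $\ker f$, which is acyclic by the long exact sequence of $0\to\ker f\to A\xrightarrow{f}B\to 0$ together with the quasi-isomorphism hypothesis; the primitive $g$ of $x$ inside $\ker f$ then corrects $e$ without disturbing its image. The paper instead argues in two steps that never mention $\ker f$: since $f_*$ is injective on cohomology and $[f(d)]=[\delta c]=0$, the cocycle $d$ is already exact in $A$, say $d=\delta d'$; then $c-f(d')$ is a cocycle in $B$, which by Lemma \ref{lemma:a1} lifts to a cocycle $e\in A^{n-1}$, and $\bar d=d'+e$ works. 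Both arguments use exactly the same two inputs (degree-wise surjectivity and quasi-isomorphism); yours packages the quasi-isomorphism hypothesis once and for all as acyclicity of the kernel, which is slightly more structural and makes it transparent why the correction term does not change $f(\bar d)$, while the paper's version is a self-contained diagram chase that reuses Lemma \ref{lemma:a1} and avoids invoking the long exact sequence. Either proof is acceptable.
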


\begin{proof} Since $f$ is a quasi-isomorphism and the cohomology class of $f(d)$ is trivial, there is a $d'\in A^{n-1}$ such that $\delta d'=d$. Then $\delta c=f(\delta d')=\delta f(d')$, so $\delta(c-f(d'))=0$. By Lemma \ref{lemma:a1} above there is an $e\in A^{n-1}$ such that $\delta e=0$ and $f(e)=c-f(d')$. Set $\bar{d}=d'+e$. Then $\delta(\bar{d})=\delta(d')+\delta(e)=d+0=d$, and $f(\bar{d})=f(d')+f(e)=f(d')+c-f(d')=c$.
\end{proof}


We are now ready to conclude the proof of Proposition \ref{prop:liftofMasseyproducts}. 
Since $p$ and $q$ are acyclic fibrations, the proof is reduced to the case when $f$ is an acyclic fibration. 
Therefore we assume from now on that $f$ is an acyclic fibration. 
We need to show 
that any defining system for the $n$-fold Massey product of $b_1,\ldots, b_n$ can be lifted to a defining system of $a_1,\ldots,a_n$. 
Let $\{b_{i,i+j}\} \subset B$ for $1\leq i<i+j\leq n+1$ be a defining system for the $n$-fold Massey product of $b_1,\ldots,b_n$. 
By Lemma \ref{lemma:a1}, we can choose cocycles $a_{1,2},a_{2,3},\ldots,a_{n,n+1} \in A$ with $f(a_{i,i+1})=b_{i,i+1}$ and $a_{i,i+1}$ represents the cohomology class $a_i$ for each $i$. 
Next, for a given $j \geq 2$, assume that we have constructed elements $a_{i,i+l}$ for all $i$ and all $l<j$ satisfying 
$f(a_{i,i+l}) = b_{i,i+l}$ 
and
\begin{align}\label{eq:ailMassey}
\delta a_{i,i+l} & = \sum_{k=i+1}^{i+l-1} \bar{a}_{i,k}\cup a_{k,i+l}
\end{align}
where we recall that $d_{i,k}$ denotes the degree of $a_{i,k}$ and $\bar{a}_{i,k}=(-1)^{1+d_{i,k}}a_{i,k}$. 
We claim that $d:=\sum_{k=i+1}^{i+j-1} \bar{a}_{i,k} \cup a_{k,i+j}$ is a cocycle. 
Applying $\delta$ and using $ (-1)^{1+d_{i,k}} \cdot  (-1)^{d_{i,k}}=-1$ yields 
\begin{align*}
\delta(d) = 
\sum_{k=i+1}^{i+j-1} \left( (-1)^{1+d_{i,k}}\delta(a_{i,k})\cup a_{k,i+j}  -a_{i,k} \cup \delta(a_{k,i+j}) \right).  
\end{align*}
Using $\delta(a_{i,i+1}) = 0  =  \delta(a_{i+j-1,i+j})$ and the relations given by \eqref{eq:ailMassey} we get 
\begin{align*}
\delta (d)  = &  \sum_{k=i+2}^{i+j-1}  (-1)^{1+d_{i,k}} \left( \sum_{s=i+1}^{k-1}  (-1)^{1+d_{i,s}}a_{i,s} \cup a_{s,k} \right)  \cup a_{k,i+j}  \\
 & - \sum_{k=i+1}^{i+j-2} a_{i,k}\cup \left( \sum_{r=k+1}^{i+j-1}  (-1)^{1+d_{kr}} a_{k,r}\cup a_{r,i+j} \right).  
\end{align*}
Now we use that, for each triple $i<s<k$, we have $1+d_{ik}=d_{is}+d_{sk}$ and get 
\begin{align*}
& \delta (d)  =   \sum_{k=i+2}^{i+j-1} \left( \sum_{s=i+1}^{k-1}  a_{i,s} \cup \bar{a}_{s,k} \right)  \cup a_{k,i+j}  
 - \sum_{k=i+1}^{i+j-2} a_{i,k}\cup \left( \sum_{r=k+1}^{i+j-1}  \bar{a}_{k,r}\cup a_{r,i+j} \right)\\
 = &  \left( \sum_{i+1 \le s < k \le i+j-1}  a_{i,s} \cup \bar{a}_{s,k}  \cup a_{k,i+j} \right) 
 - \left( \sum_{i+1\le k < r \le i+j-1} a_{i,k}\cup \bar{a}_{k,r}\cup a_{r,i+j} \right) = 0. 
\end{align*}
Hence we can apply Lemma \ref{lemma:a2} with $d=\sum_{k=i+1}^{i+j-1} (-1)^{1+d_{i,k}} a_{i,k} \cup a_{k,i+j}$ and $c=b_{i,i+j}$ to get an element $a_{i,i+j} \in A$ 
which satisfies 
\begin{align*}
f(a_{i,i+j}) = b_{i,i+j} ~ \text{and}~
\delta a_{i,i+j} = \sum_{k=i+1}^{i+j-1}\bar{a}_{i,k}\cup a_{k,i+j}. 
\end{align*}
Continuing this process we get a defining system for the classes $a_1,a_2,\ldots,a_n$. 
\end{proof}

As a consequence we get that formality implies strong Massey vanishing: 

\begin{thm}\label{thm:formalityandMasseyproducts}
Every formal differential graded algebra satisfies strong Massey vanishing.  
\end{thm}
\begin{proof} Let $C$ be a formal dg-algebra and let $H$ denote the cohomology of $C$. 
By Proposition \ref{prop:liftofMasseyproducts}, it suffices to show that if we consider $H$ as a dg-algebra with trivial differential, then for every $n$-tuple $a_1,\ldots,a_n$ in $H$ of elements whose neighbouring cup products vanish the corresponding Massey product is defined and vanishes in $H$. 
Indeed, let $\{a_{i,i+j}\} \in H$ for $1\leq i<j\leq n+1$ and $(i,j)\neq(1,n+1)$ be such that $a_{i,i+1}=a_i$ for every index $i$ and $a_{i,j}=0$, if $|i-j|>1$.
Then $\{a_{i,i+j}\}$ is a defining system for the Massey product of $a_1,\ldots,a_n$ whose corresponding Massey product vanishes. 
\end{proof}


\section{Graded Hochschild cohomology and formality}\label{section:HH}

One way to prove the formality of a dg-algebra is to show that certain Hochschild cohomology groups vanish. 
We are first going to recall the definition of graded Hochschild cohomology and then discuss the relation to formality. 
Since we only consider algebras and dg-algebras over the field $\F_2$, we will from now on work over $\F_2$. 
Unless stated otherwise $\otimes$ will denote $\otimes_{\F_2}$.

\begin{defn} 
Let $A=\bigoplus_{i \ge 0}A_i$ be a graded $\F_2$-algebra. 
A {\it graded $A$-module} is an $A$-module $M$ with a decomposition $M=\bigoplus_{i\in\mathbb Z}M_i$ into subgroups such that $A_i\cdot M_j\subseteq M_{i+j}$ for every $i \ge 0, j\in\mathbb Z$. 
A homomorphism of graded $A$-modules from $M=\bigoplus_{i\in\mathbb Z}M_i$ to $N=\bigoplus_{i\in\mathbb Z}N_i$ is an $A$-module homomorphism $\phi \colon M \to N$ which respects the gradings, i.e., we have $\phi(M_i)\subseteq N_i$ for every $i$. 
Let $\underline{\mathrm{Hom}}_A(M,N)$ denote the set of graded $A$-module homomorphisms from $M$ to $N$; it has the structure of an $A$-module. 
Graded $A$-modules with graded $A$-module homomorphisms form an abelian category.  
For $s\in \Z$ and a graded $A$-module $M$, we write $M[s]$ for the graded $A$-module given in degree $n$ by $M[s]_n = M_{n+s}$.  
\end{defn}

Let $A^e = A\otimes A^{\op}$ denote the enveloping algebra of $A$ where $A^{\op}$ denotes the opposite algebra of $A$ (see e.g., \cite[Section 1.1]{witherspoon}). 
The ring $A^e$ inherits a structure of a graded $\F_2$-algebra from $A$. 
We now define the graded Hochschild cohomology groups of a graded $\F_2$-algebra $A$ with coefficients in a graded $A^e$-module $M$. 
First, we recall the bar complex (see e.g., \cite[Section 1.1]{witherspoon}). 
For $n \ge 0$, we write $B_n(A) = A^{\otimes (n+2)}$ and define the map $(d_B)_n=d_n \colon B_{n}(A) \to B_{n-1}(A)$ by 
\[
d_n(a_0 \otimes a_1 \otimes \cdots \otimes a_{n+1}) = \sum_{i = 0}^n a_0 \otimes \cdots \otimes a_ia_{i+1} \otimes \cdots \otimes a_{n+1}
\]
for $a_0, \ldots, a_{n+1} \in A$. 
Then the \emph{bar complex} of $A$ is the complex 
\[
(B(A),d_B): \quad \cdots \xto{d_3} A^{\otimes 4} \xto{d_2} A^{\otimes 3} \xto{d_1} A^{\otimes 2} \xto{\mu} A \to 0
\] 
where $\mu$ denotes the multiplication on $A$. 
For every $n\ge 0$, $B_n(A)$ may be considered as an $A^e$-module via 
\[
(a \otimes a') \cdot (a_0 \otimes \cdots \otimes a_{n+1}) = (aa_0) \otimes a_1 \otimes \cdots \otimes a_n \otimes (a_{n+1}a'). 
\] 
We equip $B_n(A)$ with the grading induced by the grading of $A$, and we consider $d_n$ as a graded homomorphism.

\begin{defn}\label{def:HHgraded}
Let $M$ be a graded $A^e$-module. 
The graded Hochschild cohomology group $\HH^{n,s}(A,M)$ is defined as the $n$th cohomology 
of the cochain complex $(\uHom_{A^e}(B_*(A),M[s]),d_B^*)$ with differential $d_B^*f = f \circ d_B$. 
\end{defn}

\begin{rem}\label{rem:computing_graded_HH}
We now recall 
that graded Hochschild cohomology can be computed using a slightly simpler complex. 
For every $n$, there is a natural $\F_2$-linear isomorphism 
\begin{align}\label{eq:contractingiso_bar}
\uHom_{A^e}(B_n(A), M) \xto{\cong} \uHom_{\F_2}(A^{\otimes n}, M) 
\end{align}   
defined by sending a map $f$ to the $\F_2$-linear map which sends $a_1 \otimes \cdots \otimes a_n$ to $f(1\otimes a_1 \otimes \cdots \otimes a_n \otimes 1)$. 
The maps $d^*_{n} \colon \uHom_{\F_2}(A^{\otimes n}, M) \to \uHom_{\F_2}(A^{\otimes (n+1)}, M)$, 
defined for every $n$ by 
\begin{align*}
d^*_n(f)(a_1 \otimes \cdots \otimes a_{n+1}) & = a_1f(a_2 \otimes \cdots \otimes a_{n+1}) \\
& + \sum_{i=1}^n f(a_1 \otimes \cdots \otimes a_ia_{i+1} \otimes \cdots \otimes a_{n+1}) \\
& + f(a_1 \otimes \cdots \otimes a_n)a_{n+1}, 
\end{align*}
turn $(\uHom_{\F_2}(A^{\otimes *}, M),d^*)$ into a cochain complex. 
We can then check that isomorphism \eqref{eq:contractingiso_bar} induces an isomorphism of cochain complexes 
\begin{align*}
(\uHom_{A^e}(B_*(A), M),d_B^*) \xto{\cong} (\uHom_{\F_2}(A^{\otimes *}, M), d^*). 
\end{align*}   
Hence, for every $n$ and $s$, we have an isomorphism 
\begin{align}\label{eq:HH_iso_graded_ST}
\HH^{n,s}(A,M) \cong H^n(\uHom_{\F_2}(A^{\otimes *}, M[s]), d^*).
\end{align} 
In fact, isomorphism \eqref{eq:HH_iso_graded_ST} is used to define Hochschild cohomology in \cite[Section 4, page 85]{Seidel-Thomas} (see also \cite[Section 4]{BKS}).  
\end{rem}


We will now recall a criterion on the Hochschild cohomology of a graded algebra which implies formality. 
In fact, it implies the following stronger property.

\begin{defn}\label{def:intrinsic_formality} 
A graded $\F_2$-algebra $A$ is called \emph{intrinsically formal} 
if every differential graded algebra $C$ with $H^*(C)\cong A$ is formal. 
\end{defn}

\begin{rem}
We note that intrinsic formality is a property formulated for graded algebras while formality is a property formulated for dg-algebras. 
However, one may call a dg-algebra $C$ intrinsically formal if its cohomology algebra is. 
Intrinsic formality is a much stronger property than formality. 
\end{rem} 

An equivalent characterisation of intrinsic formality is given as follows: 

\begin{lemma}\label{lem:intrinsic_formality_alternative}
Let $A$ be a graded $\F_2$-algebra. 
Then $A$ is intrinsically formal if and only if any two differential graded algebras with cohomology isomorphic to $A$ are quasi-isomorphic. 
\end{lemma}
\begin{proof}
Let $C$ be a dg-algebra with $H^*(C) \cong A$. 
If any two dg-algebras with cohomology isomorphic to $A$ are quasi-isomorphic, then $C$ and $H^*(C)$ are quasi-isomorphic. 
Hence $C$ is formal. 
This shows that $A$ is intrinsically formal. 

To prove the other implication,  
let $C_1$ and $C_2$ be dg-algebras with cohomology algebras $H^*(C_1) \cong A \cong H^*(C_2)$. 
If $A$ is intrinsically formal, then $C_1$ and $C_2$ are formal. 
This means that there are quasi-isomorphisms of dg-algebras $C_1 \leftarrow T_1 \to A$ and $A \leftarrow T_2 \to C_2$ where we consider $A$ as a dg-algebra with trivial differential. 
By Remark \ref{rem:zigzag_resolution}, we can then find a dg-algebra $T$ and quasi-isomorphisms of dg-algebras $C_1 \leftarrow T \to C_2$. 
Thus, $C_1$ and $C_2$ are quasi-isomorphic. 
\end{proof}

The following theorem is a special case of a result due to Kadeishvili (see \cite{Kadeishvili82} and \cite{Kadeishvili88}). 
For a proof we refer to the work of Seidel and Thomas \cite[Theorem 4.7, page 85]{Seidel-Thomas}. 

\begin{thm}[Kadeishvili]
\label{thm:KadeishvilicritST}
Let $A=\bigoplus_{i\ge 0}A_i$ be a non-negatively graded $\F_2$-algebra with $A_0=\F_2$. 
Assume that 
\[
\HH^{n,2-n}(A)=0 ~\text{for all}~n \ge 3.
\] 
Then $A$ is intrinsically formal. \qed 
\end{thm}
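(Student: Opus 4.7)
The plan is to prove this via Kadeishvili's theory of minimal $A_\infty$-models. Let $B$ be any dg-algebra with $H^*(B) \cong A$; I want to show $B$ is formal. The first step is to invoke Kadeishvili's minimal model theorem to transfer the dg-structure on $B$ to a minimal $A_\infty$-structure on its cohomology $A$: operations $m_n \colon A^{\otimes n} \to A$ for $n \ge 2$ with $m_1=0$ and $m_2$ equal to the given associative multiplication on $A$. The resulting minimal $A_\infty$-algebra is $A_\infty$-quasi-isomorphic to $B$, and $B$ will be formal precisely when this minimal model is equivalent to the strict dg-algebra $A$, i.e.\ when all higher $m_n$ with $n\ge 3$ can be gauged away.

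The central point is that each $m_n$ has bidegree $(n,2-n)$ when viewed as a Hochschild cochain: $m_n$ is an element of $C^n(A,A)$ of internal (cohomological) degree $2-n$. Suppose by induction on $n \ge 3$ that we have modified the $A_\infty$-structure, within its quasi-isomorphism class, so that $m_3=\cdots=m_{n-1}=0$. The $A_\infty$-associativity relation of level $n+1$ then reduces to
\[
\partial(m_n) = 0,
\]
where $\partial$ is the Hochschild differential on $C^*(A,A)$ induced by $m_2$. Hence $m_n$ defines a class $[m_n] \in \HH^{n,2-n}(A,A)$. By hypothesis this group vanishes, so $m_n = \partial \varphi$ for some $\varphi \in C^{n-1}(A,A)$ of internal degree $2-n$.

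Next I would use $\varphi$ to construct an $A_\infty$-automorphism $f = (f_1,f_2,\ldots)$ of the underlying graded module $A$ with $f_1 = \mathrm{id}$, $f_i = 0$ for $2 \le i \le n-2$, and $f_{n-1} = \varphi$. Transporting the $A_\infty$-structure along $f$ produces a new minimal $A_\infty$-structure $\{m'_i\}$ on $A$, $A_\infty$-quasi-isomorphic to the old one, with $m'_2 = m_2$, $m'_i = 0$ for $3 \le i \le n-1$ (preserved because $f_i = 0$ in that range), and $m'_n = m_n - \partial \varphi = 0$. Iterating this construction for every $n \ge 3$ and taking the transfinite composition (or the appropriate limit in the pro-completion of $A_\infty$-structures) yields an $A_\infty$-structure on $A$ that is quasi-isomorphic to $B$ and has only $m_2$ nonzero. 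This is precisely the strict dg-structure on $A$, so $B$ and $A$ are linked by a zigzag of quasi-isomorphisms of dg-algebras, proving formality of $B$ and hence intrinsic formality of $A$.

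The main obstacle is the bookkeeping in the inductive step: verifying that the Hochschild-theoretic obstruction of $m_n$ really lands in bidegree $(n,2-n)$, that the $A_\infty$-relations indeed yield $\partial(m_n)=0$ once $m_3,\dots,m_{n-1}$ vanish, and that the gauge transformation determined by $\varphi$ both kills $m_n$ and preserves the vanishing of the lower operations. Each of these involves careful sign and degree tracking in the $A_\infty$-formalism, but once the bidegrees are nailed down, the argument is a direct obstruction-theoretic induction controlled exactly by the groups $\HH^{n,2-n}(A,A)$.
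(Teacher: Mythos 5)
Your argument is correct and is essentially the standard proof: the paper itself does not reprove this statement but cites Seidel--Thomas (Theorem 4.7), whose proof is exactly the minimal $A_\infty$-model plus inductive obstruction argument you describe, with the obstruction to killing $m_n$ living in $\HH^{n,2-n}(A,A)$ and the gauge transformation $f_1=\mathrm{id}$, $f_{n-1}=\varphi$ removing it while preserving the lower vanishing. The only point stated without detail --- that an $A_\infty$-quasi-isomorphism between the strict algebra $A$ and the dg-algebra $B$ yields a genuine zigzag of dg quasi-isomorphisms --- is a standard fact (e.g.\ via the bar--cobar construction) and matches the cited treatment.
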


Theorem \ref{thm:KadeishvilicritST} and Lemma \ref{lem:intrinsic_formality_alternative} then imply: 
 
\begin{cor}\label{cor:Kadeishvilicrit}
Let $C$ be a dg-algebra over $\F_2$ with $C^0=\F_2$.  
Assume that 
\[
\HH^{n,2-n}(H^*(C)) = 0 ~\text{for all}~n \ge 3.
\] 
Then $C$ is formal. \qed
\end{cor}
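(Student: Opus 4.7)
The statement is labeled as a corollary of Theorem \ref{thm:KadeishvilicritST}, so the plan is simply to unwind the definitions and apply that theorem. Set $A := H^*(C)$. Since $C$ is a dg-algebra concentrated in non-negative degrees with $C_0 = k$, the cohomology $A = H^*(C)$ is a non-negatively graded $k$-algebra, and in degree zero we have $A_0 = H^0(C) = \ker(\delta\colon C^0 \to C^1) = k$ (the differential out of $C_0 = k$ is forced to vanish on the unit, and $A_0$ is a subalgebra of $C_0 = k$).

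The hypothesis of the corollary is precisely that $A$ satisfies Kadeishvili's vanishing in the sense of Definition \ref{def:Kadeishvilisvanishing}, i.e.,
\[
\HH^{n,2-n}(A, A) = 0 \quad \text{for all } n \geq 3.
\]
Therefore Theorem \ref{thm:KadeishvilicritST} applies to $A$, and we conclude that $A$ is intrinsically formal. By the definition of intrinsic formality, every dg-algebra whose cohomology is isomorphic (as a graded algebra) to $A$ is formal. Since $C$ is such a dg-algebra by construction, $C$ is formal.

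There is no real obstacle here, as the corollary is a direct specialization: the only thing to check is that the graded algebra $A = H^*(C)$ fits into the framework of Theorem \ref{thm:KadeishvilicritST}, which is immediate from the assumption $C_0 = k$. The serious mathematical content is in Kadeishvili's theorem itself, which the paper quotes from \cite{Seidel-Thomas}.
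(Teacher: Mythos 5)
Your proposal is correct and matches the paper's intent exactly: the paper states the corollary with no written proof, as it is an immediate specialization of Theorem \ref{thm:KadeishvilicritST} to $A = H^*(C)$, and your verification that $A$ is non-negatively graded with $A_0 = k$ plus the appeal to intrinsic formality is precisely the implicit argument. Nothing further is needed.
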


We will refer to the above criteria using the following terminology:

\begin{defn}\label{def:Kadeishvilisvanishing}
For a non-negatively graded $\F_2$-algebra $A=\bigoplus_{i\ge 0}A_i$ with $A_0=\F_2$, we say that $A$ satisfies {\it Kadeishvili's vanishing} if 
\[
\HH^{n,2-n}(A)=0 \textrm{ for every $n\geq 3$.} 
\] 
For a dg-algebra $C$ over $\F_2$ with $C^0=\F_2$, we say that $C$ satisfies {\it Kadeishvili's vanishing} if the graded $\F_2$-algebra $H^*(C)$ satisfies Kadeishvili's vanishing. 
\end{defn}

\begin{rem}
For the reader familiar with $A_{\infty}$-algebras we note that the condition of Theorem \ref{thm:KadeishvilicritST} can be interpreted and the proof can be summarised as follows: 
Let $A$ be a graded algebra and let $C$ be a dg-algebra with an isomorphism of graded algebras $A \xto{\cong} H^*(C)$. 
Recall that we can view $C$ as as $A_{\infty}$-algebra with trivial higher multiplication maps $m_n$ for $n\ge 3$. 
Kadeishvili showed that it is always possible to equip $A$ with the structure of an $A_{\infty}$-algebra such that there is a quasi-isomorphism of $A_{\infty}$-algebras $q \colon A \to C$ which lifts the isomorphism $A \xto{\cong} H^*(C)$ of graded algebras (see \cite{Kadeishvili82} and \cite{Kadeishvili88}).  
Since the higher level $A_{\infty}$-maps $m_n$ on $A$ may be nontrivial for $n\ge 3$, the $A_{\infty}$-algebra structure on $A$ may be different from the one it inherits as a graded algebra. 
Moreover, $q$ is not a morphism of dg-algebras in general. 
However, each map $m_n$ is a cocycle in the Hochschild complex and represents a cohomology class in $\HH^{n,2-n}(A)$. 
Thus, if $\HH^{n,2-n}(A) = 0$ for all $n\ge 3$, then all higher multiplication maps on $A$ must be trivial. 
One can then lift the $A_{\infty}$-morphism $q \colon A \to C$ via a certain tensor algebra $T$ to a zigzag of quasi-isomorphisms $A \leftarrow T \to C$ of dg-algebras. 
Then it suffices to compose with the algebra isomorphism $H^*(C) \cong A$ to conclude that $C$ is formal. 
We refer to \cite[Proof of  Theorem 4.7]{Seidel-Thomas} for the details.
\end{rem}


\section{Graded Hochschild cohomology of Koszul algebras}\label{section:Koszul}

We continue to work over the field $\F_2$. 
Unless stated otherwise $\otimes$ will denote $\otimes_{\F_2}$. 

\begin{defn}
A \emph{quadratic algebra} is a non-negatively graded $\F_2$-algebra $A=\bigoplus_{i\ge 0}A_i$ such that $A_0=\F_2$ and $A$ is generated over $\F_2$ by $A_1$ with relations of degree two. 
Explicitly, let 
\[
T(A_1) = \F_2 \oplus A_1 \oplus (A_1\otimes A_1) \oplus \cdots = \bigoplus_{i\ge 0} A_1^{\otimes i}
\]
be the free tensor algebra of the $\F_2$-vector space $A_1$. 
Then $A$ is quadratic if the canonical map $\tau \colon T(A_1) \to A$ is surjective and $\ker(\tau)$ is generated by its component $R = \ker(\tau)\cap (A_1\otimes A_1)$ of degree two, so that $A=T(A_1)/(R)$, where $(R)$ denotes the ideal generated by $R$.  
A quadratic algebra $A$ is called \emph{locally finite} if each $A_i$ is a finite-dimensional vector space over $\F$. 
\end{defn} 


For a quadratic algebra $A=\bigoplus_{i\ge 0}A_i$ we denote by $A_+$ the ideal $A_+=\bigoplus_{i >0}A_i$. 
We will later use the following natural way to create new quadratic algebras out of old (see for e.g. \cite[Chapter 3.1]{ppqa}):

\begin{defn}\label{def:connected_sum}
Let $A=\bigoplus_{i\ge 0}A_i$ and $B=\bigoplus_{i\ge 0}B_i$ be quadratic algebras. 
The \emph{connected sum} of $A$ and $B$, denoted by $A\uoplus B$, is the graded ring with $(A\uoplus B)_0=\F_2$, $(A\uoplus B)_i = A_i \oplus B_i$ for $i>0$ and multiplication $A_+B_+$ and $B_+A_+$ is set to be zero. 
\end{defn}

The connected sum $C:=A \uoplus B$ is a quadratic algebra. 
Let $R_A\subseteq A_1\otimes A_1$ and $R_B \subseteq B_1 \otimes B_1$ denote the relations in the tensor algebras defining $A$ and $B$, respectively. 
Then we can write $C$ as a quotient of a tensor algebra 
\[
C = T(A_1 \oplus B_1)/(R_C)
\]
where $R_C \subseteq C_1^{\otimes 2}$ is given by $R_C = R_A \oplus R_B \oplus (A_1\otimes B_1) \oplus (B_1 \otimes A_1)$. 

To every quadratic algebra we can associate the following chain complex: 

\begin{defn}\label{def:Koszulcomplex}
Let $V$ be an $\F_2$-vector space and let $T(V)= \bigoplus_{i\ge 0} V^{\otimes i}$ be its tensor algebra. 
We consider $T(V)$ as a graded $\F_2$-algebra with $|v|=1$ for all $v \in V$. 
Let $R \subset V \otimes V$ and let $A=T(V)/(R)$ be the associated quadratic algebra where $(R)$ denotes the ideal generated by $R$ in $T(V)$. 
We denote by $K_i^i(A)$ the $\F_2$-linear subspace defined by $K_0^0(A) = \F_2$, $K_1^1(A) = V$, $K_2^2(A) = R$ and 
\begin{align*}
K_i^i(A) = \bigcap_{0\le j \le i-2} V^{\otimes j} \otimes R \otimes V^{\otimes i-j-2} \subset V^{\otimes i} ~ \text{for}~i\ge 3.
\end{align*}
We set $K_i(A)=A\otimes K_i^i(A)\otimes A$. 
We consider $K_i(A)$ as an $A^e$-module with $A$ acting on the leftmost factor $A$ and $A^{\op}$ acting on the rightmost factor $A$, respectively. 
For each $i\ge 0$, we define a homomorphism $d=d_i \colon K_i(A) \to K_{i-1}(A)$ by
\[
d_i (a\otimes x_1 \otimes \cdots \otimes x_i \otimes a') = (ax_1) \otimes x_2 \otimes \cdots \otimes x_i \otimes a' + 
a \otimes x_1 \otimes x_2 \otimes \cdots \otimes (x_ia'). 
\]
Since $R\subset V^{\otimes 2}$ generates the relations in $A$, it is clear that $d^2=0$. 
We refer to the chain complex $(K(A),d)$ as the \emph{Koszul complex of $A$}.  
\end{defn}

Let $(B(A),d_B)$ denote the bar complex of $A$.  
Since $R$ describes the relations in $A$, the natural inclusion $K_i(A) \into B_i(A)$ defines a morphism of complexes $(K(A),d) \to (B(A),d_B)$. 
The multiplication map 
$\mu \colon A\otimes A \to A$ defines morphisms of complexes from the complexes $(K(A),d)$ and $(B(A),d_B)$ to the complex $A$ concentrated in degree $0$ with trivial differential. 
However, while $\mu \colon B(A) \to A$ is always a quasi-isomorphism, $\mu \colon K(A) \to A$ may not be a quasi-isomorphism for an arbitrary quadratic algebra $A$. 
This leads to the following definition. 

\begin{defn}\label{def:Koszulity}
A quadratic algebra $A$ is called \emph{Koszul} if the morphism of complexes induced by multiplication $\mu \colon K(A) \to A$ is a quasi-isomorphism. 
\end{defn}

\begin{rem}\label{rem:Koszul_equivalent_defs}
We note that there are many equivalent definitions of the notion of a Koszul algebra. 
By \cite[Proposition 3.1]{vandenbergh}(where the notation $K'(A)$ is used for $K(A)$), 
Definition \ref{def:Koszulity} is equivalent to the ones in \cite{ppqa}, \cite{PoGalois} and \cite{Po} (see also \cite[Section 2.1]{bls}). 
\end{rem}

By \cite[Proposition 3.1]{vandenbergh}, the Koszul complex $(K(A),d)$ is a complex of free $A^e$-modules. 
Hence, if $A$ is a Koszul algebra, we can use $K(A)$ to compute Hochschild cohomology. 
In fact, since $K(A) \into B(A)$ is a quasi-isomorphism when $A$ is a Koszul algebra, 
we get the following result:

\begin{prop}\label{prop:HHviaKoszulcomplex}
Let $A$ be a Koszul algebra and $M$ be a graded $A$-bimodule. 
The canonical inclusion of chain complexes $K(A) \into B(A)$ induces an isomorphism between 
the graded Hochschild cohomology of $A$ with coefficients in $M$ and the cohomology of the cochain complex  $(\uHom_{A^{e}}(K(A),M),d^*)$ 
where $d^*$ denotes the differential induced by the differential $d$ of the chcain complex $K(A)$. \qed
\end{prop}


We now show that for a Koszul algebra an even simpler complex can be used to compute its Hochschild cohomology. 

\begin{defn}\label{def:simple_Koszul_cochain_complex}
Let $A$ be a Koszul algebra and $M$ be a graded $A$-bimodule. 
For every $n \ge 0$ and $s \in \Z$, 
let $\partial$ denote the $\F_2$-linear map 
\begin{align*}
\partial_n \colon \Hom_{\F_2}(K_n^n(A), M_{n+s}) \to \Hom_{\F_2}(K_{n+1}^{n+1}(A), M_{n+1+s})
\end{align*}
defined by setting 
\begin{align*}
\partial_n(f)(x_1\otimes \ldots \otimes x_{n+1}) =  x_1f(x_2\otimes  \ldots \otimes x_{n+1}) 
+  f(x_1 \otimes \ldots  \otimes x_{n})x_{n+1}. 
\end{align*}
It is easily verified that $\partial \circ \partial =0$. 
We refer to the cochain complex 
\begin{align}\label{eq:simple_Koszul_cochain_complex}
(\uHom_{\F_2}(K^*_*(A), M[s]), \partial)
\end{align}
as the \emph{simplified Koszul cochain complex}. 
\end{defn}

Let $A$ be a Koszul algebra and $M$ be a graded $A$-bimodule. 
For every $n$ and every integer $s$, isomorphism \eqref{eq:contractingiso_bar} descends to an isomorphism 
\begin{align}\label{eq:contractingiso}
\uHom_{A^e}(K_n(A), M[s])  \xto{\cong} \uHom_{\F_2}(K_n^n(A), M_{n+s}).  
\end{align}  
It is straight-forward to check that isomorphism \eqref{eq:contractingiso} yields an isomorphism of cochain complexes 
\begin{align}\label{eq:contractingiso_complex}
(\uHom_{A^e}(K_*(A), M[s]),d^*)  \xto{\cong} (\uHom_{\F_2}(K_*^*(A), M[s]),\partial).  
\end{align} 
By Proposition \ref{prop:HHviaKoszulcomplex}, 
the inclusion $K(A) \into B(A)$ induces an isomorphism 
between the Hochschild cohomology group $\HH^{n,s}(A,M)$ and the $n$th-cohomology of the left-hand side of \eqref{eq:contractingiso_complex}. 
Since \eqref{eq:contractingiso_complex} is an isomorphism of cochain complexes, 
this proves the following result:

\begin{prop}\label{prop:HHviaKoszulcomplex_simple}
Let $A$ be a Koszul algebra and $M$ be a graded $A$-bimodule. 
For every pair $(n,s)$, 
the composition of the restriction along the inclusion $K(A) \into B(A)$ and isomorphism \eqref{eq:contractingiso_complex}
defines an isomorphism 
\begin{align}\label{eq:diag_iso_of_HH}
\HH^{n,s}(A,M) \xto{\cong} H^n(\uHom_{\F_2}(K_*^*(A), M[s]),\partial)
\end{align}
from the graded Hochschild cohomology group $\HH^{n,s}(A,M)$ to the $n$th cohomology group of the cochain complex  
$(\uHom_{\F_2}(K^*_*(A), M[s]), \partial)$. \qed
\end{prop}

Theorem \ref{thm:KadeishvilicritST} and Proposition \ref{prop:HHviaKoszulcomplex_simple} 
then imply the following criterion for intrinsic formality: 

\begin{cor}
\label{cor:KadeishvilicritST_simplified}
Let $A$ be a Koszul algebra over $\F_2$. 
Assume that 
\[
H^n(\uHom_{\F_2}(K_*^*(A), A[2-n]),\partial) = 0 ~\text{for all}~n \ge 3.
\] 
Then $A$ is intrinsically formal. \qed
\end{cor}

Finally, to prove our main technical result in the next sections we will apply the following spectral sequence. 
We will provide a proof of the following proposition in appendix \ref{section:appendix}. 

\begin{prop}\label{prop:spectralsequence}
Let $A$ be a positively graded $\F_2$-algebra with $A_0=\F_2$. 
Let $\{A_{\alpha}\}_{\alpha}$ be a directed system of graded subalgebras such that $A$ is equal to the inductive limit $\varinjlim_{\alpha}A_{\alpha}$. 
Let $M$ be a graded $A^e$-module. 
For each integer $s$, there is a first quadrant spectral sequence 
\[
E_2^{p,q} = {\varprojlim_{\alpha}}^p \HH^{q,s}(A_{\alpha}, M) \Rightarrow  \HH^{p+q,s}(A,M)
\]
where ${\varprojlim^p_{\alpha}}$ denotes the $p$th right derived functor of the inverse limit functor. 
\end{prop}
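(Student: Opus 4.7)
The plan is to reduce the statement to the standard hypercohomology spectral sequence for the right derived functor of $\varprojlim$ applied to a cochain complex in the category of inverse systems of $\F_2$-vector spaces. The essential point is that the graded Hochschild cochain complex of $A$ with values in $M$ is naturally the inverse limit of the analogous complexes for the subalgebras $A_\alpha$, and moreover this inverse system of complexes is particularly nice (Mittag-Leffler with surjective transition maps in each degree), so the derived inverse limit coincides with the ordinary one.

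More concretely, first I would choose the bar resolution for each $A_\alpha$ and for $A$ itself. Using the standard adjunction, the graded Hochschild cochains are identified with
\[
C_\alpha^{n,s} := \uHom_{\F_2}(A_\alpha^{\otimes n}, M)^s, \qquad C^{n,s} := \uHom_{\F_2}(A^{\otimes n}, M)^s,
\]
where the superscript $s$ denotes the component of grading degree $s$, and the Hochschild differential is the usual alternating sum induced by the multiplication in $A_\alpha$ (resp. $A$). Since tensor products commute with filtered colimits of graded $\F_2$-vector spaces and $\uHom_{\F_2}$ converts colimits in its first variable to limits, the maps $A_\alpha \hookrightarrow A$ induce an isomorphism of cochain complexes
\[
C^{\bullet,s} \;\cong\; \varprojlim_\alpha C_\alpha^{\bullet,s}.
\]
Moreover, for any pair $\alpha \le \beta$ the inclusion $A_\alpha^{\otimes n} \hookrightarrow A_\beta^{\otimes n}$ is a split injection of graded $\F_2$-vector spaces (vector subspaces over a field admit complements in every graded piece), so the restriction map $C_\beta^{n,s} \twoheadrightarrow C_\alpha^{n,s}$ is surjective. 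Hence in each fixed bidegree $(n,s)$ the inverse system $\{C_\alpha^{n,s}\}_\alpha$ has surjective transition maps, is in particular Mittag-Leffler, and satisfies ${\varprojlim_\alpha}^p C_\alpha^{n,s} = 0$ for all $p \ge 1$. Consequently the natural map $\varprojlim_\alpha C_\alpha^{\bullet,s} \to R\varprojlim_\alpha C_\alpha^{\bullet,s}$ is a quasi-isomorphism, so $H^*(R\varprojlim_\alpha C_\alpha^{\bullet,s}) = H^*(C^{\bullet,s}) = \HH^{*,s}(A,M)$.

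The spectral sequence is then the standard hypercohomology spectral sequence of the right derived functor $R\varprojlim$ on the bounded below cochain complex $C_\alpha^{\bullet,s}$ in the abelian category of inverse systems. In the form that filters by the $\varprojlim$-direction it reads
\[
E_2^{p,q} = {\varprojlim_\alpha}^p H^q\bigl(C_\alpha^{\bullet,s}\bigr) \;\Longrightarrow\; H^{p+q}\bigl(R\varprojlim_\alpha C_\alpha^{\bullet,s}\bigr),
\]
and identifying $H^q(C_\alpha^{\bullet,s}) = \HH^{q,s}(A_\alpha,M)$ and the abutment with $\HH^{p+q,s}(A,M)$ gives the desired statement. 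The spectral sequence is first quadrant: $p\ge0$ because it is a right derived functor, and $q\ge0$ because $\HH^{q,s}$ is computed by a resolution concentrated in non-negative degrees.

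The only real step that requires care is the hypercohomology spectral sequence itself, since the category of inverse systems indexed by the directed set $\{\alpha\}$ needs enough injectives for the Cartan–Eilenberg machine; this is standard for inverse systems in a Grothendieck abelian category. The main bookkeeping obstacle is then to check that the differentials in the resulting double complex (one coming from the Hochschild differential, the other from the Cartan–Eilenberg resolution of the inverse system) yield precisely the spectral sequence displayed above with the correct internal grading $s$. Once this is set up, the Mittag-Leffler observation of the previous paragraph reduces $R\varprojlim$ to $\varprojlim$ on the chain level and the identification of the abutment is automatic.
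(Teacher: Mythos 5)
Your overall architecture is essentially the paper's own proof in different clothing: the double complex $\uHom_{\F_2}(\Sigma^p A_{\alpha}^{\otimes q},M)\cong\Pi^p\,\uHom_{\F_2}(A_{\alpha}^{\otimes q},M)$ used in the appendix is precisely a chain-level model for $R\varprojlim$ applied to the inverse system of Hochschild complexes, its first spectral sequence is your $E_2$-page, and its second spectral sequence is the identification of the abutment. The genuine gap in your write-up is in that identification. You deduce ${\varprojlim_{\alpha}}^p C_{\alpha}^{n,s}=0$ for $p\ge 1$ from ``the transition maps are surjective, hence the system is Mittag--Leffler''. That implication is only valid when the index poset admits a countable cofinal subset; for a general directed system -- and the proposition must cover index sets of arbitrary cardinality, since it is applied in Theorem \ref{main} to ordinal chains of subrings of an arbitrary Boolean ring -- levelwise surjectivity of the bonding maps does not force the higher derived limits to vanish. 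Already over $\omega_1$ there are inverse systems of abelian groups with surjective restriction maps and $\varprojlim^1\neq 0$ (nontrivial coherent sequences witness this for the system of finitely supported functions on initial segments). This is exactly why Definition \ref{ml-def} imposes the extra surjectivity condition (ML2) at limit ordinals and why the paper invokes Neeman's result (Proposition \ref{neeman}) for ordinal-indexed systems, rather than any ``surjective bonding maps imply acyclicity'' principle for directed systems.

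The vanishing you need is nevertheless true, but for a different reason: in each bidegree your system is $\uHom_{\F_2}(A_{\alpha}^{\otimes n},M[s])$, i.e.\ the exact functor $\uHom_{\F_2}(-,M[s])$ applied to a filtered direct system of graded $\F_2$-vector spaces (every graded vector space is injective). Applying this functor to Jensen's complex $\Sigma_*A_{\alpha}^{\otimes n}$, which is acyclic in positive degrees with $H_0=\varinjlim_{\alpha}A_{\alpha}^{\otimes n}=A^{\otimes n}$, produces the complex $\Pi^*\,\uHom_{\F_2}(A_{\alpha}^{\otimes n},M[s])$ computing the derived limits; exactness then gives ${\varprojlim_{\alpha}}^p\uHom_{\F_2}(A_{\alpha}^{\otimes n},M[s])=0$ for $p>0$ and identifies the ordinary limit with $\uHom_{\F_2}(A^{\otimes n},M[s])$. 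This is precisely the degeneration of the second spectral sequence ${E''}$ in the paper's proof. With this substitution your argument goes through; as written, the Mittag--Leffler step is not a valid justification.
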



\section{Boolean and dual graded algebras}\label{section:def_Boolean_and_dual}

In this section we introduce the two types of graded algebras for which we will compute the Hochschild cohomology of their connected sums 
in the next sections. 
For more details and background on Boolean rings we refer to appendix \ref{app:Boolean}. 

\begin{defn}\label{def:boolean} We say that a ring $B$ is {\it Boolean} if $x^2=x$ for every $x\in B$. 
For every Boolean ring $B$, let $B_*=\bigoplus_{n\ge 0}B_n$ denote the graded ring 
such that $B_0=\mathbb F_2$ and for every $n\ge 1$ we set $B_n=B$. 
For every pair of positive integers $m,n$ the multiplication $B_m\times B_n\to B_{m+n}$ is the multiplication in the ring $B=B_m=B_n=B_{m+n}$. 
We call $B_*$ the {\it associated Boolean graded algebra}. 
\end{defn}

For the next lemma, we recall that the polynomial ring $A=\F_2[x]$ is a quadratic algebra with $A_1= \F_2 \langle x \rangle$, the one-dimensional $\F_2$-vector space generated by $x$, and trivial relations.  
We also recall from Corollary \ref{atomic} that every finite Boolean ring has an orthogonal basis, 
i.e., there are elements $x_1,x_2,\ldots,x_n$ in $B$ which form a basis of $B$ as a vector space over $\F_2$ and $x_ix_j=0$ for $i\ne j$. 

\begin{lemma}\label{lemma:boolean}
Let $B$ be a finite Boolean ring, and let $x_1,x_2,\ldots,x_n$ be an orthogonal basis of $B$. 
Then the associated Boolean graded algebra $B_*$ is a quadratic algebra and is isomorphic to the connected sum of the $\F_2[x_j]$, i.e., 
\begin{align*}
B_* \cong \F_2[x_1] \uoplus \cdots \uoplus \F_2[x_n].
\end{align*}
Moreover, $B_*$ is a Koszul algebra.
\end{lemma}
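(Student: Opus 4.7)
The plan is to establish three things in turn: that $B_*$ is quadratic, that it agrees with the claimed connected sum, and that it is Koszul. The first two will be handled together by pinning down the generators and relations of $B_*$ via the orthogonal idempotent structure. Koszulity will then follow from the Koszulity of $\F_2[x]$ combined with the standard fact that connected sums preserve Koszulity.

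Set $V := B_1 = B$. Since $B$ is Boolean, each $x_i$ is idempotent, and by hypothesis $x_i x_j = 0$ for $i\neq j$. The degree-two multiplication $V\otimes V \to B_2 = B$ is by definition the ring product of $B$, so $x_i\otimes x_j$ maps to $\delta_{ij}x_i$, and its kernel is
\[
R := \mathrm{span}_{\F_2}\{x_i\otimes x_j : i\neq j\} \subset V\otimes V.
\]
I would then verify $B_* \cong T(V)/(R)$. In each degree $n$ the canonical map $T(V)_n \to B_n = B$ is surjective because $x_i^{\otimes n}\mapsto x_i^n = x_i$ hits the basis $\{x_i\}$ of $B$. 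Conversely, any tensor $x_{i_1}\otimes\cdots\otimes x_{i_n}$ with a non-constant index sequence lies in $(R)_n$: picking $k$ with $i_k\neq i_{k+1}$, we have $x_{i_k}\otimes x_{i_{k+1}}\in R$ and hence $x_{i_1}\otimes\cdots\otimes x_{i_n}\in V^{\otimes k-1}\otimes R\otimes V^{\otimes n-k-1}\subset (R)_n$. Consequently $T(V)_n/(R)_n$ is spanned by the $n$ constant tensors $\{x_i^{\otimes n}\}_{i=1}^n$ and maps isomorphically onto $B_n$. This yields the quadratic presentation $B_* = T(V)/(R)$.

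For the isomorphism with the connected sum, each $\F_2[x_j] = T(\F_2\langle x_j\rangle)$ is free on a single degree-one generator, so it has no quadratic relations; by the definition of $\uoplus$,
\[
\F_2[x_1]\uoplus\cdots\uoplus\F_2[x_n] \cong T\Bigl(\bigoplus_j \F_2\langle x_j\rangle\Bigr)\Big/\Bigl(\bigoplus_{i\neq j}\F_2\langle x_i\rangle\otimes\F_2\langle x_j\rangle\Bigr).
\]
The generator space is $V$ and the relation space is $R$, so this presentation coincides with the one obtained above for $B_*$.

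For Koszulity, $\F_2[x]$ is Koszul because $K_i^i(\F_2[x])$ vanishes for $i\geq 2$, so its Koszul complex reduces to the familiar free bimodule resolution. I would then invoke the classical result of Polishchuk and Positselski \cite[Chapter 3]{ppqa} that the connected sum of Koszul quadratic algebras is Koszul. The main obstacle I anticipate is reliance on this last result; should a self-contained argument be preferred, a combinatorial description of $K_n^n(B_*)$ as the span of reduced words $x_{i_1}\otimes\cdots\otimes x_{i_n}$ with $i_k\neq i_{k+1}$, combined with a contracting homotopy obtained by splitting off the last letter, would yield a direct proof of exactness of the Koszul complex.
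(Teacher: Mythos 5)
Your proposal is correct and follows essentially the same route as the paper: you identify $B_*$ with the connected sum by matching quadratic presentations (which the paper treats as immediate from Definition \ref{def:boolean}), note that $\F_2[x]$ is Koszul because $K_i^i(\F_2[x])=0$ for $i\ge 2$, and then cite \cite[Chapter 3]{ppqa} for the fact that connected sums of Koszul algebras are Koszul, exactly as the paper does. Your extra verification of the presentation $B_*\cong T(V)/(R)$ is a harmless elaboration of the paper's one-line appeal to the definition, and the alternative self-contained sketch is not needed for the argument to be complete.
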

\begin{proof}
That $B_*$ is isomorphic to the connected sum of the quadratic algebras $\F_2[x_j]$ follows immediately from Definition \ref{def:boolean}. 
By \cite[Chapter 3, Corollary 1.2 on page 58]{ppqa}, the connected sum of Koszul algebras is Koszul. 
Hence, in order to show that $B_*$ is Koszul, it suffices to show that $\F_2[x]$ is a Koszul algebra. 
The latter follows from \cite[Example on page 20]{ppqa} and Remark \ref{rem:Koszul_equivalent_defs} since $\F_2[x]$ is the symmetric algebra in one generator. 
Thus, $\F_2[x]$ is Koszul and hence so is $B_*$. 
\end{proof}

\begin{prop}\label{prop:Booleanisquadratic_only_Boolean}
Let $B$ be a Boolean ring and let $B_*$ be its associated Boolean graded algebra. 
Then $B_*$ is a Koszul algebra. 
\end{prop}
\begin{proof} 
This follows from the fact that every Boolean ring is the inductive limit of finite Boolean rings. 
Indeed, every ring is the inductive limit of its finitely generated subrings. 
Every subring of a Boolean ring is Boolean, and from Stone's representation theorem it is also clear that every finitely generated Boolean ring is finite. 
Also note that the functor $B\mapsto B_*$ of Definition \ref{def:boolean} maps injective morphisms to injective morphisms, so for every Boolean ring $B$ the graded $\mathbb F_2$-algebra $B_*$ is the inductive limit of graded $\mathbb F_2$-algebras of the form $A_*$ where $A \subseteq B$ is a finite Boolean ring. 	
By Lemma \ref{lemma:boolean}, it now suffices to observe that the property of being a Koszul algebra is preserved under passing to inductive limits in the category of graded algebras over a field.  
This was shown by Bruns and Gubeladze in \cite[Lemma 1.1, page 48]{BG}. 
\end{proof}


As a generalisation of the algebra of dual numbers $k[\varepsilon]/(\varepsilon^2)$, we refer to the following type of graded algebras as {\it dual algebras} (even though we do not consider them as the dual of another algebra). 

\begin{defn}\label{defn:dual}
Let $V$ be a vector space over $\F_2$. 
We call the non-negatively graded $\F_2$-algebra $V_*=\bigoplus_{j\ge 0}V_j$  with $V_0=\F_2$, $V_1=V$ and $V_j=0$ for $j\geq2$ the {\it associated dual algebra}. A dual algebra $V_*$ is quadratic with relations $R=V_1\otimes_{\F_2} V_1$. 
\end{defn}

\begin{lemma}\label{lemma:dualformal}
For every $\F_2$-vector space $V$, the dual algebra $V_*$ is Koszul. 
For $k+\mm\ge 2$, we have $\HH^{k,\mm}(V_*)=0$. 
\end{lemma}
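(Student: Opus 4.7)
The plan is to verify Koszulity of $V_*$ and then read off the vanishing of $\HH^{k,s}(V_*,V_*)$ directly from the very simple shape of the Koszul complex.

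\textbf{Koszulity.} Since $V_*=T(V)/(V\otimes V)$ has relations filling all of $V\otimes V$, the Koszul dual of $V_*$ is the free tensor algebra $T(V^*)$, which is trivially Koszul; by Koszul duality (see \cite{ppqa}) it follows that $V_*$ is Koszul. Equivalently, one can pick a basis $\{x_\alpha\}_{\alpha\in I}$ of $V$ and write $V_*$ as the connected sum
\[
V_*\cong\uoplus_{\alpha\in I}\F_2[x_\alpha]/(x_\alpha^2),
\]
and then combine Koszulity of each exterior factor $\F_2[x]/(x^2)$ (the Koszul dual of $\F_2[x]$, whose Koszulity was noted before Lemma \ref{lemma:boolean}) with the stability of Koszulity under connected sums, exactly as in the proof of Lemma \ref{lemma:boolean}.

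\textbf{Degree counting.} Because $R=V\otimes V$ is all of $V^{\otimes 2}$, the subspaces $K_n^n(V_*)$ from Definition \ref{def:Koszulcomplex} equal $V^{\otimes n}$ for every $n\ge 0$, viewed as $\F_2$-vector spaces concentrated in internal degree $n$. By Proposition \ref{prop:HHviaKoszulcomplex} and Remark \ref{rem:HHviaKoszulcomplex}, the bigraded group $\HH^{k,s}(V_*,V_*)$ is the $k$-th cohomology of the cochain complex whose $n$-th term is
\[
\uHom_{\F_2}(V^{\otimes n},V_*[s])=\Hom_{\F_2}\bigl(V^{\otimes n},(V_*)_{n+s}\bigr).
\]
But $V_*$ is concentrated in internal degrees $0$ and $1$, so $(V_*)_{n+s}=0$ whenever $n+s\ge 2$. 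Specialising to $n=k$, the entire cochain group in cohomological degree $k$ vanishes as soon as $k+s\ge 2$, and its $k$-th cohomology is therefore automatically zero.

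\textbf{Main obstacle.} There is no deep point in the argument; the main care required is to match the internal and cohomological gradings correctly, so that the vanishing $(V_*)_{n+s}=0$ for $n+s\ge 2$ can be applied at the index $n=k$. Both the Koszulity input and the degree bookkeeping are standard.
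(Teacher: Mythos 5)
Your degree-counting argument for the vanishing of $\HH^{k,s}(V_*,V_*)$ when $k+s\ge 2$ is exactly the paper's: since $R=V\otimes V$ gives $K_k^k(V_*)=V^{\otimes k}$, concentrated in internal degree $k$, the graded cochain group $\uHom_{\F_2}(K_k^k(V_*),V_*[s])=\Hom_{\F_2}(V^{\otimes k},(V_*)_{k+s})$ is already zero because $(V_*)_{k+s}=0$ for $k+s\ge 2$. That half is correct and complete.

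The Koszulity half is where you diverge from the paper, and as written it has a gap when $\dim_{\F_2}V$ is infinite --- a case the lemma explicitly claims (``for every $\F_2$-vector space $V$'') and which is needed in the application, since $V=\ker\pi_1\subseteq H^1(G,\F_2)$ need not be finite-dimensional. The duality statement ``$A$ is Koszul iff $A^!$ is Koszul'' is a theorem about locally finite quadratic algebras: the quadratic dual is built from $V^*$ and the annihilator of $R$ inside $V^*\otimes V^*\cong (V\otimes V)^*$, an identification that fails in infinite dimension, so you cannot invoke it here without first reducing to the locally finite case. Your alternative route, writing $V_*\cong\uoplus_{\alpha\in I}\F_2[x_\alpha]/(x_\alpha^2)$ and citing the connected-sum stability used in Lemma \ref{lemma:boolean}, only covers finitely many summands (that is how \cite[Chapter 3, Corollary 1.2]{ppqa} is used there); for infinite $I$ you must add the colimit step: $V_*$ is the filtered union of the $W_*$ over finite-dimensional subspaces $W\subseteq V$, and Koszulity passes to inductive limits by Bruns--Gubeladze \cite{BG}, exactly as in the proof of Proposition \ref{prop:Booleanisquadratic}. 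With that one sentence added, your argument is correct. For comparison, the paper's own proof of Koszulity is shorter and dimension-independent: because $R=V\otimes V$, the Koszul complex of $V_*$ coincides with its (reduced) bar complex, so $\mu\colon K(V_*)\to V_*$ is automatically a quasi-isomorphism.
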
 
\begin{proof}
That $V_*$ is Koszul follows from the fact that the inclusion map of $K(V_*)$ into the bar complex is the identity. 
The vanishing of $\HH^{k,\mm}(V_*)$ for $k+\mm\ge 2$ follows from the fact that there are no nontrivial cochains in the Koszul cochain complex for $k+\mm \ge 2$, 
since $V_j=0$ for $j\ge 2$. 
\end{proof}

\begin{prop}\label{prop:Booleanisquadratic}
Let $V$ be an $\F_2$-vector space, let  $V_*$ be its associated dual algebra, $B$ be a Boolean ring and let $B_*$ be its associated Boolean graded algebra. 
Then $V_*\uoplus B_*$ is a Koszul algebra. 
\end{prop}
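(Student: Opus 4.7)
The plan is to deduce Proposition \ref{prop:Booleanisquadratic} directly from the Koszulity results already established for the individual building blocks, together with the stability of Koszulity under connected sums. The statement combines two sorts of algebras that have already been shown to be Koszul in isolation, so the natural approach is to exhibit $V_*\uoplus B_*$ as an iterated connected sum of such Koszul algebras and invoke a closure property.

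First I would unpack the structure of $B_*$. In the finite setting (consistent with the section heading ``the finite case''), Lemma \ref{lemma:boolean} identifies $B_*$ with a connected sum
\[
B_* \;\cong\; \F_2[x_1] \uoplus \cdots \uoplus \F_2[x_n]
\]
indexed by an orthonormal basis $x_1,\dots,x_n$ of $B$, and each $\F_2[x_j]$ is a classical example of a Koszul quadratic algebra (its Koszul complex $K_i^i(\F_2[x_j])$ vanishes for $i\geq 2$, so the comparison map to the bar complex is easily seen to induce an isomorphism in homology). Combining this with Lemma \ref{lemma:dualformal}, which asserts that the dual algebra $V_*$ is Koszul, gives
\[
V_* \uoplus B_* \;\cong\; V_* \uoplus \F_2[x_1] \uoplus \cdots \uoplus \F_2[x_n],
\]
an iterated connected sum of Koszul algebras. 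The conclusion then follows from the general fact that a connected sum of Koszul algebras is Koszul, stated as \cite[Chapter 3, Corollary 1.2 on page 58]{ppqa}.

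The quadratic nature of $V_* \uoplus B_*$ is automatic from the construction of the connected sum: the generators all sit in degree one, and the relations are the direct sum of the relations in each summand together with the bilateral vanishing $A_+B_+ = B_+A_+ = 0$ of the mixed products, which are all of degree two.

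I do not anticipate a real obstacle in the finite case, since the argument is essentially the assembly of Lemma \ref{lemma:boolean}, Lemma \ref{lemma:dualformal}, and the cited closure property under connected sum. The only mildly subtle point is that the proposition is stated for a general Boolean ring $B$, whereas Lemma \ref{lemma:boolean} gives an orthonormal-basis decomposition only in the finite case; if one wished to extend the argument to an arbitrary Boolean ring, the decomposition would turn into an infinite connected sum and one would need a colimit argument on Koszul complexes, which is presumably the content of the subsequent section on the infinite case.
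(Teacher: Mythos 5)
Your finite-case argument coincides with the paper's: $V_*$ is Koszul by Lemma \ref{lemma:dualformal}, $B_*\cong \F_2[x_1]\uoplus\cdots\uoplus\F_2[x_n]$ is Koszul by Lemma \ref{lemma:boolean}, and connected sums of Koszul algebras are Koszul by \cite[Chapter 3, Corollary 1.2]{ppqa}. But the proposition is stated for an \emph{arbitrary} Boolean ring $B$, and your proposal leaves that case genuinely open: you only gesture at ``a colimit argument on Koszul complexes'' and guess that it is ``presumably the content of the subsequent section on the infinite case.'' That guess is wrong, and relying on it would be circular: Section \ref{section:Boolean_infinite} does not prove Koszulity of $V_*\uoplus B_*$ --- it \emph{uses} it, since the identification of the cohomology of the Koszul cochain complexes $\uHom_{\F_2}(K^*_*(V_*\uoplus A_*),\,\cdot\,)$ with graded Hochschild cohomology (Proposition \ref{prop:HHviaKoszulcomplex} and Remark \ref{rem:HHviaKoszulcomplex}) is only available once Koszulity is known, for subrings $A\subseteq B$ of arbitrary cardinality. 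So the general case must be settled inside this proposition's proof.

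The missing step is supplied in the paper as follows, and none of it is automatic from what you wrote: (i) every finitely generated Boolean ring is finite (by Stone's representation theorem), and every subring of a Boolean ring is Boolean, so $B$ is the filtered union of its finite Boolean subrings $A$; (ii) the functor $A\mapsto A_*$ of Definition \ref{def:boolean} sends injections to injections, so $B_*=\varinjlim A_*$ as graded $\F_2$-algebras, with $A$ ranging over finite Boolean subrings; (iii) Koszulity is preserved under filtered inductive limits of graded algebras over a field, which is not a formal triviality but a result of Bruns and Gubeladze \cite[Lemma 1.1, page 48]{BG}. With (i)--(iii) one concludes that $B_*$ is Koszul for arbitrary $B$, and then your connected-sum argument finishes the proof. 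Without some version of this limit argument (or an explicit citation replacing it), your proof establishes the proposition only for finite $B$, which is strictly weaker than the statement and insufficient for its later applications (e.g.\ Theorem \ref{thm:intro_Koszul} and the setup of Section \ref{section:Boolean_infinite}).
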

\begin{proof} 
By Lemma \ref{lemma:dualformal} we know that $V_*$ is Koszul, and by Proposition \ref{prop:Booleanisquadratic_only_Boolean} we know that $B_*$ is Koszul. 
By \cite[Chapter 3, Corollary 1.2 on page 58]{ppqa} the connected sum of Koszul algebras is Koszul. 
This proves the assertion. 
\end{proof} 


\section{Kadeishvili's vanishing for connected sums of dual and Boolean graded algebras - the finite case}\label{section:Boolean_finite}

In this section we are going to prove that connected sums of dual algebras 
and the Boolean graded algebras associated to a Boolean ring have vanishing Hochschild cohomology 
when we restrict to a finite subring.   
In Section \ref{section:Boolean_infinite} we extend the computation to the case of arbitrary Boolean subrings.  

We fix an $\F_2$-vector space $V$, and let $V_*$ be its associated dual algebra.  
We also fix a Boolean ring $B$ and a \emph{finite} Boolean subring $A\subset B$. 
Let $A_*$ and $B_*$ be the associated Boolean graded algebras. 
Our next goal is to compute the groups 
$\HH^{k,\mm}(V_*\uoplus A_*,V_* \uoplus B_*)$ for $\mm<0$.

By Propositions \ref{prop:HHviaKoszulcomplex_simple} and \ref{prop:Booleanisquadratic}, 
we can do this by using the simplified Koszul cochain complex \eqref{eq:simple_Koszul_cochain_complex} of $V_* \uoplus A_*$ of Definition \ref{def:simple_Koszul_cochain_complex}, i.e., we consider the cochain complex 
\begin{align*}
(\uHom_{\F_2}(K^*_*(V_* \uoplus A_*), (V_* \uoplus B_*)[\mm]), \partial). 
\end{align*}  
First we determine the $\F_2$-vector spaces $K^k_k(V_*\uoplus A_*)$. 
This will require some preparation. 

\begin{defn}\label{def:admissiblesequences}
Let $I$ denote a basis of $V$. 
Let $x_1,x_2,\ldots,x_n$ be the orthonormal basis of $A$, see Corollary \ref{atomic}. 
Let $J$ denote the set $\{x_1,x_2,\ldots,x_n\}$. 
A sequence $t_1,t_2,\ldots,t_k\in I\cup J$ is {\it admissible} if for every $j=1,2,\ldots,k-1$ such that $t_j,t_{j+1}\in J$ we have $t_j\neq t_{j+1}$. 
We say that an admissible sequence $t_1,t_2,\ldots,t_k\in I\cup J$ is {\it stable} if $t_1,t_k\in J$ and $t_1=t_k$, and it is {\it unstable} otherwise. 
\end{defn}


\begin{notn} For every sequence $\mathbf t=t_1,t_2,\ldots,t_k\in I\cup J$, 
let the same symbol $\mathbf t$ denote the element $t_1\otimes t_2\otimes\cdots\otimes t_k\in K^k_k(V_* \uoplus A_*)$ by slight abuse of notation. 
\end{notn}

\begin{lemma}\label{koszul_explicit_2} Write $Q_*= V_* \uoplus A_*$. The subspace $K^k_k(Q)\subset Q_1^{\otimes k}$ is spanned by the basis consisting of the vectors $\mathbf t$, where $\mathbf t$ is any admissible sequence. 
\end{lemma}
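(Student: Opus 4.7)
The strategy is to first pin down $R_Q \subset Q_1^{\otimes 2}$ explicitly in terms of the natural basis $I \cup J$ of $Q_1 = V \oplus A$, observe that $R_Q$ is in fact spanned by a subset of the standard basis of $Q_1^{\otimes 2}$, and then read off $K^k_k(Q)$ from the intersection definition.

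First I would compute $R_Q$. By the connected-sum formula,
\[
R_Q = R_{V_*} \oplus R_{A_*} \oplus (V \otimes A) \oplus (A \otimes V).
\]
Since $(V_*)_j = 0$ for $j \geq 2$, we have $R_{V_*} = V \otimes V$. For $A_*$, Lemma \ref{lemma:boolean} identifies it with the iterated connected sum $\F_2[x_1] \uoplus \cdots \uoplus \F_2[x_n]$ of polynomial rings, each of which has trivial relations; iterating the connected-sum formula gives $R_{A_*} = \bigoplus_{i \neq j} \F_2 (x_i \otimes x_j)$. Putting these pieces together, a basis element $e_1 \otimes e_2$ with $e_1, e_2 \in I \cup J$ belongs to $R_Q$ \emph{unless} $e_1 = e_2 \in J$; equivalently, $R_Q$ is the span of $(I \cup J)^2 \setminus \{(x_i, x_i) : x_i \in J\}$.

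Next I would use this observation. Since $R_Q$ is spanned by a subset of the standard basis of $Q_1^{\otimes 2}$, the inclusion $R_Q \hookrightarrow Q_1^{\otimes 2}$ tensored (over the field $\F_2$) with identities shows that for each $j = 0, \ldots, k-2$ the subspace
\[
W_j := Q_1^{\otimes j} \otimes R_Q \otimes Q_1^{\otimes k-j-2} \subset Q_1^{\otimes k}
\]
is spanned by the set $S_j$ of basis tensors $\mathbf t = t_1 \otimes \cdots \otimes t_k$ with $t_i \in I \cup J$ such that $(t_{j+1}, t_{j+2})$ is not of the form $(x_i, x_i)$.

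Finally, $K^k_k(Q) = \bigcap_{j=0}^{k-2} W_j$. Because each $W_j$ is the span of a subset of the common basis $(I \cup J)^k$ of $Q_1^{\otimes k}$, unique representation in this basis forces the intersection to be spanned by $\bigcap_j S_j$. A tensor $\mathbf t$ lies in every $S_j$ precisely when no adjacent pair $(t_j, t_{j+1})$ has the form $(x_i, x_i)$, which is exactly the admissibility condition of Definition \ref{def:admissiblesequences}. This yields the claimed basis. The only subtle point — and the one I would state carefully — is the elementary linear-algebra fact that the intersection of subspaces each spanned by a subset of a common basis is spanned by the intersection of those subsets; the rest is bookkeeping with the connected-sum presentation.
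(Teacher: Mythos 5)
Your proposal is correct and follows essentially the same route as the paper: identify $R_Q$ as the span of all standard basis tensors of $Q_1^{\otimes 2}$ except the diagonal ones $x_i\otimes x_i$ (the paper phrases this via the complement $T_Q=\mathrm{span}\{x_i\otimes x_i\}$ and the decomposition $Q_1^{\otimes 2}=R_Q\oplus T_Q$), deduce that each factor $Q_1^{\otimes j}\otimes R_Q\otimes Q_1^{\otimes k-j-2}$ is spanned by the basis tensors satisfying the corresponding local condition, and conclude by the standard fact that intersecting subspaces spanned by subsets of a common basis yields the span of the intersection of those subsets. The bookkeeping (connected-sum description of $R_Q$, admissibility matching the local conditions) is accurate, so there is no gap.
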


\begin{proof} 
Let $R_Q \subseteq Q_1 \otimes Q_1$ denote the relations in the tensor algebra defining the quadratic algebra $Q$. 
Note that the vectors $\mathbf t$, where $\mathbf t$ is any sequence in $I\cup J$, are linearly independent, so any subset of them will form a basis of their span. Therefore it will be sufficient to prove that for every index $j=1,2,\ldots,k-1$ the subspace:
$$Q_1^{\otimes{j-1}}\otimes R_Q\otimes Q_1^{\otimes{k-j-1}}\subset Q_1^{\otimes k}$$
is spanned by the vectors $\mathbf t$, where $\mathbf t=t_1,t_2,\ldots,t_k\in I\cup J$ satisfies the condition $t_j\neq t_{j+1}$ if $t_j,t_{j+1}\in J$. 
Recall that the relations in $A_*$ are generated by the vectors $x_i \otimes x_j$ for $i\neq j$. 
Let $T_Q\subset Q_1^{\otimes2}$ be the subspace spanned by the vectors
$x_j\otimes x_j$, where $j=1,2,\ldots,n$. Then $Q_1^{\otimes2}=R_Q\oplus T_Q$, and hence
\begin{equation}\label{0.2.1_Q}
Q_1^{\otimes k} = \left(Q_1^{\otimes{j-1}}\otimes R_Q\otimes Q_1^{\otimes{k-j-1}} \right) \oplus \left(Q_1^{\otimes{j-1}}\otimes T_Q\otimes Q_1^{\otimes{k-j-1}}\right).
\end{equation}
Now if
$$\underline t=\sum_{\mathbf t}a_{\mathbf t}\mathbf t=
\left(\sum_{\{t_j,t_{j+1}\}\nsubseteq J }
a_{\mathbf t}\mathbf t+\sum_{\substack{ t_j,t_{j+1}\in J \\ t_j\neq t_{j+1} } }
a_{\mathbf t}\mathbf t\right)+
\sum_{\substack{ t_j,t_{j+1}\in J \\ t_j=t_{j+1} } }a_{\mathbf t}\mathbf t
\quad(a_{\mathbf t}\in\mathbb F_2)$$
lies in $Q_1^{\otimes{j-1}}\otimes R_Q\otimes Q_1^{\otimes{k-j-1}}$, then the sum outside of the parentheses must be zero, since it lies in the second summand of the  decomposition in (\ref{0.2.1_Q}), while the sum within the parentheses lies in the first summand of the  decomposition in (\ref{0.2.1_Q}). 
The claim is now clear. 
\end{proof}


We are going to use the following fact about Boolean rings. 
\begin{lemma}\label{ideals} 
Let $x,y\in B$. Let $(x)$, $(y)$ and $(x,y)$ denote the ideals in $B$ generated by $x$, $y$, $x$ and $y$, respectively. 
Then the following assertions hold:
\begin{enumerate}
	\item[$(i)$] for every $z\in B$, we have $z\in (x,y)$ if and only if $(1+x+y+xy)z=0$; 
	\item[$(ii)$] if $xy=0$ then $(x,y)$ is the direct sum of $(x)$ and $(y)$.  
\end{enumerate}
In particular, for every $x,z\in B$ we have $z\in (x)$ if and only if $(1+x)z=0$. 
\end{lemma}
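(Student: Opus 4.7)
The proof will be a direct computation using the two key structural facts about Boolean rings: they have characteristic $2$ (since $(x+x)^2=x+x$ forces $2x=0$) and they are commutative (since $(x+y)^2=x+y$ forces $xy+yx=0$). Both of these together with $x^2=x$ will make all of the calculations collapse.

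For part (i), the forward implication is immediate: if $z=ax+by$, then expanding $(1+x+y+xy)(ax+by)$ and repeatedly applying $x^2=x$ and $y^2=y$ gives four copies of $ax$, four copies of $by$, four copies of $axy$ and four copies of $bxy$, all of which vanish in characteristic $2$. For the converse, the assumption $(1+x+y+xy)z=0$ is simply the identity
\[
z = xz + yz + xyz,
\]
which already exhibits $z$ as an element of $(x,y)$; concretely, one can take $z = \bigl(z(1+y)\bigr)\cdot x + z \cdot y$. Thus part (i) amounts to rewriting a single algebraic identity in two different ways.

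For part (ii), the equality $(x,y)=(x)+(y)$ is tautological, so the content is that $(x)\cap (y)=0$ when $xy=0$. If $z=ax=by$, then multiplying $z=ax$ on the right by $x$ gives $zx=ax^2=ax=z$; substituting $z=by$ now yields $z = zx = (by)x = b(xy) = 0$. So the intersection is trivial, and the sum is direct.

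Finally, the \emph{in particular} statement is the special case $y=0$ of part (i): then $xy=0$ and $1+x+y+xy = 1+x$, while $(x,y)=(x)$, giving $z\in(x) \iff (1+x)z=0$. I do not anticipate any genuine obstacle here — the only thing one must be careful about is tracking signs and idempotency in the expansion in (i); once that is laid out, both parts and the corollary fall out in a few lines.
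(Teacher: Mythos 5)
Your proof is correct and follows essentially the same direct computation as the paper: expanding $(1+x+y+xy)(ax+by)$ and using idempotency together with characteristic $2$ for one direction of (i), reading $(1+x+y+xy)z=0$ as $z=z(x+y+xy)\in(x,y)$ for the other, and using $z=ax=by$ with $xy=0$ and idempotency to show $(x)\cap(y)=0$ in (ii). The only slip is cosmetic: the expansion in (i) yields each of $ax$, $by$, $axy$, $bxy$ exactly twice (eight terms in total), not four times, but since the multiplicities are even either way the characteristic-$2$ cancellation you invoke is unaffected.
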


\begin{proof} If $(1+x+y+xy)z=0$ then $z=z(x+y+xy)\in (x,y)$. If $z\in (x,y)$ then there are $a,b\in B$ such that $z=ax+by$, so
$$(1+x+y+xy)z=(ax+by)+(ax+bxy)+(axy+by)+(axy+bxy)=0,$$
so claim $(i)$ is true. 
Now assume that $xy=0$. Note that $(x,y)=(x)+(y)$, so we only need to show that $(x)\cap(y)=0$. If $z\in(x)\cap(y)$ then there are $a,b\in B$ such that $z=ax=by$. Therefore
$$z=z^2=axby=abxy=0,$$
so claim $(ii)$ holds, too. 
\end{proof}

Now we assume that $j\ge 1$ and that $f \colon  K^k_k(V_*\uoplus A_*)\to B_j$ is a cocycle in the simplified Koszul cochain complex $(\uHom_{\F_2}(K^*_*(V_*\uoplus A_*), B_*[j-k]), \partial)$ of Definition \ref{def:simple_Koszul_cochain_complex}.  
For the sake of simple notation, we will identify $B_j$ with $B_1$ in all that follows. 
Let $p \colon V_1\oplus B_1\to B_1$ be the unique linear map which is zero on $V_1$ and the identity on $B_1$.
\begin{lemma}\label{gereon_relation} 
For every admissible sequence $\mathbf t=t_1,t_2,\ldots,t_k\in I\cup J$ we have:
$$f(\mathbf t)\in (p(t_1),p(t_k)).$$
\end{lemma}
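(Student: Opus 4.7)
The plan is to deduce the claim directly from the cocycle condition, without induction, by a case analysis on where the endpoints $t_1, t_k$ lie. First I would record that the inclusion $A \hookrightarrow B$ makes $V_* \uoplus B_*$ into a bimodule over $Q := V_* \uoplus A_*$ via the induced ring homomorphism $Q \to V_* \uoplus B_*$, and that $B_* \subseteq V_* \uoplus B_*$ is a $Q$-subbimodule: elements of $V_1$ act trivially on $B_*$ because of the connected-sum relation $V_+ B_+ = 0$, while each $x_i \in A_1 \subseteq B_1$ acts by multiplication in $B$. Consequently, for an admissible sequence $(s_1, \ldots, s_{k+1})$ of length $k+1$, the cocycle identity $\partial f(s_1,\ldots,s_{k+1})=0$ reads
\[
s_1 \cdot f(s_2, \ldots, s_{k+1}) + f(s_1, \ldots, s_k) \cdot s_{k+1} = 0 \quad \text{in } B,
\]
with the $V_1$-terms annihilating. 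The strategy is to apply this identity to the various admissible extensions $(\mathbf{t}, t_{k+1})$ and $(t_0, \mathbf{t})$ of the given sequence $\mathbf{t}$ by a single element.

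When $t_1 \in I$ the term $t_1 \cdot f(\ldots)$ vanishes, so the cocycle reduces to $f(\mathbf{t}) \cdot t_{k+1} = 0$ for every admissible extension. Taking $t_{k+1} = x_j$ and summing the resulting identities $x_j f(\mathbf{t}) = 0$ over those $j$ for which $(t_k, x_j)$ is admissible, together with the Boolean identity $\sum_i x_i = 1$, yields either $f(\mathbf{t}) = 0$ (if $t_k \in I$) or $(1 + t_k) f(\mathbf{t}) = 0$ (if $t_k \in J$). By Lemma \ref{ideals} this places $f(\mathbf{t})$ in $(p(t_k)) \subseteq (p(t_1), p(t_k))$. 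The case $t_k \in I$ is handled by the symmetric argument with left extensions $(t_0, \mathbf{t})$.

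The core case is $t_1 = x_m$, $t_k = x_l$, both in $J$. Applying the cocycle identity to $(\mathbf{t}, x_j)$ with $j \neq l$ gives
\[
x_m \, f(t_2, \ldots, t_k, x_j) + x_j \, f(\mathbf{t}) = 0.
\]
The key move is to multiply through by $x_j$: using $x_j^2 = x_j$ and the orthogonality $x_m x_j = 0$ for $j \neq m$, this collapses to $x_j f(\mathbf{t}) = 0$ for every $j \notin \{l, m\}$. Summing over these indices yields $(1 + x_l + x_m) f(\mathbf{t}) = 0$ in the unstable case $m \neq l$ and $(1 + x_m) f(\mathbf{t}) = 0$ in the stable case $m = l$. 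The stable case immediately gives $f(\mathbf{t}) \in (x_m) = (p(t_1),p(t_k))$ by Lemma \ref{ideals}; in the unstable case the identity $x_l x_m = 0$ is exactly what allows Lemma \ref{ideals}(i) to convert $(1 + x_l + x_m) f(\mathbf{t}) = 0$ into $f(\mathbf{t}) \in (x_l, x_m)$.

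I expect the principal point to watch is the precise matching between the Boolean orthogonality $x_l x_m = 0$ and the annihilator criterion of Lemma \ref{ideals}(i): the whole argument works because the element $1 + x_l + x_m$ produced by the summation is precisely the annihilator of the ideal $(x_l, x_m)$ in this orthogonal case. A minor subtlety is the degenerate situation $|J| \leq 2$, where the summation ranges above may become empty; but in those cases $(p(t_1), p(t_k))$ is already all of $B$, so the conclusion is automatic.
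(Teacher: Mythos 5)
Your proof is correct: the bimodule structure you describe is the right one, each extension $(\mathbf t,x_j)$ (resp.\ $(x_j,\mathbf t)$) you use does lie in $K^{k+1}_{k+1}(V_*\uoplus A_*)$ because the appended pair multiplies to zero, and the multiply-by-$x_j$-and-sum argument together with Lemma \ref{ideals}$(i)$ yields the claim in every case; your treatment of the degenerate situations $|J|\le 2$ is also fine, since there $(p(t_1),p(t_k))$ contains $1$ (the atoms of $A$ sum to $1$). The paper reaches the same conclusion in one uniform step: it sets $x:=1+p(t_1)+p(t_k)+p(t_1)p(t_k)\in A_1$, checks $xt_1=xt_k=0$ (so that $x\otimes t_1\otimes\cdots\otimes t_k\in K^{k+1}_{k+1}(V_*\uoplus A_*)$), writes the cocycle identity for this single left extension and multiplies it by $x$, using $x^2=x$ and $xt_k=0$ to get $xf(\mathbf t)=0$, which is exactly the hypothesis of Lemma \ref{ideals}$(i)$. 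Your atomwise summations reconstruct precisely this element $x$ (in your cases the annihilators $1$, $1+x_l$, $1+x_m$, $1+x_l+x_m$ all coincide with $1+p(t_1)+p(t_k)+p(t_1)p(t_k)$, since $p(t_1)p(t_k)=0$ unless the sequence is stable), so the two arguments share the key mechanism---kill one term of the cocycle identity by multiplying with an element orthogonal to the appended entry---but the paper's single non-atomic extension avoids your case analysis, the identity $\sum_j x_j=1$, and the separate discussion of small $J$.
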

\begin{proof} 
We define the element $x:=1+p(t_1)+p(t_k)+p(t_1)p(t_k)\in A_1 \subset B_1 = B$. 
We claim that $x$ satisfies 
$$xt_1=0=xt_k.$$
First, elements in $I$ are orthogonal to all elements and the claim is true for $t_1,t_k\in I$. 
Second, if $t_1\in J$, then $xt_1  =  t_1 + t_1^2 + t_1p(t_k) + t_1^2p(t_k) = t_1 + t_1  + t_1p(t_k) +  t_1p(t_k) = 0$. 
And we  have a  similar calculation if $t_k\in J$. This shows the claim. 

Hence $x\otimes t_1\otimes\cdots\otimes t_{k-1}\otimes t_k\in K^{k+1}_{k+1}(V_*\uoplus A_*)$. 
Since $f$ is a cocycle:
$$xf(t_1\otimes t_2\otimes\cdots\otimes t_k)+
f(x\otimes t_1\otimes\cdots\otimes t_{k-1})t_k=0.$$
Multiplying both sides of this equation by $x$ we get that 
$$0=x^2f(t_1\otimes t_2\otimes\cdots\otimes t_k)+
f(x\otimes t_1\otimes\cdots\otimes t_{k-1})xt_k=xf(t_1\otimes t_2\otimes\cdots\otimes t_k).$$
The claim now follows from part $(i)$ of Lemma \ref{ideals}. 
\end{proof}

\begin{defn} 
For every stable admissible sequence $\mathbf t$, by Lemma \ref{gereon_relation}, we have 
$$f(\mathbf t)=\alpha(\mathbf t)$$
for a unique $\alpha(\mathbf t)\in (p(t_1))$. Set $\beta(\mathbf t)=\alpha(\mathbf t)$ in this case. 
For every unstable admissible sequence $\mathbf t$, Lemma \ref{gereon_relation} implies 
$$f(\mathbf t)=\alpha(\mathbf t)+\beta(\mathbf t),\quad\alpha(\mathbf t)\in(p(t_1)),\beta(\mathbf t)\in(p(t_k)),$$
where $\alpha(\mathbf t),\beta(\mathbf t)$ are uniquely determined by part $(ii)$ of Lemma \ref{ideals}. 
For every admissible sequence $\mathbf t$ we call $\alpha(\mathbf t),\beta(\mathbf t)$ the {\it head, tail value} of $f(\mathbf t)$, respectively. 
\end{defn}


Let $A_k(I,n)$ denote the set of admissible sequences $t_1,t_2,\ldots,t_k\in I\cup J$, 
where we recall that $n$ denotes the cardinality of $J$, i.e., the $\F_2$-dimension of the finite subring $A \subset B$, 
which determines the associated Boolean graded algebra $A_*$ by Lemma \ref{lemma:boolean}. 

\begin{defn} \label{translations}
We define two maps $L \colon A_k(I,n)\to A_k(I,n)$ and $R \colon  A_k(I,n)\to A_k(I,n)$, the {\it left} and {\it right translations}, as follows. 
Let
$$L(t_1,t_2,\ldots,t_k)=\begin{cases}
t_1,t_2,\ldots,t_k & \text{, if the sequence is stable} ,\\
t_2,t_3,\ldots,t_k,t_1 & \text{, if the sequence is unstable, } \end{cases}$$
and similarly
$$R(t_1,t_2,\ldots,t_k)=\begin{cases}
t_1,t_2,\ldots,t_k & \text{, if the sequence is stable} ,\\
t_k,t_1,t_2,\ldots,t_{k-1} & \text{, if the sequence is unstable. } \end{cases}$$
\end{defn}
These two maps translate the sequence to the left or to the right, respectively, and put the term which drops out to the other end. 
It is also clear that they are well-defined, i.e., they map admissible sequences to admissible sequences. Note that $L$ and $R$ are inverses of each other, so we get a permutation, or a cyclic group action on $A_k(I,n)$. 
The fixed points of this action are exactly the stable admissible sequences.

\begin{lemma}\label{head/tail_Q} For every admissible sequence $\mathbf t=t_1,t_2,\ldots,t_k\in I\cup J$ we have:
$$\alpha(R(\mathbf t))=\beta(\mathbf t),$$
and equivalently:
$$\beta(L(\mathbf t))=\alpha(\mathbf t).$$
\end{lemma}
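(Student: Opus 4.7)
The plan is to extract both identities from a single application of the cocycle relation $\partial f = 0$ to a cleverly chosen $(k+1)$-sequence. The stable case is immediate, since then $R(\mathbf{t}) = \mathbf{t}$ and $\alpha(\mathbf{t}) = \beta(\mathbf{t})$ by definition. The case when $t_k \in I$ is also essentially trivial: $p(t_k) = 0$ forces $\beta(\mathbf{t}) = 0$, and since $R(\mathbf{t})$ then starts with $t_k \in I$, one likewise has $\alpha(R(\mathbf{t})) = 0$. So the substance is the unstable case with $t_k \in J$.

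In that case I would form the augmented sequence $\mathbf{s} = (t_k, t_1, t_2, \ldots, t_k)$ of length $k+1$. The first task is to check that $\mathbf{s}$ is admissible: the only new adjacent pair is $(t_k, t_1)$, and unstability of $\mathbf{t}$ combined with $t_k \in J$ forces either $t_1 \notin J$ or $t_1 \in J$ with $t_1 \neq t_k$, so admissibility is preserved. By Lemma \ref{koszul_explicit_2} we then have $\mathbf{s} \in K^{k+1}_{k+1}(V_* \uoplus A_*)$, so the cocycle identity of Remark \ref{rem:HHviaKoszulcomplex} applied to $\mathbf{s}$ reads (in characteristic $2$)
\[
t_k \cdot f(\mathbf{t}) = f(R(\mathbf{t})) \cdot t_k.
\]
Since $t_k \in J \subset B_1$ and multiplication in $B$ is commutative, both products equal $t_k f(R(\mathbf{t}))$ in $B$.

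The next step is to evaluate both sides via the head/tail decomposition. For the left hand side, $t_k \cdot \beta(\mathbf{t}) = \beta(\mathbf{t})$ because $\beta(\mathbf{t}) \in (t_k)$ and $t_k$ is idempotent, while $t_k \cdot \alpha(\mathbf{t}) = 0$: indeed $\alpha(\mathbf{t}) \in (p(t_1))$, and either $p(t_1) = 0$ (if $t_1 \in I$) or $t_1 \in J$ with $t_1 \neq t_k$, so orthogonality $t_1 t_k = 0$ in the Boolean ring $A$ kills the product. Hence $t_k \cdot f(\mathbf{t}) = \beta(\mathbf{t})$. The symmetric computation for $R(\mathbf{t})$, whose first letter is $t_k$ and whose last letter is $t_{k-1}$, gives $t_k \cdot f(R(\mathbf{t})) = \alpha(R(\mathbf{t}))$ by the same idempotency/orthogonality reasoning (admissibility of $\mathbf{t}$ forbids $t_{k-1} = t_k$ in $J$). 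Combining these equalities yields $\alpha(R(\mathbf{t})) = \beta(\mathbf{t})$.

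The second equivalent identity $\beta(L(\mathbf{t})) = \alpha(\mathbf{t})$ is then formal: since $L$ and $R$ are mutually inverse permutations of $A_k(I,n)$, applying the first identity to $L(\mathbf{t})$ in place of $\mathbf{t}$ gives $\alpha(\mathbf{t}) = \alpha(R(L(\mathbf{t}))) = \beta(L(\mathbf{t}))$. The only nontrivial step in the whole argument is the case analysis that underlies the admissibility of $\mathbf{s}$ and the vanishing of the cross-terms $t_k \alpha(\mathbf{t})$ and $t_k \beta(R(\mathbf{t}))$; everything else is bookkeeping with the Boolean relations $x^2 = x$ and $x_i x_j = 0$ for $i \neq j$.
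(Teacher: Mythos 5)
Your proof is correct and essentially coincides with the paper's: the identity you extract from the augmented sequence $(t_k,t_1,\ldots,t_k)$ is exactly the first of the two cocycle relations the paper invokes, and you obtain the second identity formally from $L=R^{-1}$ instead of from the paper's second relation. You merely spell out details the paper leaves implicit (admissibility of the augmented sequence, the trivial case $t_k\in I$, and the idempotency/orthogonality bookkeeping).
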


\begin{proof} The case of a stable admissible sequence is trivial. If $\mathbf t$ is unstable, the two identities above follow from the two relations:
\begin{align*}
t_kf(t_1\otimes t_2\otimes\cdots\otimes t_k)+
f(t_k\otimes t_1\otimes\cdots\otimes t_{k-1})t_k=0, \\
t_1f(t_2\otimes\cdots\otimes t_k\otimes t_1)+
f(t_1\otimes t_2\otimes\cdots\otimes t_k)t_1=0, 
\end{align*}
respectively. 
\end{proof}

\begin{rem} Note that the lemma above covers all relations between head and tail values for cocycles. 
This is true because all relations for cocycles are spanned by the relations:
\begin{equation}\label{0.7.1_Q}
t_1f(t_2\otimes t_3\otimes\cdots\otimes t_{k+1})+
f(t_1\otimes t_2\otimes\cdots\otimes t_k)t_{k+1}=0,
\end{equation}
where $\mathbf t=t_1,t_2,\ldots,t_{k+1}\in I\cup J$ is any admissible sequence.  
In particular there is no relation between head and tail values for admissible sequences lying in different orbits of $R$. Within every orbit of the right translation operator the head value of the right translate is equal to the tail value of the translated sequence, but otherwise we are free to choose the head and tail values within an orbit. 
\end{rem}

\begin{defn} We define two maps
$$l \colon A_k(I,n)\to A_{k-1}(I,n)\quad\textrm{and}
\quad r\colon A_k(I,n)\to A_{k-1}(I,n),$$
the {\it left and right truncations}, as follows. Let
$$l(t_1,t_2,\ldots,t_k)=t_1,t_2,\ldots,t_{k-1},$$
and similarly
$$r(t_1,t_2,\ldots,t_k)=t_2,t_3,\ldots,t_k.$$
These maps are clearly well-defined, i.e., they map admissible sequences to admissible sequences. 
We say that an orbit $O\subset A_k(I,n)$ of $R$ is {\it stable} if it consists of one element, and it is {\it unstable} if it consists of at least two elements. 
\end{defn}

\begin{lemma}\label{truncation} Let $O\subset A_k(I,n)$ be an unstable orbit and let $\mathbf i,\mathbf j$ be two elements of $O$. The following holds:
\begin{enumerate}
	\item[$(i)$] if $l(\mathbf i)=l(\mathbf j)$ then $\mathbf i=\mathbf j$, 
	\item[$(ii)$] if $r(\mathbf i)=r(\mathbf j)$ then $\mathbf i=\mathbf j$, 
	\item[$(iii)$] we have $r(\mathbf i)=l(\mathbf j)$ if and only if $\mathbf i=R(\mathbf j)$ and $\mathbf j=L(\mathbf i)$. 
\end{enumerate}
\end{lemma}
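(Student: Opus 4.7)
The plan is to exploit the cyclic structure of the action of $R$ on an unstable orbit. Since $R$ is the identity on stable sequences (in the sense of Definition \ref{translations}), every element of an unstable orbit $O$ is itself an unstable admissible sequence, on which $R$ acts as an honest cyclic right shift. Consequently any two elements $\mathbf i, \mathbf j \in O$ satisfy $\mathbf j = R^s(\mathbf i)$ for some $s \in \{0, 1, \ldots, k-1\}$, and under the identification of the index set $\{1, \ldots, k\}$ with $\Z/k\Z$ (identifying $k$ with $0$) one has $(R^s \mathbf i)_p = i_{p-s}$.

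For part (i), the hypothesis $l(\mathbf i) = l(\mathbf j)$ unpacks as the $k-1$ equalities $i_p = i_{p-s}$ for $p \in \{1, \ldots, k-1\}$ in $\Z/k\Z$; exactly one of the $k$ ``natural'' relations, namely the one at $p = k \equiv 0$, is missing. Interpreting these relations as edges in the Cayley graph of $\Z/k\Z$ with generator $s$, we have every edge of that graph save one. The full Cayley graph is a disjoint union of $d := \gcd(k,s)$ cycles of length $k/d$, one per coset of $\langle s \rangle$; removing a single edge from one of these cycles turns it into a path, which remains connected. Therefore $\mathbf i$ is constant on each coset of $\langle s \rangle$, i.e.\ it is periodic of period $d$. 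Since $d \mid s$, a cyclic shift by $s$ fixes any sequence of period $d$, so $\mathbf j = R^s(\mathbf i) = \mathbf i$. Part (ii) follows by the identical argument with the missing equation now arising at $p = 1$ instead of $p = k$.

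For part (iii), the implication $\mathbf j = L(\mathbf i) \Rightarrow r(\mathbf i) = l(\mathbf j)$ is a direct computation: if $\mathbf j = L(\mathbf i) = (i_2, \ldots, i_k, i_1)$, then $l(\mathbf j) = (i_2, \ldots, i_k) = r(\mathbf i)$. For the converse, note that $L(\mathbf i) \in O$ and $l(L(\mathbf i)) = (i_2, \ldots, i_k) = r(\mathbf i) = l(\mathbf j)$, so part (i) forces $\mathbf j = L(\mathbf i)$, equivalently $\mathbf i = R(\mathbf j)$.

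The main substantive step is the periodicity argument in parts (i) and (ii), which reduces to the elementary fact that deleting a single edge from a cycle leaves it connected. Modulo this combinatorial observation the remainder is a direct translation between admissible sequences, cyclic shifts and coordinate equations.
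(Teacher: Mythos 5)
Your proof is correct, but it takes a different route from the paper for the key parts $(i)$ and $(ii)$. The paper's argument is a short counting trick: since $\mathbf j$ lies in the orbit of $\mathbf i$ it is a (cyclic) permutation of $\mathbf i$, so the number of occurrences of the last entry $i_k$ is the same in both sequences; combined with $l(\mathbf i)=l(\mathbf j)$ this forces $j_k=i_k$ and hence $\mathbf i=\mathbf j$ (and symmetrically for $(ii)$). Note that this uses only that cyclic shifts preserve the multiset of entries. Your argument instead exploits the full cyclic structure: writing $\mathbf j=R^s(\mathbf i)$, you read $l(\mathbf i)=l(\mathbf j)$ as all but one of the shift relations $i_p=i_{p-s}$ in $\Z/k\Z$, and conclude via connectivity of a cycle minus an edge that $\mathbf i$ is constant on cosets of $\langle s\rangle$, hence $\gcd(k,s)$-periodic and fixed by $R^s$. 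This is heavier machinery for the same conclusion (and one should silently note the degenerate case $k/\gcd(k,s)=2$, where the "cycle" is a doubled edge, though your conclusion still holds there), but it is sound and even gives slightly more information (the periodicity of $\mathbf i$). Your part $(iii)$ is essentially the paper's: the forward direction is the identities $l(L(\mathbf i))=r(\mathbf i)$, $r(R(\mathbf j))=l(\mathbf j)$, and the converse reduces to part $(i)$ (the paper reduces to part $(ii)$ via $R(\mathbf j)$, which is the mirror image of your reduction via $L(\mathbf i)$).
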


\begin{proof} We first prove part $(i)$. Since $\mathbf j$ is a permutation of $\mathbf i$, the number of occurrences of $i_k$ in $\mathbf j$ is the same as in $\mathbf i$. Since $l(\mathbf i)=l(\mathbf j)$, the number of occurrences of $i_k$ in $l(\mathbf j)$ is the same as in $l(\mathbf i)$. This is only possible if $j_k=i_k$, and so part $(i)$ is true. The proof of part $(ii)$ is essentially the same as the proof of part $(i)$ above, just left and right have to be exchanged. By definition $r(R(\mathbf j))=l(\mathbf j)$ and $l(L(\mathbf i))=r(\mathbf i)$, so the reverse implication of part $(iii)$ is clearly true. On the other hand if $r(\mathbf i)=l(\mathbf j)$ then $r(\mathbf i)=r(R(\mathbf j))$ by the above, so
$\mathbf i=R(\mathbf j)$ by part $(ii)$, and hence $\mathbf j=L(\mathbf i)$, too. So part $(iii)$ holds, too.
\end{proof}

\begin{defn} 
The {\it truncation set} $\mathbb T(O)$ of an unstable orbit $O$ as  above is its image with respect to the map $l$. 
By part $(iii)$ of Lemma \ref{truncation} this is the same as the image of $O$ with respect to the map $r$. 
By parts $(i)$ and $(ii)$ of Lemma \ref{truncation}, respectively, we get that both $l:O\to\mathbb T(O)$ and $r \colon O\to\mathbb T(O)$ are bijective. 
Let $l^{-1} \colon \mathbb T(O)\to O$ and $r^{-1}\colon \mathbb T(O)\to O$ be the inverse of these two maps, respectively. 
\end{defn}

We are finally ready to prove the following theorem:
\begin{thm}\label{thm:maintheoremfinite} 
Let $V$ be an $\F_2$-vector space and $V_*$ its associated dual algebra. 
Let $B$ be a Boolean ring, $A\subset B$ a finite Boolean subring, and let $A_*$ and $B_*$, respectively, be the associated Boolean graded algebras.
We have:  
\[
\HH^{k,2-k}(V_*\uoplus A_*,V_* \uoplus B_*) = 0 ~ \text{for all} ~ k \ge 3.
\]
Assume $|A|\ge 8$ and let $\mm<0$ be a negative integer. Then we have:  
\[
\HH^{k,\mm}(V_*\uoplus A_*,V_* \uoplus B_*)=0 ~ \text{for all} ~ k \ge 0 ~ \text{with} ~ k \ne 1- \mm.
\]
\end{thm}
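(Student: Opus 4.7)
By Proposition \ref{prop:Booleanisquadratic}, $V_*\uoplus A_*$ is a Koszul algebra, so Proposition \ref{prop:HHviaKoszulcomplex} together with Remark \ref{rem:HHviaKoszulcomplex} reduces the computation to the cohomology of the cochain complex whose $k$-th term is the $\F_2$-vector space of linear maps $f\colon K_k^k(V_*\uoplus A_*)\to (V_*\uoplus B_*)_{k+s}$, with differential
\[
\partial f(\mathbf s) = s_1\cdot f(r(\mathbf s)) + f(l(\mathbf s))\cdot s_{k+1}.
\]
A cochain is determined by its values on admissible sequences (Lemma \ref{koszul_explicit_2}). I will split into cases on the internal degree $k+s$: the substantive ones are $k+s\ge 2$ (which covers Part 1 and, under $s<0$, the subrange of Part 2 with $k\ge 3$) and $k+s=0$ (which occurs only in Part 2 and is where the hypothesis $|A|\ge 8$ enters). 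The trivial range $k+s<0$ has vanishing codomain, and $k+s=1$ is excluded.

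\textbf{The range $k+s\ge 2$.} Here the codomain identifies with $B$ and the head/tail calculus of Lemmas \ref{gereon_relation}--\ref{head/tail_Q} applies: write $f(\mathbf t) = \alpha(\mathbf t)+\beta(\mathbf t)$ with $\alpha(\mathbf t)\in(p(t_1))$, $\beta(\mathbf t)\in(p(t_k))$, and use $\alpha(R(\mathbf t))=\beta(\mathbf t)$. To construct a primitive $g$ with $\partial g=f$, I specify $g$ orbit by orbit under the $R$-action. Over each unstable orbit $O$ the local assignment $g(r(\mathbf t)):=\alpha(\mathbf t)$ is well-defined on $\mathbb T(O)$ (Lemma \ref{truncation}(ii)); using $l(\mathbf t)=r(R(\mathbf t))$ (Lemma \ref{truncation}(iii)), the identity $p(t_1)\alpha(\mathbf t)=\alpha(\mathbf t)$, and the connected-sum vanishings $V_+\cdot B_+=B_+\cdot V_+=0$, one verifies $\partial g=f$ on $O$. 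When a length-$(k-1)$ sequence $\mathbf s$ lies in the truncation sets of several unstable length-$k$ orbits -- i.e.\ there are several admissible first entries $u_i\in J$ making $(u_i,\mathbf s)$ unstable -- the local constraints $u_i\,g(\mathbf s)=\alpha((u_i,\mathbf s))\in (u_i)$ form a system on $g(\mathbf s)$; since the annihilator ideals $(1-u_i)$ of distinct idempotents in $J$ satisfy $(1-u_i)+(1-u_j)=B$ (by the orthogonality $u_iu_j=0$), this Chinese-remainder-style system is simultaneously solvable, and a common $g(\mathbf s)\in B$ exists. Finally, each stable-orbit value $\alpha(\mathbf t_0)\in(t_1)$ for $\mathbf t_0=(t_1,\ldots,t_{k-1},t_1)$ is absorbed by adjusting $g(l(\mathbf t_0))$ by an element of $(t_1)$; such an adjustment is annihilated by every $y\in J\setminus\{t_1\}$ (orthogonality) and by $y\in I$ (connected-sum vanishing), so it changes $\partial g$ only at $\mathbf t_0$ itself and does not spoil the unstable-orbit equations already solved.

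\textbf{The case $k+s=0$.} Here $s=-k$, the codomain is $\F_2$, and the preceding cochain group is zero, so it suffices to show that every cocycle vanishes identically. Extending an admissible $\mathbf t$ to $\mathbf s=(u,t_1,\ldots,t_k)$ and multiplying the cocycle identity
\[
u\cdot f(\mathbf t) + f(u,t_1,\ldots,t_{k-1})\cdot t_k = 0\quad\text{in}\quad V\oplus B
\]
by $u$ yields $u\cdot f(\mathbf t)=0$ as soon as $ut_k=0$, i.e.\ $u\in J\setminus\{t_k\}$; combined with admissibility $u\ne t_1$, one needs $u\in J\setminus\{t_1,t_k\}$, which is nonempty precisely when $|J|\ge 3$, i.e.\ $|A|\ge 8$. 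In the remaining configurations of $(t_1,t_k)$ a single $u\in J$ or an $u\in I$ (extracting the $V$-component) suffices. In every case $f(\mathbf t)=0$, so $\HH^{k,-k}=0$.

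\textbf{Main obstacle.} The most delicate step is the global consistency of the partial primitives $g_O$ on overlapping truncation sets in the range $k+s\ge 2$: the cocycle condition does not directly equate the distinct candidate values of $g$ at an overlap, and the reconciliation rests entirely on the Boolean-ring calculation with annihilator ideals $(1-u_i)$. Once this patching is secured, the stable-orbit absorption is structurally clean, and the $k+s=0$ argument is a short direct manipulation.
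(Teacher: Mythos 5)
Your proposal is correct, and its main ingredients are exactly those of the paper: the reduction to the Koszul cochain complex via Proposition \ref{prop:HHviaKoszulcomplex} and Lemma \ref{koszul_explicit_2}, the head/tail values of Lemmas \ref{gereon_relation} and \ref{head/tail_Q}, the translation/truncation combinatorics, and the direct degree-zero argument where $|A|\ge 8$ (so $|I\cup J|\ge 3$) forces every cocycle with values in $\F_2$ to vanish. Where you diverge is the assembly of the primitive in the range $k+s\ge 2$: you build one global $g$ and must then reconcile the competing prescriptions for $g(\mathbf s)$ at a truncation shared by several orbits, which you do by a CRT argument with the comaximal ideals $(1+u_i)$, followed by an absorption adjustment for stable orbits. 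This can be made to work (indeed $g(\mathbf s)=\sum_i\alpha((u_i,\mathbf s))$ satisfies all head constraints simultaneously, since the $u_i$ are orthogonal idempotents and $\alpha((u_i,\mathbf s))\in(u_i)$), but the paper shows the whole patching issue is moot: the cocycle relations \eqref{0.7.1_Q} never couple values of $f$ on different orbits of the right translation $R$, so one may assume $f$ is supported on a single orbit, construct for that orbit an explicit primitive supported on its truncation set (value $\alpha(\mathbf t)t_1^{j-1}$ on $r(\mathbf t)$ in the stable case, $\alpha(r^{-1}(\mathbf s))p(a(r^{-1}(\mathbf s)))^{j-1}$ in the unstable case, as in Lemmas \ref{stable} and \ref{unstable}), verify that its coboundary vanishes off the orbit by the same orthogonality and connected-sum vanishing you use, and then simply add the orbitwise primitives -- overlapping truncation sets cause no conflict because the primitives are summed, not required to agree, and the stable case needs no a posteriori absorption. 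So what you flag as the main obstacle dissolves under the orbit decomposition of the cocycle; the remainder of your case analysis (vanishing codomain for $k+s<0$, exclusion of $k+s=1$, and the $k+s=0$ computation) matches the paper's proof.
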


\begin{proof} 
For $k \ge 2$ and $j\ge 2$, we are going to show that every cocycle 
\[
f \colon K^k_k(V_*\uoplus A_*)\to B_j
\]
is a coboundary. 
This will imply that $\HH^{k,s}(V_*\uoplus A_*,V_* \uoplus B_*)=0$ for all pairs $(k, \mm)$ with $k\ge 2$ and $\mm \ge 2-k$. 
The remaining cases will be discussed afterwards. 
By the above it will be enough to show the claim orbit by orbit, that is, we may assume without loss of generality that $f$ is supported on an orbit 
$O\subset A_k(I,n)$ of the right translation $R$. We will let $\alpha$ and
$\beta$ denote the head and tail values of $f$, as above.

First assume that $O$ is a stable orbit $\{\mathbf t\}$ with $\mathbf t=t_1,t_2,\ldots,t_k\in I\cup J$. In particular, $t_1=t_k$ and these two elements are in $J$. 
Let $g \colon K^{k-1}_{k-1}(V_*\uoplus A_*)\to B_{j-1}$ be the cochain given by the rule:
$$g(\mathbf s)=\begin{cases}
\alpha(\mathbf t)t_1^{j-1} & \text{, if $\mathbf s=r(\mathbf t)$} ,\\
0 & \text{, otherwise.} \end{cases}$$
\begin{lemma}\label{stable} We have $\partial g=f$.
\end{lemma}

\begin{proof} Note that $t_{k-1}\neq t_k$ since $t_k\in J$ and $\mathbf t$ is admissible. 
Therefore $l(\mathbf t)=t_1,\ldots,t_{k-1}$ is different from $r(\mathbf t)$, and hence
\[
\partial g(\mathbf t)=t_1g(r(\mathbf t))+g(l(\mathbf t))t_k=
\alpha(\mathbf t)t_1^j+0=f(\mathbf t).
\]
Now let $\mathbf u = u_1,u_2,\ldots,u_{k} \in I \cup J$ be an admissible sequence different from $\mathbf t$. 
If $g(r(\mathbf u))\neq 0$, then $r(\mathbf u) = r(\mathbf t)$, and hence $u_1\neq t_1$, as $\mathbf u$ and $\mathbf t$ are different. 
Therefore $u_1g(r(\mathbf u)) = u_1\alpha(\mathbf t)t_1^{j-1}=0$ in this case since $j-1\ge 1$ by assumption, $\alpha(\mathbf t) \in (p(t_1))$ and the product of $u_1$ and $t_1$ is zero when $u_1 \ne t_1$. 
Otherwise, $g(r(\mathbf u))=0$, and hence $u_1g(r(\mathbf u))=0$ trivially in this case. 

If $g(l(\mathbf u)) \neq 0$, then $l(\mathbf u)=r(\mathbf t)$, and so $u_{k-1}=t_k=t_1$. 
So $u_k$ is different from $t_1$, since $\mathbf u$ is admissible. 
Hence we have $g(l(\mathbf u))u_k=\alpha(\mathbf t)t_1^{j-1}u_k=0$ in this case since $j-1\ge 1$ and the product of $t_1$ and $u_k$ is zero when $u_k\ne t_1$. 
Otherwise, $g(l(\mathbf u))=0$, and hence $g(l(\mathbf u))u_k=0$ trivially in this case. 
We get that
\begin{equation*}
\partial g(\mathbf u) = u_1g(r(\mathbf u)) + g(l(\mathbf u)) u_k = 0 + 0 = 0. \qedhere
\end{equation*}
\end{proof}

Now assume that $O$ is an unstable orbit. 
For every sequence $\mathbf t = t_1,t_2,\ldots,t_k\in I\cup J$, 
let $a(\mathbf t), b(\mathbf t)$ denote the first term $t_1$ and the last term $t_k$ of $\mathbf t$, respectively. 
Let $g \colon K^{k-1}_{k-1}(V_*\uoplus A_*)\to B_{j-1}$ be the cochain given by the rule:
\[
g(\mathbf s)=\begin{cases}
\alpha(r^{-1}(\mathbf s))p(a(r^{-1}(\mathbf s)))^{j-1} & \text{, if $\mathbf s\in\mathbb T(O)$} ,\\
0 & \text{, otherwise.} \end{cases}
\]
In particular, $g$ is supported on the truncation set of $O$.

\begin{lemma}\label{other_side} We have:
\[
g(\mathbf s)=\beta(l^{-1}(\mathbf s))p(b(l^{-1}(\mathbf s)))^{j-1}
\]
for every $\mathbf s\in\mathbb T(O)$. 
\end{lemma}
\begin{proof} 
Since $l(l^{-1}(\mathbf s))=\mathbf s=r(r^{-1}(\mathbf s))$, we get that $l^{-1}(\mathbf s)=L(r^{-1}(\mathbf s))$ by part $(iii)$ of Lemma \ref{truncation}. 
Therefore, $b(l^{-1}(\mathbf s))=a(r^{-1}(\mathbf s))$ and $\beta(l^{-1}(\mathbf s))=\alpha(r^{-1}(\mathbf s))$ by Lemma \ref{head/tail_Q}. 
Therefore, 
\begin{equation*}
g(\mathbf s)=\alpha(r^{-1}(\mathbf s))p(a(r^{-1}(\mathbf s)))^{j-1}=
\beta(l^{-1}(\mathbf s))p(b(l^{-1}(\mathbf s)))^{j-1}. \qedhere
\end{equation*}
\end{proof}

\begin{lemma}\label{unstable} 
We have $\partial g=f$.
\end{lemma}

\begin{proof} 
Let $\mathbf u = u_1,u_2,\ldots,u_k\in I\cup J$ be an admissible sequence. 
First assume that $\mathbf u \in O$. 
Then
\[
\partial g(\mathbf u) = u_1g(r(\mathbf u)) + g(l(\mathbf u))u_k 
=
\alpha(\mathbf u)p(u_1)^j+\beta(\mathbf u)p(u_k)^j = f(\mathbf u)
\]
where we use the definition of $g$ and Lemma \ref{other_side} in rewriting the first and second summand, respectively. 
Now assume that $\mathbf u$ is not in $O$. 
If $g(r(\mathbf u)) \neq 0$, then $r(\mathbf u) = r(\mathbf t)$ for some $\mathbf t\in O$. 
Then $u_1\neq t_1$, as $\mathbf u$ and $\mathbf t$ are different when $\mathbf u \notin O$. 
Therefore, $u_1g(r(\mathbf u)) = u_1\alpha(\mathbf t)p(t_1)^{j-1}=0$ in this case since $j-1\ge 1$ by assumption and $u_1 \ne t_1$.  
Otherwise, $g(r(\mathbf u))=0$, and hence $u_1g(r(\mathbf u))=0$ trivially in this case. 

If $g(l(\mathbf u)) \neq 0$, then $l(\mathbf u) = l(\mathbf t)$ for some $\mathbf t\in O$. 
Then $u_k\neq t_k$, as $\mathbf u$ and $\mathbf t$ are different. 
Therefore, since $j-1 \ge 1$ and $t_k \ne u_k$, we have $g(l(\mathbf u))u_k=\beta(\mathbf t)p(t_k)^{j-1}u_k=0$ by Lemma \ref{other_side} in this case. 
Otherwise, $g(l(\mathbf u))=0$, and hence $g(l(\mathbf u))u_k=0$ trivially in this case. We get that
\begin{equation*}
\partial g(\mathbf u) = u_1g(r(\mathbf u)) + g(l(\mathbf u))u_k = 0 + 0 = 0. \qedhere
\end{equation*}
\end{proof}

This finishes the proof for $k\ge 2$ and $\mm \ge 2-k$. 
Since both summands and hence the direct sum $V_j \oplus B_j$ are trivial for negative $j$, there are no nontrivial cochains and hence no nontrivial cocycles for $k+\mm<0$. 
Hence it remains to check the values $k\ge 1$ and $\mm = -k$. 
For $\mm = -k$, the first nontrivial differentials in the cochain complex we use to compute Hochschild cohomology are 
\begin{align*}
\Hom_{\F_2}(K^{k}_{k}(V_* \uoplus A_*), \F_2) \xto{\partial_{k}} \Hom_{\F_2}(K^{k+1}_{k+1}(V_* \uoplus A_*), V_{1} \oplus B_1) \xto{\partial_{k+1}} \ldots 
\end{align*} 
Let $f \colon K^k_k(V_* \uoplus A_*) \to \F_2$ be a cocycle. 
Let ${\mathbf t} = t_{i_1}\otimes \ldots \otimes t_{i_k}$ be an admissible sequence. 
Assuming that the set $I \cup J$ consists of at least three elements, there is a $t_{i_{k+1}}$ different from both $t_{i_1}$ and $t_{i_k}$. 
Then 
\[
t_{i_1}f(t_{i_2}\otimes t_{i_3}\otimes\cdots\otimes t_{i_{k+1}}) +
f(t_{i_1}\otimes t_{i_2}\otimes\cdots\otimes t_{i_k})t_{i_{k+1}}=0 
\]  
is only possible if the coefficients of $t_{i_1}$ and $t_{i_{k+1}}$ in the above sum on the left-hand side, i.e., $f(t_{i_2}\otimes t_{i_3}\otimes\cdots\otimes t_{i_{k+1}})$ and $f(t_{i_1}\otimes t_{i_2}\otimes\cdots\otimes t_{i_k})t$, are both $0$. 
In particular, we must have $f({\mathbf t})=0$. 
Thus there are no nontrivial cocycles in this case.  
This finishes the proof of Theorem \ref{thm:maintheoremfinite}. 
\end{proof}

\begin{rem}
We note that the assumption $|A|\ge 8$ cannot be dropped. 
For example, if $A_* = \F_2[x_1] \uoplus \F_2[x_2]$, then $\HH^{k,-k}(A_*)\ne 0$ for even $k > 0$. 
\end{rem}


\section{Mittag-Leffler functors and transfinite recursion on subrings}\label{section:ML}

In this section we recall and prove some results that we will need to extend the claim of Theorem \ref{thm:maintheoremfinite} to infinite subrings in the next section. 
First, we recall some facts about Mittag-Leffler functors.

\begin{defn}\label{ml-def} 
Let $\mathcal{AB}$ denote the category of abelian groups. 
For every ordinal $\kappa$, by slight abuse of notation we let the same symbol denote the associated small category induced by an ordering. 
Let $F \colon \kappa^{op}\to\mathcal{AB}$ be a functor. We say that the functor $F$ is {\it Mittag-Leffler} if
\begin{enumerate}
\item[(ML1)]\label{ML1} for every $\alpha\in\beta\in\kappa$ the map $F_{\beta}\to F_{\alpha}$ is surjective,
\item[(ML2)]\label{ML2} for every limit ordinal $\alpha\in\kappa$ the map $F_{\alpha}\to
\lim_{\beta\in\alpha}F_{\beta}$ is surjective.
\end{enumerate}
\end{defn} 

\begin{lemma}\label{ml-con} 
If $F \colon \kappa^{op}\to\mathcal{AB}$ is a functor such that
\begin{enumerate}
\item[(ML0)]\label{ML0_lemma} for every $\alpha+1\in\kappa$ the map $F_{\alpha+1}\to F_{\alpha}$ is surjective,
\item[(ML2)]\label{ML2_lemma} for every limit ordinal $\alpha\in\kappa$ the map $F_{\alpha}\to
\lim_{\beta\in\alpha}F_{\beta}$ is surjective.
\end{enumerate}
Then $F$ is Mittag-Leffler. 
\end{lemma}
\begin{proof} We only need to check condition (ML1) of Definition \ref{ml-def} for $F$. Note that we may assume without the loss of generality that
$\alpha=\emptyset$ by replacing $\kappa$ with the unique ordinal isomorphic to the segment $[\alpha,\kappa)$ and $F$ with $F|_{[\alpha,\kappa)}$. 
For every $\alpha\leq\beta$ in $\kappa$, let $\phi_{\beta,\alpha}:F_{\beta}\to F_{\alpha}$ be the transition map, and for every limit ordinal $\beta\in\kappa$, 
let $\phi_{\beta}:F_{\beta}\to\lim_{\alpha\in\beta}F_{\alpha}$ be the limit of the transition maps $\phi_{\beta,\alpha}$. 
Now let $\beta\in\kappa$ and $x\in F_{\emptyset}$ be arbitrary. 
We are going to construct using transfinite recursion for every
$\delta\leq\beta$ an element $x_{\delta}\in F_{\delta}$ with the following properties:
\begin{enumerate}
\item[$(i)$] we have $x_{\emptyset}=x$, 
\item[$(ii)$] for every $\gamma\leq\delta\leq\beta$ we have $\phi_{\delta,\gamma}(x_{\delta})=x_{\gamma}$, 
\item[$(iii)$] for every limit ordinal $\delta\leq\beta$ we have
$\phi_{\delta}(x_{\delta})=\lim_{\gamma\in\delta}x_{\gamma}$.
\end{enumerate}
This is sufficient to conclude that claim, as $\phi_{\beta,\emptyset}(x_{\beta})=
x_{\emptyset}=x$, so $\phi_{\beta,\emptyset}$ is surjective, as $x$ was arbitrary. Also note that the limit $\lim_{\gamma\in\delta}x_{\gamma}$ in $(iii)$ makes sense because of the compatibility in $(ii)$. Let us start the transfinite recursion: for the initial $\delta=\emptyset$ just take $x_{\emptyset}=x$. This satisfies $(i)$, the only non-trivial condition in this case. When $\delta$ is a limit ordinal there is an $x_{\delta}\in F_{\delta}$ such that $\phi_{\delta}(x_{\delta})=
\lim_{\gamma\in\delta}x_{\gamma}$ by condition (ML2). Clearly $\phi_{\delta,\gamma}(x_{\delta})=x_{\gamma}$ for every $\gamma\in\delta$. When
$\delta=\gamma+1$ is a successor ordinal then there is an $x_{\delta}\in F_{\delta}$ such that $\phi_{\delta,\gamma}(x_{\delta})=x_{\gamma}$ by condition (ML0). If $\alpha\in\delta$, then either $\alpha=\gamma$, and hence $\phi_{\delta,\alpha}(x_{\delta})=x_{\alpha}$ by the above, or $\alpha\in\gamma$, and hence
$\phi_{\delta,\alpha}(x_{\delta})=\phi_{\gamma,\alpha}(\phi_{\delta,\gamma}(x_{\delta}))=\phi_{\gamma,\alpha}(x_{\gamma})=x_{\alpha}$ by the induction hypothesis.
\end{proof}

\begin{prop}\label{neeman} 
If $F \colon \kappa^{op}\to\mathcal{AB}$ is a Mittag-Leffler functor then $\underleftarrow{\lim}^nF=0$ for every $n>0$.
\end{prop}
\begin{proof} This is Corollary A.3.14 of \cite{Ne}.
\end{proof}

\begin{lemma}\label{exact-lim} 
Let $F,G,H \colon \kappa^{op}\to\mathcal{AB}$ be functors which form a short exact sequence:
\[
\xymatrix{ 0\ar[r] & F \ar[r] & G \ar[r]
& H \ar[r] & 0.}
\]
If $F$ and $G$ are Mittag--Leffler then $\underleftarrow{\lim}^nH=0$ for every $n>0$.
\end{lemma}
\begin{proof} Since the higher limit functors $\underleftarrow{\lim}^n$ are derived functors, we have a long cohomological exact sequence (see \cite[\S 3.5]{We}):
$$\xymatrix{ \cdots\ar[r] & \underleftarrow{\lim}^nG \ar[r] 
& \underleftarrow{\lim}^nH \ar[r] & \underleftarrow{\lim}^{n+1}F \ar[r] & \cdots.}$$
associated to the short exact sequence above. Now we only need to apply Proposition \ref{neeman}. 
\end{proof}

Next we prove a result on the structure of the subrings of a Boolean ring. 
Let $B$ be a Boolean ring. 
For every subring $A\subset B$ and for every $x\in B$, 
let $A\langle x\rangle$ denote the subring of $B$ generated by $A$ and $x$.  
We assume now that $B$ is infinite and let $A \subset B$ be an infinite subring. 
Let $\kappa=|A|$ be its cardinality and choose a bijection $x \colon \kappa\to A$. 
Fix a subring $A_0\subset A$ such that $|A_0|=8$. 
Using transfinite recursion we can construct a set of subrings 
\begin{align}\label{eq:def_of_A_alpha}
\{A_{\alpha}\subseteq A\mid \alpha\in\kappa\}
\end{align} 
of $A$ such that
\begin{enumerate}
\item[$(a)$] we have $A_{\emptyset}=A_0$,
\item[$(b)$] we have $A_{\alpha+1}=A_{\alpha}\langle x(\alpha)\rangle$ for every
$\alpha\in\kappa$,
\item[$(c)$] we have $\bigcup_{\beta\in\alpha}A_{\beta}=A_{\alpha}$ for every limit ordinal $\alpha\in\kappa$.
\end{enumerate}
For this collection of subrings we have some additional properties:
\begin{lemma}\label{numerically_good} The following holds:
\begin{enumerate}
\item[$(i)$] we have $\bigcup_{\alpha\in\kappa}A_{\alpha}=A$,
\item[$(ii)$] we have $A_{\beta}\subseteq A_{\alpha}$ for every 
$\beta\in\alpha\in\kappa$,
\item[$(iii)$] we have $|A_{\alpha}|<\omega$ for every finite
$\alpha\in\kappa$,
\item[$(iv)$] we have $|A_{\alpha}|\leq|\alpha|$ for every infinite
$\alpha\in\kappa$,
\item[$(v)$] we have $8\leq|A_{\alpha}|<|A|$ for every $\alpha\in\kappa$. 
\end{enumerate}
\end{lemma}
\begin{proof} For every $\beta\in\kappa$ we have $x(\beta)\in
A_{\beta}\langle x(\beta)\rangle=A_{\beta+1}\subseteq\bigcup_{\alpha\in\kappa}A_{\alpha}$, so the latter is $A$, and hence $(i)$ holds. 
We are going to show
$(ii)$ by transfinite induction. 
When $\alpha$ is a limit ordinal then $A_{\beta}
\subseteq\bigcup_{\beta\in\alpha}A_{\beta}=A_{\alpha}$ by condition $(c)$. 
When $\alpha=\gamma+1$ is a successor ordinal then $A_{\gamma}\subseteq
A_{\gamma}\langle x_{\gamma}\rangle=A_{\alpha}$ by $(b)$. 
If $\beta\in\alpha$, then either $\beta=\gamma$, and hence $A_{\beta}\subseteq A_{\alpha}$ by the above, or $\beta\in\gamma$, and hence $A_{\beta}\subseteq A_{\gamma}\subseteq A_{\alpha}$ by the induction hypothesis. 
So $(ii)$ is true. 
Since every finitely generated subring of $A$ is finite, we get $(iii)$ by induction, as every positive finite ordinal is a successor ordinal, so condition $(b)$ applies. 

Now we are going to show $(iv)$ by transfinite induction. When $\alpha=\omega$ or a limit ordinal more generally, we have $|A_{\beta}|\leq|\alpha|$ for every
$\beta\in\alpha$ either by part $(iii)$ (when $\alpha=\omega$) or by the induction hypothesis. 
Therefore 
\[
|A_{\alpha}| = |\bigcup_{\beta\in\alpha}A_{\beta}| \leq |\alpha|^2 = |\alpha|
\]
using condition $(c)$. 
When $\alpha=\beta+1$ is a successor ordinal then by condition $(b)$ we have $A_{\alpha}=A_{\beta}\langle x_{\beta}\rangle$, so the latter is generated by $|\alpha|+1=|\alpha|$ elements, and hence its cardinality is at most  $|\alpha|$. 
Therefore $(iv)$ holds, too. 
For every $\alpha\in\kappa$ we have $A_{\emptyset}\subseteq A_{\alpha}$ by part $(ii)$, so $8=|A_{\emptyset}|\leq|A_{\alpha}|$. 
On the other hand, $|A_{\alpha}|<|A|$ for every $\alpha\in\kappa$ by part $(iii)$, when $\alpha$ is finite, and by part $(iv)$, 
when $\alpha$ is infinite since $|A|=\kappa > \alpha$. 
Therefore $(v)$ is true. 
\end{proof}


\section{Kadeishvili's vanishing for connected sums of dual and Boolean graded algebras - the infinite case}\label{section:Boolean_infinite}

Our next goal is to extend Theorem \ref{thm:maintheoremfinite} to the case of infinite subrings $A \subseteq B$. 
In particular, we will extent the theorem to the case $A=B$.  
This will require a considerable amount of work. 

\begin{notn} 
We fix a vector space $V$ over $\mathbb F_2$ and let $V_*$ denote the associated dual algebra over $\mathbb F_2$. We fix a Boolean ring $B$ and let $B_*$ denote the associated graded Boolean algebra. For every subring $A\subseteq B$ with associated Boolean graded algebra $A_*$ consider the cochain complex:
\[
\xymatrix{ \mathrm{Hom}_{\mathbb F_2}(K^{k}_{k}(V_* \uoplus A_*), \mathbb F_2) \ar[r]^-{\partial_{k}} &
\mathrm{Hom}_{\mathbb F_2}(K^{k+1}_{k+1}(V_* \uoplus A_*), V_1 \oplus B_1)
\ar[r]^-{\partial_{k+1}} & \cdots.}
\]
Let $F^{k,d-k}(A)$ denote the group $\mathrm{Hom}_{\mathbb F_2}(K^{k+d}_{k+d}(V_* \uoplus A_*), (V_*\uoplus B_*)_d)$ of degree $d$ cochains in this complex, and let $G^{k,d-k}(A)\subseteq F^{k,d-k}(A)$ be the subgroup of cocycles. 
\end{notn}

\begin{lemma}\label{zero} 
Let $A\subset B$ be a subring such that $|A|\geq8$. Then $G^{k,-k}(A)=0$ for all $k \ge 1$. 
\end{lemma}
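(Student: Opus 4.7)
The plan is to reduce the claim to the finite Boolean case already established in the proof of Theorem \ref{thm:maintheoremfinite}, which directly shows $G^{k,-k}(A')=0$ for every finite Boolean ring $A'$ with $|A'|\geq 8$ (the final part of that proof, for $s=-k$, actually exhibits the vanishing of all cocycles, not merely of the cohomology). Thus the issue is purely how to pass from finite Boolean subrings to an arbitrary Boolean subring $A\subseteq B$ with $|A|\geq 8$.

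For such an $A$, I would write $A$ as the directed union of its finite Boolean subrings and note that since $|A|\geq 8$, those finite Boolean subrings $A'\subseteq A$ with $|A'|\geq 8$ form a cofinal subsystem. The main technical step is to establish the covering
\[
K^k_k(V_* \uoplus A_*) = \bigcup_{A'} K^k_k(V_* \uoplus A'_*).
\]
For this, I would first use the explicit decomposition $R_{V_*\uoplus A_*}=V\otimes V \oplus V\otimes A \oplus A\otimes V \oplus \ker(A\otimes A\to A)$ together with the fact that $A'\subseteq A$ is a subring to verify $R_{V_*\uoplus A'_*} = R_{V_*\uoplus A_*}\cap((V\oplus A')^{\otimes 2})$. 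Then, choosing a basis of $V\oplus A$ that extends a basis of $V\oplus A'$ and decomposing tensors accordingly, a support argument on simple tensors yields the pointwise identity
\[
K^k_k(V_* \uoplus A'_*) = K^k_k(V_* \uoplus A_*)\cap (V\oplus A')^{\otimes k}.
\]
Because every element of $K^k_k(V_*\uoplus A_*)$ is a finite $\F_2$-linear combination of simple tensors involving only finitely many elements of $A$, and these lie in some finite Boolean subring that can be enlarged to have at least $8$ elements, the displayed union follows.

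With the covering in hand, the argument concludes as follows: given any cocycle $f\in G^{k,-k}(A)$, its restriction to $K^k_k(V_*\uoplus A'_*)$ is a cocycle in $G^{k,-k}(A')$, since the cochain differentials are compatible with the inclusions $K^{k+1}_{k+1}(V_*\uoplus A'_*)\hookrightarrow K^{k+1}_{k+1}(V_*\uoplus A_*)$. By the finite case, this restriction vanishes for every such $A'$, and the covering forces $f=0$. The main obstacle is the tensor-product intersection identity for $K^k_k$ under the subring inclusion $A'\subseteq A$; however, once one expresses tensors in a basis compatible with $V\oplus A'\hookrightarrow V\oplus A$, the verification reduces to bookkeeping on simple tensors and causes no real difficulty.
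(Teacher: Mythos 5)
Your proposal is correct and follows essentially the same route as the paper, which likewise writes $A$ as the union of its finite Boolean subrings containing a fixed subring of order $8$, invokes the cocycle-vanishing established in the $s=-k$ part of the proof of Theorem \ref{thm:maintheoremfinite}, and passes to the limit. Your covering identity $K^k_k(V_*\uoplus A_*)=\bigcup_{A'}K^k_k(V_*\uoplus A'_*)$ and the compatibility of restrictions merely spell out the ``taking the limit'' step that the paper's proof leaves implicit.
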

\begin{proof} Since $|A|\geq8$ there is a finite subring $A_0\subseteq A$ such that $|A_0|=8$. Since every finitely generated subring of $A$ is finite, the ring $A$ is the union of its finite subrings containing $A_0$. 
We already know the claim for the latter by the proof of Theorem \ref{thm:maintheoremfinite}. 
So by taking the limit we get the claim for $A$, too.  
\end{proof}

As a consequence we get the following
\begin{cor}\label{zero2} 
Let $A\subset B$ be a subring such that $|A|\geq8$. 
Then $$\mathrm{HH}^{k,-k+\mm}(V_*\uoplus A_*,V_* \uoplus B_*)=0$$
for all $k\geq 1,\mm \le 0$. 
\end{cor}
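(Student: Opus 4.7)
The plan is to reduce the corollary to Lemma \ref{zero} together with the elementary observation that $V_*\uoplus B_*$ vanishes in strictly negative degrees.

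First I would apply Proposition \ref{prop:Booleanisquadratic} to conclude that $V_*\uoplus A_*$ is a Koszul algebra, so that Proposition \ref{prop:HHviaKoszulcomplex} and Remark \ref{rem:HHviaKoszulcomplex} apply. They identify the graded Hochschild group
$$\HH^{k,-k+s}(V_*\uoplus A_*,\, V_*\uoplus B_*)$$
with the $k$-th cohomology of the cochain complex whose term at position $p$ is
$$\Hom_{\F_2}\!\bigl(K^p_p(V_*\uoplus A_*),\, (V_*\uoplus B_*)_{p-k+s}\bigr).$$

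Next I would split into cases on the sign of $s\leq 0$. If $s<0$, then the coefficient module $(V_*\uoplus B_*)_s$ already vanishes, since $V_*\uoplus B_*$ is non-negatively graded. In particular the cochain group in position $k$ is zero, so there are no nontrivial cocycles and $\HH^{k,-k+s}$ vanishes trivially in this range. If on the other hand $s=0$, then the cochain complex in question is exactly the one displayed in the definition of $F^{k,d-k}(A)$ before Lemma \ref{zero}, with $d=0$ placed at position $k$; by definition its cocycles at the initial position form the group $G^{k,-k}(A)$. Lemma \ref{zero} asserts, under the standing hypothesis $|A|\geq 8$, that $G^{k,-k}(A)=0$ for every $k\geq 1$, so $\HH^{k,-k}(V_*\uoplus A_*,V_*\uoplus B_*)=0$ as well.

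Combining the two cases yields the corollary. There is no real obstacle: all the effort already sits in Lemma \ref{zero}, whose proof in turn uses Theorem \ref{thm:maintheoremfinite} together with a colimit argument over the finite Boolean subrings of $A$. The corollary itself is only the bookkeeping step translating the vanishing of cocycles into the vanishing of Hochschild cohomology.
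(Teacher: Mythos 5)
Your proposal is correct and is exactly the paper's intended (unwritten) deduction: the corollary is stated as an immediate consequence of Lemma \ref{zero}, with the case $s<0$ vanishing for degree reasons since $(V_*\uoplus B_*)_s=0$, and the case $s=0$ reducing to the vanishing of the cocycle group $G^{k,-k}(A)$. Your bookkeeping via Proposition \ref{prop:Booleanisquadratic}, Proposition \ref{prop:HHviaKoszulcomplex} and Remark \ref{rem:HHviaKoszulcomplex} is precisely how the paper identifies these Hochschild groups with cohomology of the Koszul cochain complex, so there is nothing to add.
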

\begin{proof}
For $\mm=0$, the assertion follows from Lemma \ref{zero}. 
For $\mm<0$, there are no nontrivial maps in $\mathrm{Hom}_{\mathbb F_2}(K^{k+1}_{k+1}(V_* \uoplus A_*), V_\mm \oplus B_\mm)$, 
since both $V_\mm$ and $B_\mm$ are trivial for negative $\mm$.   
\end{proof} 

Recall that, for every subring $A\subset B$ and for every $x\in B$, 
$A\langle x\rangle$ denotes the subring of $B$ generated by $A$ and $x$.

\begin{prop}\label{surjective} 
Let $A\subset B$ be a subring. 
Then for every $x\in B$ the induced map $G^{k,1-k}(A\langle x\rangle)\to G^{k,1-k}(A)$ is surjective for all $k \ge 2$.
\end{prop}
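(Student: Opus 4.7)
The plan is to extend the given cocycle $f\in G^{k,1-k}(A)$ to a cocycle $\tilde f\in G^{k,1-k}(A\langle x\rangle)$ by distributing its head and tail values along the refinement of orthonormal bases. In the case of infinite $A$, I would reduce to the finite case by writing $A=\bigcup_\alpha A_\alpha$ as a union of finite subrings and invoking a Mittag--Leffler-type argument on the inverse system $\{G^{k,1-k}(A_\alpha\langle x\rangle)\}_\alpha$, exactly as in the proof of Lemma \ref{zero}. So suppose $A$ is finite. The orthonormal basis $J'$ of $A\langle x\rangle$ refines that of $A$: each $x_i\in J$ either lifts unchanged (when $xx_i\in\{0,x_i\}$) or splits into two orthogonal idempotents $y_i^{(1)}=xx_i$ and $y_i^{(2)}=(1+x)x_i$ with $x_i=y_i^{(1)}+y_i^{(2)}$. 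Let $\pi\colon J'\to J$ denote the resulting projection. Under $\iota$, a basis vector $\mathbf t$ of $K^{k+1}_{k+1}(V_*\uoplus A_*)$ becomes $\sum_\epsilon \tilde{\mathbf t}_\epsilon$, where $\epsilon$ runs over refinement patterns at split positions. Accordingly, the head ideal $(p(t_1))$ decomposes as $\bigoplus_\eta (p(\tilde t_1^{(\eta)}))$, giving canonical pieces $\alpha_\eta(\mathbf t)\in (p(\tilde t_1^{(\eta)}))$ with $\alpha(\mathbf t)=\sum_\eta\alpha_\eta(\mathbf t)$, and likewise $\beta(\mathbf t)=\sum_\eta\beta_\eta(\mathbf t)$ at position $k+1$.

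Using the head/tail characterization from Section \ref{section:Boolean_finite}, I would then define $\tilde f$ on admissible sequences $\tilde{\mathbf s}$ in $I\cup J'$ as follows. If $\pi(\tilde{\mathbf s})$ is not admissible in $I\cup J$ (that is, $\tilde{\mathbf s}$ contains consecutive pieces $y_i^{(1)},y_i^{(2)}$ of a single $x_i$), set $\tilde f(\tilde{\mathbf s})=0$. Otherwise, on a refinement $\tilde{\mathbf t}_\epsilon$ of an admissible $\mathbf t$, set
\[
\tilde f(\tilde{\mathbf t}_\epsilon) = \alpha_{\epsilon_1}(\mathbf t)\,[\epsilon_j=1\text{ for all }j\ne 1] + \beta_{\epsilon_{k+1}}(\mathbf t)\,[\epsilon_j=1\text{ for all }j\ne k+1]
\]
if $\mathbf t$ is unstable, and
\[
\tilde f(\tilde{\mathbf t}_\epsilon) = \alpha_{\epsilon_1}(\mathbf t)\,[\epsilon_1=\epsilon_{k+1}\text{ and }\epsilon_j=1\text{ for }1<j<k+1]
\]
if $\mathbf t$ is stable (using Iverson-bracket notation for the indicator). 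The $V$-component of $\tilde f$ is extended freely, because the connected-sum relations $V_1\cdot V_1 = V_1\cdot B_1 = 0$ force the $V$-valued part of the cocycle equation to hold automatically. The restriction identity $\iota^*\tilde f=f$ then follows orbit-by-orbit from $\sum_\eta\alpha_\eta=\alpha$ and $\sum_\eta\beta_\eta=\beta$: summing the formula over all refinement patterns of $\mathbf t$ recovers $\alpha(\mathbf t)+\beta(\mathbf t)=f(\mathbf t)$ in the unstable case and $\alpha(\mathbf t)=f(\mathbf t)$ in the stable case.

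The remaining and most delicate step is the cocycle identity $\tilde\alpha(R(\tilde{\mathbf s}))=\tilde\beta(\tilde{\mathbf s})$ on each $R$-orbit in $I\cup J'$. It splits into four cases: (i) orbits consisting of refinements of an unstable old orbit, where $R$ commutes with $\pi$ and the old relation $\alpha(R(\mathbf t))=\beta(\mathbf t)$ transfers cleanly through the pieces $\alpha_\eta,\beta_\eta$; (ii) stable refinements of a stable old orbit, which give singleton $R$-orbits with $\tilde\alpha=\tilde\beta$ by construction; (iii) orbits lying entirely in the locus where $\pi$ fails to be admissible, on which both sides vanish trivially; and (iv) the mixed case. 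The main obstacle is precisely case (iv), which arises when a stable old sequence is refined with $\epsilon_1\ne\epsilon_{k+1}$: the refinement $\tilde{\mathbf s}$ is then unstable in $I\cup J'$ but has stable projection, and applying $R$ produces sequences with consecutive pieces $y_i^{(\eta)},y_i^{(1-\eta)}$ whose projection is non-admissible, so the $R$-orbit is mixed. Cocycle consistency on such mixed orbits forces both sides of $\tilde\alpha(R(\tilde{\mathbf s}))=\tilde\beta(\tilde{\mathbf s})$ to vanish simultaneously, and this is exactly what the support condition $\epsilon_1=\epsilon_{k+1}$ in the stable-case formula is engineered to ensure. Completing this case analysis, together with the reduction to finite $A$, constitutes the technical heart of the proof.
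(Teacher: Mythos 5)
Your finite-case construction looks genuinely different from the paper's and appears workable: the paper first reduces to $f=xf$ via the decomposition $f=xf+(1-x)f$ and then supports the lift on the ``$x$-admissible'' sequences (the refinements lying entirely on the $x$-side of the canonical section $s$ of $\mathbf S(A\langle x\rangle)\to\mathbf S(A)$), whereas you distribute the head and tail values of $f$ over refinement patterns. Spot checks suggest your formula does satisfy the rotation relations orbitwise, and your support condition $\epsilon_1=\epsilon_{k+1}$ in the stable case is indeed the right fix for the mixed orbits; but note that you only assert case (iv) rather than verify it, and you implicitly use the converse of the head/tail characterization (that prescribing values in the head/tail ideals satisfying $\alpha(R(\mathbf t))=\beta(\mathbf t)$ yields a cocycle), which is true but should be checked once.

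The genuine gap is the infinite case. The proposition is stated for an arbitrary subring $A\subset B$, and your reduction ``by a Mittag--Leffler-type argument on the inverse system $\{G^{k,1-k}(A_\alpha\langle x\rangle)\}_\alpha$, exactly as in the proof of Lemma \ref{zero}'' does not work. Lemma \ref{zero} is a vanishing statement, proved by the trivial observation that a cochain vanishing on all finite subrings vanishes; surjectivity is of a different nature. Since $G^{k,1-k}(A)=\varprojlim_\alpha G^{k,1-k}(A_\alpha)$ over the directed poset of finite subrings $A_\alpha\subseteq A$, lifting a cocycle $f$ over an infinite $A$ means producing an element of the inverse limit of the (nonempty, by your finite case) sets of lifts of the restrictions $f|_{A_\alpha}$; such an inverse limit over an uncountable directed poset can be empty, and no Mittag--Leffler property of this system is established. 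Moreover, the Mittag--Leffler machinery of Section 7 (Lemma \ref{ml-con}, Proposition \ref{neeman}) is set up for ordinal-indexed chains and takes precisely Proposition \ref{surjective} as its input, via condition (ML0) in Proposition \ref{key}, so invoking it here would be circular. The paper closes this gap differently: it makes the finite-stage lift canonical (the reduction $f=xf+(1-x)f$ together with the section $s$), and then proves that these canonical lifts are compatible under enlarging the finite subring (Lemma \ref{x-ad2} and the computation following it), so that they glue to a lift over $A=\bigcup_\alpha A_\alpha$. Your lift is also canonical in terms of the head/tail values of $f$, so an analogous compatibility-and-gluing argument may well be possible, but it has to be proved; without it the statement for infinite $A$ is unproven, and that is exactly the case needed later in the proof of Theorem \ref{main}.
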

\begin{proof} Let $f\in G^{k,1-k}(A)$ be an arbitrary cocycle. 
We need to show that $f$ has a lift to $G^{k,1-k}(A\langle x\rangle)$. 
Note that for every subring $C\subset B$ every function in
$$\mathrm{Hom}_{\mathbb F_2}(K^{k}_{k}(V_* \uoplus C_*), V_1)
\subseteq
\mathrm{Hom}_{\mathbb F_2}(K^{k}_{k}(V_* \uoplus C_*), V_1 \oplus B_1)$$
is a cocycle, since the multiplication with elements of degree $\ge 1$ in $V_*$ vanishes. 
Moreover, every $\F_2$-vector space homomorphism $K^{k}_{k}(V_* \uoplus A_*) \to V_1$ can be factored through $K^{k}_{k}(V_* \uoplus A_*) \to K^{k}_{k}(V_* \uoplus A\langle x\rangle_*)$ by sending $x$ to zero. 
This implies that the map
$$\mathrm{Hom}_{\mathbb F_2}(K^{k}_{k}(V_* \uoplus A\langle x\rangle_*), V_1)\to\mathrm{Hom}_{\mathbb F_2}(K^{k}_{k}(V_* \uoplus A_*), V_1)$$
is surjective. 
Since every $\F_2$-vector space homomorphism $K^{k}_{k}(V_* \uoplus A_*) \to V_1 \oplus B_1$ can be written as a sum of homomorphisms into $V_1$ and 
$B_1$, respectively, it remains to show that cocycles with values in $B_1$ can be lifted. 

We therefore assume from now on that the cocycle $f\in G^{k,1-k}(A)$ takes values in $B_1$. 
Consider $B_1$ as a $B$-module and write $f=xf+(1-x)f$. 
Note that $xf$ is again a cocycle, since multiplication of $x$ with elements in $V_* \uoplus A_*$ vanishes. 
The definition of the differential then yields that $xf$ is a cocycle. 
That implies that $(1-x)f$ is a cocycle as well. 
Therefore it will be enough to show that $xf$ and $(1-x)f$ have lifts to $G^{k,1-k}(A\langle x\rangle)$. 
Since the roles of $x$ and $1-x$ are symmetrical, we may assume without the loss of generality that $f=xf$. We are going to construct the lift when $A$ is finite first. We will need some preparations to do so. 
For more background on Boolean rings and proofs of the assertions on Boolean rings we refer to appendix \ref{app:Boolean}.

\begin{notn} 
For every Boolean ring $C$, let $\SB(C) = \mathrm{Spec}(C)$ denote the spectrum of $C$. 
For every set $X$, let $\BB(X)$ denote the ring of $\mathbb F_2$-valued functions on $X$. 
Note that every prime ideal in a Boolean ring $C$ is the kernel of a ring homomorphism $C\to\mathbb F_2$, see Proposition \ref{basic} and Theorem \ref{duality}. 
So every element $x\in C$ gives rise to a function in $\BB(\SB(C))$ which furnishes a ring homomorphism $\ff \colon C \to \BB(\SB(C))$. 
This map is an isomorphism by Theorem \ref{sigma}. 
\end{notn} 

\begin{rem}\label{remrem} 
We say that an element $x\in C$ is an {\it atom} if the support of $\ff(x)$ is a one element set, i.e., we have $\ff(x)(p)\neq 0$ for exactly one $p \in \SB(C)$. 
From this perspective the unique orthonormal basis of a finite Boolean ring $C$ is just the collection of all atoms of $C$, see also Corollary \ref{atomic}. 
Therefore, we have a natural bijection between the elements of the unique orthonormal basis of $C$ and the elements of $\SB(C)$. 
We will identify these two sets in all that follows. 
\end{rem}

\begin{defn} 
For every homomorphism $h \colon C\to D$ between Boolean rings let $h^*\colon \SB(D) \to \SB(C)$ be the induced map. 
Let $a \colon A \to A\langle x\rangle$ be the inclusion map. 
Since $a$ is injective, the induced map $a^* \colon \mathbf S(A\langle x\rangle)\to\mathbf S(A)$ is surjective. 
Since every ring homomorphism $A\langle x\rangle\to\mathbb F_2$ is uniquely determined by its restriction to $A$ and $x$, the fibres of $a^*$ have at most two elements. 
Let $s \colon \SB(A) \to \SB(A\langle x\rangle)$ be the section of $a^*$ such that $s(p)$ is the unique element of $(a^*)^{-1}(p)$, if $(a^*)^{-1}(p)$ is a one element set, and $s(p)$ is the unique element of $(a^*)^{-1}(p)$ such that $\mathfrak f(x)(s(p))=1$, otherwise, for every $p \in \SB(A)$.
\end{defn}

\begin{notn} 
Fix a basis $I$ of $V$. For every homomorphism $h \colon C\to D$ between Boolean rings let by slight abuse of notation $h^*$ also denote the map $I\cup\mathbf S(D)\to I\cup\mathbf S(C)$ which is the identity on $I$ and the map induced by $h$ restricted to $\mathbf S(D)$. Similarly let $s$ also denote the map $I\cup\mathbf S(A)\to I\cup\mathbf S(A\langle x\rangle)$ which is the identity on $I$ and the section introduced above, when restricted to $\mathbf S(A)$. Moreover for every sequence $\mathbf t=t_1,t_2,\ldots,t_k\in I\cup\mathbf S(D)$ let $h^*(\mathbf t)$ denote the sequence $h^*(t_1),h^*(t_2),\ldots,h^*(t_k)\in I\cup\mathbf S(C)$, and for every sequence $t_1,t_2,\ldots,t_k\in I\cup\mathbf S(A)$ let $s(\mathbf t)$ denote the sequence $s(t_1),s(t_2),\ldots,s(t_k)\in I\cup\mathbf S(A\langle x\rangle)$. 
\end{notn}

\begin{defn} 
Let $C$ be a subring of $B$. Under the identification in Remark \ref{remrem} above, 
admissible sequences for $C$ are taking values in $I\cup\mathbf S(C)$. A sequence $\mathbf t=t_1,t_2,\ldots,t_k\in I\cup\mathbf S(A\langle x\rangle)$ is {\it $x$-admissible} if it is admissible and $\mathbf t=s(a^*(\mathbf t))$. 
\end{defn}

\begin{lemma}\label{x-ad} 
The following holds:
\begin{enumerate}
\item[$(i)$] if $\mathbf t=t_1,t_2,\ldots,t_k\in I\cup\mathbf S(A\langle x\rangle)$ is $x$-admissible then $a^*(\mathbf t)$ is admissible,
\item[$(ii)$] the maps $a^*$ and $s$ induce a bijection between the set of $x$-admissible sequences in $I\cup\mathbf S(A\langle x\rangle)$ and admissible sequences in $I\cup\mathbf S(A)$. 
\end{enumerate}
\end{lemma} 
\begin{proof} If $a^*(\mathbf t)$ were not admissible, there would be an index $i$ such that $a^*(t_i)=a^*(t_{i+1})$. Since for every $u\in I\cup\mathbf S(A)$ the element $s(u)$ is unique, this means that $t_i=t_{i+1}$ which contradicts that
$\mathbf t$ is admissible. So claim $(i)$ is true. 
By construction $s$ maps admissible sequences in $I\cup\mathbf S(A)$ to $x$-admissible sequences in $I\cup\mathbf S(A\langle x\rangle)$. Therefore $a^*$ and $s$ are maps between these sets, and since they are inverses of each other, we get that claim $(ii)$ holds, too. 
\end{proof}

Now we are going to define a function $f_x\in
\mathrm{Hom}_{\mathbb F_2}(K^{k}_{k}(V_* \uoplus A\langle x\rangle_*),B_1)$ as the unique function such that for every admissible sequence $\mathbf t=t_1,t_2,\ldots,t_k\in I\cup\mathbf S(A\langle x\rangle)$ we have: 
$$f_x(x_{\mathbf t})=\begin{cases}
f(x_{a^*(\mathbf t)})& \text{, if $\mathbf t$ is $x$-admissible,} \\
0 & \text{, otherwise. } \end{cases}$$
For every admissible sequence $\mathbf t=t_1,t_2,\ldots,t_k\in I\cup\mathbf S(A)$ we compute:
$$f_x(x_{\mathbf t})=\sum_{\mathbf u\in(a^*)^{-1}(\mathbf t)}
f_x(x_{\mathbf u})=f_x(x_{s(\mathbf t)})=f(x_{\mathbf t}),$$
where we used the additivity of $f_x$ and part $(ii)$ of Lemma \ref{x-ad} in the first equation, and its definition in the second and third equations. 
Therefore the restriction of $f_x$ onto $K^{k}_{k}(V_* \uoplus A_*)$ is $f$. 
Since the map $a^*$ commutes with translations, and the translates of $x$-admissible sequences are $x$-admissible, in order to show that $f_x$ is a cocycle it will be sufficient to show for every $x$-admissible sequence $\mathbf t=t_1,t_2,\ldots,t_k\in I\cup\mathbf S(A\langle x\rangle)$ that
$$f_x(x_{\mathbf t})\in (t_1,t_k).$$
Note that $xt\in (s(t))$ for every $t\in I\cup\mathbf S(A)$. Therefore
$$f_x(x_{\mathbf t})=f(x_{a^*(\mathbf t)})=xf(x_{a^*(\mathbf t)})
\in x(a^*(t_1),a^*(t_k))\subseteq (xa^*(t_1),xa^*(t_k))\subseteq(t_1,t_k)$$
using that $s(a^*(\mathbf t))=\mathbf t$. This concludes the proof when $A$ is finite.

Since every subring of a Boolean ring is Boolean, and every Boolean ring is the union of the directed set of its finite subrings, we need to show that the lifts we have constructed for finite subrings are compatible in order to prove the claim in general. 
Compatibility means the following: let $A\subseteq\widetilde A$ be a pair of finite subrings of $B$. 
Let $\widetilde f\in G^{k,1-k}(\widetilde A)$ be a cocycle such that $\widetilde f=x\widetilde f$. 
(The latter condition implies that $\widetilde f$ takes values in $B_1$.) 
Let $f\in G^{k,1-k}(A)$ be the restriction of $\widetilde f$ onto $K^{k}_{k}(V_* \uoplus A_*)$. 
Then $f_x$ is the restriction of the similarly constructed lift $\widetilde f_x$ of $\widetilde f$ onto
$K^{k}_{k}(V_* \uoplus A\langle x\rangle_*)$. 

\begin{notn} 
Let $\widetilde a \colon \widetilde A\to\widetilde A\langle x\rangle$ be the inclusion map. Let $\widetilde s \colon I\cup\mathbf S(\widetilde A)\to I\cup\mathbf S(\widetilde A\langle x\rangle)$ be the section of $\widetilde a^*$ defined the same way as $s$ and let the same symbol denote its extension to a map from series in $I\cup\mathbf S(\widetilde A)$ to series in $I\cup\mathbf S(\widetilde A\langle x\rangle)$ by our usual abuse of notation. Let $i:A\to\widetilde A$ and $j:A\langle x\rangle\to\widetilde A\langle x\rangle$ be the inclusion maps. 
\end{notn}

\begin{lemma}\label{x-ad2} 
The following holds:
\begin{enumerate}
\item[$(i)$] if $\mathbf t=t_1,t_2,\ldots,t_k\in I\cup\mathbf S(\widetilde A\langle x\rangle)$ is $x$-admissible then $j^*(\mathbf t)$ is $x$-admissible, too,
\item[$(ii)$] for every admissible $\mathbf t=t_1,t_2,\ldots,t_k\in I\cup
\mathbf S(A)$ the maps $\widetilde a^*$ and $\widetilde s$ induce a bijection between the set of $x$-admissible sequences in $(j^*)^{-1}(s(\mathbf t))$ and admissible sequences in $(i^*)^{-1}(\mathbf t)$. 
\end{enumerate}
\end{lemma} 
\begin{proof} By part $(i)$ of Lemma \ref{x-ad} we already know that $j^*(\mathbf t)$ is admissible. So if  $a^*(\mathbf t)$ were not $x$-admissible, there would be an index $i$ such that $j^*(t_i)\neq s(a^*(j^*(t_i)))$. Then $t_i\neq \widetilde s(\widetilde a^*(t_i))$ either, since $j^*\circ\widetilde s\circ\widetilde a^*=s\circ a^*\circ j^*$, which is a contradiction. So claim $(i)$ holds. Note that
$\widetilde a^*\circ i^*=j^*\circ a^*$ and $\widetilde s\circ j^*=i^*\circ s$. Therefore $\widetilde a^*$ and $\widetilde s$ are maps between these sets, and since they are inverses of each other, we get that claim $(ii)$ holds, too. 
\end{proof}

Now we are ready to show that $f_x$ is the restriction of $\widetilde f_x$. It will be sufficient to check this on $x_{\mathbf t}$ where $\mathbf t=t_1,t_2,\ldots,t_k\in I\cup\mathbf S(A\langle x\rangle)$ is admissible. If $\mathbf t$ is not $x$-admissible, then there is no $x$-admissible sequence in $(j^*)^{-1}(\mathbf t)$ by Lemma \ref{x-ad2}, so
$$\widetilde f_x(x_{\mathbf t})=\sum_{\mathbf u\in(j^*)^{-1}(\mathbf t)}
\widetilde f_x(x_{\mathbf u})=0=f_x(x_{\mathbf t}),$$
where we used the additivity of $\widetilde f_x$ in the first equation, and the definition of $\widetilde f_x$ and $f_x$ in the second and third equations, respectively. If $\mathbf t$ is $x$-admissible, then
\begin{align*}
\widetilde f_x(x_{\mathbf t}) & = \sum_{\mathbf u\in(j^*)^{-1}(\mathbf t)}
\widetilde f_x(x_{\mathbf u})=
\sum_{\substack{ \mathbf u\in(j^*)^{-1}(\mathbf t) \\  
\mathbf u=\widetilde s(\widetilde a^*( \mathbf u)) } }
\widetilde f(x_{\widetilde a^*(\mathbf u)})\\ 
& = \sum_{\mathbf u\in(i^*)^{-1}(a^*(\mathbf t))}\widetilde f(x_{\mathbf u})
=\widetilde f(x_{a^*(\mathbf t)})=f(x_{a^*(\mathbf t)})=f_x(x_{\mathbf t}),
\end{align*}
where we used the additivity of $\widetilde f_x$ in the first equation, the definition of $\widetilde f_x$ in the second equation, part $(ii)$ of Lemma \ref{x-ad2} in the third equation, the additivity of $\widetilde f$ in the fourth equation, the fact that $f$ is the restriction of $\widetilde f$ in the fifth equation, and the definition of $f_x$ in the last equation. 
This finishes the proof of Proposition \ref{surjective}. 
\end{proof}


We can now use Proposition  \ref{surjective} to show the vanishing of Hochschild cohomology groups. 
Let $A\subset B$ be an infinite subring.   
Let $\kappa=|A|$ be its cardinality and choose a bijection $x \colon \kappa\to A$. 
Let $\mathcal{SA}$ denote the category of subrings of $A$ with respect to the inclusions as morphisms. 
By part $(ii)$ of Lemma \ref{numerically_good} we have a functor $\iota \colon \kappa\to\mathcal{SA}$ with $\iota(\alpha)=A_{\alpha}$ for every
$\alpha\in\kappa$. 
Let $F^k,G^k,H^k \colon \kappa^{op}\to\mathcal{AB}$ be the composition of $\iota$ with the contravariant functors:
\[
C\mapsto F^{k-1,1-k}(C),\quad C\mapsto G^{k,1-k}(C),\quad
C\mapsto H^k(C) = \mathrm{HH}^{k,1-k}(V_*\uoplus C_*,V_* \uoplus B_*)
\]
from $\mathcal{SA}$ to $\mathcal{AB}$, respectively. 

\begin{prop}\label{key} 
We have $\underleftarrow{\lim}^nH^k=0$ for every $n>0$.
\end{prop}
\begin{proof} By Lemma \ref{zero} there is a short exact sequence:
\[
\xymatrix{ 0\ar[r] & F^k \ar[r]^-{\partial_{k}} & G^k \ar[r]
& H^k \ar[r] & 0}
\]
of functors. Therefore it will be enough show that both $F^k$ and $G^k$ are 
Mittag-Leffler by Lemma \ref{exact-lim}. Since $F^k$ is just the dual of an inductive system of vector spaces, it is Mittag-Leffler. As we can check that an element of $F^{k,1-k}(C)$ is a cocycle on $(k+2)$-uples of elements of $C$, we have $G^{k,1-k}(A_{\alpha})=\lim_{\beta\in\alpha}G^{k,1-k}(A_{\beta})$
for every limit ordinal $\alpha\in\kappa$, and hence $G^k$ satisfies property (ML2) of Definition \ref{ml-def} .  
By Proposition \ref{surjective} the functor $G^k$ satisfies property (ML0) of Lemma \ref{ml-con}. 
Hence $G^k$ is Mittag-Leffler by Lemma \ref{ml-con}.
\end{proof}


Now we are ready to prove our main 

\begin{thm}\label{main} 
Let $B$ be an infinite Boolean ring. 
Let $A\subseteq B$ be a subring such that $|A|\geq8$. 
Let $\mm<0$ be a negative integer. 
Then 
\[
\HH^{k,\mm}(V_*\uoplus A_*,V_* \uoplus B_*)=0
\]
for every $k\geq0$ such that $k\neq 1-\mm$. 
In particular, we have 
\[
\HH^{n,2-n}(V_* \uoplus B_*)=0 ~\text{for all}~n \ge 3. 
\]
Hence the graded algebra $V_* \uoplus B_*$ is intrinsically formal. 
\end{thm}
\begin{proof} 
We already know that this is true when $A$ is finite by Theorem \ref{thm:maintheoremfinite}, so we may assume without the loss of generality that $A$ is infinite.  
Let $\kappa=|A|$ be its cardinality. 
We prove Theorem \ref{main} by transfinite induction on the cardinality $\kappa$ of $A$. 
This means we may assume that the claim holds for every subring of $A$ of cardinality less than $\kappa$. 
In particular, the claim of the theorem holds for $A_{\alpha}$ for every $\alpha\in\kappa$ by part $(v)$ of Lemma \ref{numerically_good}, 
where $\{A_{\alpha}\subseteq A\mid \alpha\in\kappa\}$ denotes a system of subrings as constructed in \eqref{eq:def_of_A_alpha} in Section \ref{section:ML}. 
Fix an integer $\mm<0$. 
Since the higher limits of identically zero functors vanish, 
all rows of the spectral sequence of Proposition \ref{prop:spectralsequence}:
$$E_2^{p,q} = {\varprojlim_{\alpha}}^p \mathrm{HH}^{q,\mm}(V_*\uoplus (A_{\alpha})_*,V_*\uoplus B_*) \Rightarrow
\mathrm{HH}^{p+q,\mm}(V_*\uoplus A_*,V_*\uoplus B_*)$$
are zero except when $q=1-\mm$. By Proposition \ref{key} all terms of this row except $E_2^{0,1-\mm}={\varprojlim_{\alpha}} \mathrm{HH}^{1-\mm,\mm}(V_*\uoplus (A_{\alpha})_*,V_*\uoplus B_*)$ are zero, too. 
Hence the spectral sequence degenerates and consequently the theorem holds for $A$, too. 
The last assertion follows from Theorem \ref{thm:KadeishvilicritST}. 
\end{proof}


\section{Proof of the main result}\label{section:Main_result}

Let $G$ be a profinite group which is real projective. 
Let $\mathcal X(G)$ denote the conjugacy classes of involutions of $G$ equipped with its natural topology, 
and let $B = C(\Xh(\Gamma),\F_2)$ be the ring of continuous functions from $\mathcal X(G)$ to $\mathbb F_2$ where we equip the latter with the discrete topology. 
Note that $B$ is a Boolean ring. 
Let $B_*$ be the associated Boolean graded algebra.

By Scheiderer's Theorem \ref{cor:scheiderer}, there is a surjective homomorphism of graded $\F_2$-algebras:
\[
\pi \colon H^*(G,\F_2)\to B_*
\]
such that the $n$-th degree component:
\[
\pi_n \colon H^n(G,\F_2)\to B_n
\]
is an isomorphism except perhaps when $n=1$, and it is surjective when $n=1$. 
Let $V\subseteq H^1(G,\mathbb F_2)$ be the kernel of $\pi_1$, and let $V_*$ be the associated dual algebra over $\F_2$. 
By choosing a section
\[
\phi \colon B_1\to H^1(G,\F_2)
\]
of $\pi_1$ we get an isomorphism of graded $\F_2$-algebras:
\[
H^*(G,\F_2)\cong B_* \uoplus V_*.
\]

By Theorem \ref{main}, the connected sum $B_* \uoplus V_*$ satisfies Kadeishvili's vanishing and is intrinsically formal.  
Thus $H^*(G,\F_2)$ satisfies Kadeishvili's vanishing and is intrinsically formal. 
By Corollary \ref{cor:Kadeishvilicrit} this implies that the differential graded algebra $C^*(G,\F_2)$ is formal. 
This finishes the proof of Theorem \ref{thm:mainthmintro}.

Finally, as explained previously, the Galois groups of fields with virtual cohomological dimension one are real projective profinite groups. 
Hence, Proposition \ref{prop:Booleanisquadratic} implies Theorem \ref{thm:intro_Koszul}.


\appendix

\section{Boolean rings}\label{app:Boolean}

In this section we summarise the assertions on Boolean rings we use in Sections \ref{section:def_Boolean_and_dual}, \ref{section:Boolean_finite} and \ref{section:Boolean_infinite}. 

\begin{defn} 
A ring $R$ is called {\it Boolean} if $x^2=x$ for every $x\in R$.
\end{defn}

\begin{examples}\label{ex1.2} 
The field with two elements $\mathbb F_2$ is a Boolean ring. 
In fact, since $x(1-x)=0$ for all $x$ in a Boolean ring, $\F_2$ is the only Boolean integral domain. 
The direct product of Boolean rings is Boolean, and so, for every set $X$, the direct product ring:
\[
\mathbb F_2^X=\prod_{i\in X}\mathbb F_2
\]
is a Boolean ring. 
Now let $X$ be a topological space, and let $\BB(X)$ denote the ring of functions $f \colon X\to\mathbb F_2$ which are continuous with respect to the discrete topology on $\mathbb F_2$. Since the subrings of Boolean rings are Boolean, and $\BB(X)$ is a subring of $\mathbb F_2^X$, we get that $\BB(X)$ is Boolean, too. 
\end{examples}

\begin{prop}\label{basic} 
In a Boolean ring $R$ the following hold:
\begin{enumerate}
\item[$(i)$]  we have $2x=0$ for every $x\in R$,
\item[$(ii)$]  every prime ideal $\mathfrak p$ is maximal, and $R/\mathfrak p$ is the field with two elements,
\item[$(iii)$] we have $(x,y)=(x+y-xy)$ for every $x,y\in R$,
\item[$(iv)$] every finitely generated ideal is principal.
\end{enumerate}
\end{prop}
\begin{proof} Since
\[
2x=(2x)^2=4x^2=4x,
\]
we get that $2x=0$ by subtracting $2x$ from both sides. Now let
$\mathfrak p$ be a prime ideal in $R$. 
Then the quotient $R/\mathfrak p$ is a Boolean ring and an integral domain. 
By Example \ref{ex1.2}, this implies $R/\mathfrak p \cong \F_2$. 
Note that
\[
x(x+y-xy)=x^2+xy-x^2y=x+xy-xy=x.
\]
Hence $x,y\in(x+y-xy)$. 
Since $x+y-xy\in(x,y)$, claim $(iii)$ is clear. Let $I=( x_1,x_2,\ldots,x_n)$ be an finitely generated ideal of $R$. 
Since
\[
I=((x_1,x_2,\ldots,x_{n-1}),x_n),
\]
we may assume by induction on $n$ that $I=(x,y)$ for some $x,y\in R$. The claim now follows from part $(iii)$. 
\end{proof}
\begin{prop}\label{spec} 
The spectrum $\mathrm{Spec}(R)$ of a Boolean ring is compact and totally separated.
\end{prop}
\begin{proof} Since the spectrum of a commutative ring with a unity is compact, the same holds for $\mathrm{Spec}(R)$, too. Recall that a topological space $X$ is totally separated if for any two distinct points 
$x,y\in X$ there exist disjoint open sets $U\subset X$ containing $x$ and $V\subset X$ containing $y$ such that $X$ is the union of 
$U$ and $V$. Now let $\mathfrak p,\mathfrak q\in\mathrm{Spec}(R)$ be two distinct points. Since they are maximal ideals, there is an $x\in R$ such that $x\in\mathfrak p$ and $x\not\in\mathfrak q$. Since $R/\mathfrak p=\mathbb F_2$ by part $(ii)$ of Proposition \ref{basic}, the former is equivalent to $1-x\not\in\mathfrak p$. As usual for every $f\in R$ let $D(f)\subseteq\mathrm{Spec}(R)$ denote the open subset 
\begin{align}\label{eq:def_of_Df}
D(f)=\{\mathfrak p\in\mathrm{Spec}(R)\mid f\not\in\mathfrak p\}.
\end{align}
Then $\mathfrak p\in D(1-x),\mathfrak q\in D(x)$, the intersection $D(x)\cap D(1-x)$ is empty by part $(ii)$ of Proposition \ref{basic}, while the union $D(x)\cup D(1-x)$ is $\mathrm{Spec}(R)$, since if $x,1-x\in\mathfrak m$ for some $\mathfrak m\in\mathrm{Spec}(R)$ then $1\in\mathfrak m$ which is a contradiction. 
\end{proof}

\begin{notn} 
Let $R$ be a Boolean ring. Then for every $a\in R$ the corresponding section of the structure sheaf of $\mathrm{Spec}(R)$ is a continuous function
\[
\sigma(a) \colon \mathrm{Spec}(R) \to \mathbb F_2
\]
by part $(ii)$ of Proposition \ref{basic}, and the furnished map
\[
\sigma \colon R \to \BB(\mathrm{Spec}(R))
\]
is a ring homomorphism. 
\end{notn}

\begin{thm}\label{sigma} 
For every Boolean ring $R$, the map
$\sigma \colon R \to \BB(\mathrm{Spec}(R))$ is an isomorphism. 
\end{thm}
\begin{proof} 
For every $x\in R$ we have $x^n=x$ by induction, so if $x$ is nilpotent, then it is zero. Therefore the nilradical of $R$ is zero, so by Krull's theorem $\sigma$ is injective. 
So we only need to show that $\sigma$ is surjective. 
Let $f \colon \mathrm{Spec}(R)\to\mathbb F_2$ be a continuous function. Then the set
\[
D(f)=\{\mathfrak p\in\mathrm{Spec}(R)\mid f(x)=1\}
\]
is a closed subset, so it is compact. 
But it is also open, so it can be covered by open subsets of the form $D(a)$, where $a\in R$ by the definition of the topology of the spectrum of rings. 
Since $D(f)$ is compact, it can be covered by finitely many such, so
\[
D(f)=D(a_1)\cup D(a_2)\cup\cdots\cup D(a_n)
\]
for some $a_1,a_2,\ldots,a_n\in R$. 
By part $(iv)$ of Proposition \ref{basic}, there is an $a\in R$ such that $(a)=(a_1,\ldots,a_n)$. 
Then
\begin{align*}
D(a)&=\{\mathfrak p\in\mathrm{Spec}(R)\mid a\not\in\mathfrak p\} \\
& = \{\mathfrak p\in\mathrm{Spec}(R)\mid (a_1,\ldots,a_n)\not\subseteq
\mathfrak p\} \\
& = \{\mathfrak p\in\mathrm{Spec}(R)\mid a_i\not\in
\mathfrak p\textrm{ for some $i$}\} \\
& = D(a_1)\cup D(a_2)\cup\cdots\cup D(a_n),\end{align*}
so $D(f)=D(a)$. Since $f$ and $a$ take values in $\mathbb F_2$, we get that $f=a$. 
\end{proof}

\begin{notn}\label{def1.7} 
Let $X$ be a topological space. 
Then for every $p\in X$ the set
\[
\beta(p)=\{ x \in \BB(X)\mid x(p)=0\}
\]
is the kernel of a surjective ring homomorphism $\BB(X)\to\mathbb F_2$, so it is a maximal ideal in $\BB(X)$. 
Consequently, we have an induced map
\[
\beta \colon X\to\mathrm{Spec}(\BB(X)).
\]
For every $x \in \BB(X)$ the pre-image
\[
\beta^{-1}(D(x))=\{p\in X\mid x(p)=1\}
\]
is open. 
Since the sets $\{D(x)\mid x \in \BB(X)\}$ form a sub-basis of
$\mathrm{Spec}(\BB(X))$, we get that 
$\beta \colon X\to \mathrm{Spec}(\BB(X))$ is continuous.  
\end{notn}

\begin{thm}\label{beta} 
When $X$ is compact and totally separated, then
$\beta$ is a homeomorphism. 
\end{thm}

\begin{proof} Since $X$ is compact and $\mathrm{Spec}(\BB(X))$ is Hausdorff, it will be sufficient to show that $\beta$ is a bijection. Let $x,y\in X$ be two distinct points. Since $X$ is totally separated, so there exist disjoint open sets $U\subset X$ containing $x$ and $V\subset X$ containing $y$ such that $X$ is the union of $U$ and $V$. 
Let $f \colon X\to\mathbb F_2$ be the characteristic function of $U$. 
It is in $\mathcal B(X)$ since the complement of $U$ is open, too. Clearly $f\in\beta(y)$, but $f\not\in\beta(x)$, so $\beta$ is injective.

For every ideal $I \triangleleft \BB(X)$ let $Z(I)\subseteq X$ denote the closed subset
\[
Z(I)=\{x\in X\mid f(x)=0\quad(\forall f\in I)\}.
\]
We claim that for every proper ideal $I\triangleleft\BB(X)$ the set $Z(I)$ is non-empty. First consider the case when $I=(f)$ for some $f \in \BB(X)$. Then
\[
Z(I)=\{x\in X\mid f(x)=0\},
\]
so if this set is empty, then $f$ is the identically one function, and hence $I=(1)=\mathcal B(X)$ is not proper, a contradiction. Next consider the case when $I$ is finitely generated; then it is principal by part $(iv)$ of Proposition \ref{basic}, so $Z(I)$ is non-empty by the above. 
Finally, consider the general case. 
Then $Z(I)$ is the intersection of sets of the form $Z(J)$ where $J$ is a finitely generated ideal of $I$. Since the latter collection of sets is closed under finite intersections, and each member is non-empty by the above, we get $Z(I)$ is also non-empty, since $X$ is compact.

Now let $\mathfrak m\triangleleft\BB(X)$ be a maximal ideal. By the above there is an $x\in Z(\mathfrak m)$. Clearly $\beta(x)\supseteq\mathfrak m$, but $\mathfrak m$ is maximal, and hence
$\beta(x)=\mathfrak m$. Therefore $\beta$ is surjective, too. 
\end{proof}

\begin{notn} 
Let $\mathbf{BO}$ denote the category of Boolean rings where morphisms are ring homomorphisms, and let $\mathbf{CTS}$ denote the category of compact, totally separated topological spaces where morphisms are continuous maps. There are two contravariant functors
\[
\BB \colon \mathbf{CTS}\to\mathbf{BO},\quad X\mapsto \BB(X),
\]
which is well-defined as we saw in Examples \ref{ex1.2}, and 
\[
\SB \colon \mathbf{BO}\to\mathbf{CTS},\quad R\mapsto\mathrm{Spec}(R),
\]
well-defined by Proposition \ref{spec}.
\end{notn}

\begin{thm}[Stone duality]\label{duality} 
The functors $\BB$ and $\SB$ are a pair of dualities of categories. 
\end{thm}
\begin{proof} 
By Theorem \ref{sigma}, the map $\sigma$ is a natural isomorphism between the identity of $\mathbf{BO}$ and $\BB \circ \SB$. 
By Theorem \ref{beta}, the map $\beta$ is a natural isomorphism between the identity of $\mathbf{CTS}$ and $\SB \circ \BB$.
\end{proof}

\begin{cor}\label{6.4/2016} 
Let $R$ be a finite Boolean ring. Then the following are equivalent:
\begin{enumerate}
\item[$(i)$] $R$ is finite.
\item[$(ii)$] $R$ is finitely generated as an $\mathbb F_2$-algebra.
\item[$(iii)$] $R$ is Noetherian.
\item[$(iv)$] $R$ is Artinian.
\item[$(v)$] $\mathrm{Spec}(R)$ is finite.
\item[$(vi)$] $R\cong\mathbb F_2^{X}$ for a finite set $X$.
\end{enumerate}
In this case $R\cong\mathbb F_2^{\mathrm{Spec}(R)}$.
\end{cor}
\begin{proof} 
Every Boolean ring is an $\mathbb F_2$-algebra, so if $R$ is finite, then it is finitely generated as an $\mathbb F_2$-algebra, and hence $(i)$ implies $(ii)$. Every finitely generated $\mathbb F_2$-algebra is Noetherian by Hilbert's basis theorem, so $(ii)$ implies $(iii)$. Every Boolean ring is zero dimensional by part $(ii)$ of Proposition \ref{basic}, so if it is Noetherian, it is Artinian by a standard theorem in commutative algebra (see Theorem 8.5 of \cite{AM} on page 90). Therefore $(iii)$ implies $(iv)$. Every Artinian ring is a finite direct product of Artinian local rings (see Theorem 8.7 of \cite{AM} on page 90), so its spectrum is finite. Therefore $(iv)$ implies $(v)$. Now let $R$ be a Boolean ring whose spectrum is finite. Since $\mathrm{Spec}(R)$ is totally separated by Proposition \ref{spec}, it is discrete, and hence $R\cong\mathbb F_2^{\mathrm{Spec}(R)}$ by Theorem \ref{sigma}. 
In particular, $(v)$ implies $(vi)$. If $R\cong\mathbb F_2^{X}$ for a finite set $X$, then $R$ is clearly finite, so $(vi)$ implies $(i)$.
\end{proof}

\begin{notn} 
Let $\mathbf{FBO}$ denote the category of finite Boolean rings where morphisms are ring homomorphisms, and let $\mathbf{FTS}$ denote the category of finite, totally separated topological spaces where morphisms are continuous maps. Note that the latter is the same as the category of finite, discrete topological spaces. There are two restrictions of functors
\[
\BB|_{\mathbf{FTS}} \colon \mathbf{FTS}\to\mathbf{FBO},\quad X\mapsto \BB(X)
\]
and 
\[
\SB|_{\mathbf{FBO}} \colon \mathbf{FBO}\to\mathbf{FTS},\quad R \mapsto\mathrm{Spec}(R),
\]
where the latter is well-defined by Corollary \ref{spec}.
\end{notn}

\begin{thm}\label{fin-duality} 
The functors $\BB|_{\mathbf{FTS}}$ and
$\SB|_{\mathbf{FTS}}$ are a pair of dualities of categories. 
\end{thm}
\begin{proof} 
The restrictions of the natural isomorphisms $\beta$ and $\sigma$ onto $\mathbf{FTS}$ and $\mathbf{FBO}$ are the respective required natural isomorphisms. 
\end{proof}

\begin{defn} 
Let $R$ be a Boolean ring. We say that two elements $x,y\in R$ are orthogonal if $xy=0$. 
We say that an element $a\in R$ is an {\it atom} if it is non-zero and cannot be written as the sum of two non-zero orthogonal elements of $R$. 
The {\it support} of an element $x\in R$ is the subset $D(x)\subseteq\mathrm{Spec}(X)$ introduced in \eqref{eq:def_of_Df}.  
\end{defn}

\begin{lemma}\label{ortho} 
Let $R$ be a Boolean ring. Then the following holds:
\begin{enumerate}
\item[$(i)$] Two elements $x,y\in R$ are orthogonal if and only if the intersection of their supports is empty. 
\item[$(ii)$] A non-zero element $a\in R$ is an atom if and only if its support cannot be written as the disjoint union of two non-empty open and closed subsets. 
\item[$(iii)$] Every pair of different atoms of $R$ are orthogonal to each other. 
\end{enumerate}
\end{lemma}
\begin{proof} Note that the support of the product $xy$ is the intersection of the supports of $x$ and $y$. Since the support of an element of $R$ is empty if and only if it is zero by Theorem \ref{sigma}, claim $(i)$ follows. 
If $a=x+y$ such that $x,y$ are orthogonal and both non-zero, then the support of $a$ is the disjoint union of the supports of $x$ and $y$ by part
$(i)$. 
On the other hand if the support of $a$ is the disjoint union of the non-empty open and closed subsets $X$ and $Y$, there are elements $x,y\in R$ whose support is $X$, $Y$, respectively, by Theorem \ref{sigma}. 
By part $(i)$ these are orthogonal, non-zero and their sum has the same support as $a$. 
So $a=x+y$, and hence claim $(ii)$ is true. 

Let $a,b\in R$ be two atoms whose product is non-zero. 
Then $a=ab+a(1-b)$ and $aba(1-b)=a^2(b-b^2)=0$, so $a(1-b)=0$ by the definition of atoms. 
We get that $a=ab$. The same reasoning for $b$ show that $b=ab$. 
Therefore $a=b$, and hence $(iii)$ holds. 
\end{proof}

\begin{cor}\label{atomic} 
Let $R$ be a finite Boolean ring. 
Then the atoms of $R$ form a natural basis of $R$ whose elements are orthogonal to each other. 
Moreover, every orthogonal basis consists of atoms. 
\end{cor}
\begin{proof} By Corollary \ref{6.4/2016},  the topological space
$\mathrm{Spec}(R)$ is finite, discrete, and $R\cong\mathbb F_2^{\mathrm{Spec}(R)}$. 
Therefore, an element of $R$ is an atom if and only if its support is a point by claim $(ii)$ of Lemma \ref{ortho}. 
These clearly form a basis of $R$ and they are orthogonal by claim $(iii)$ of Lemma \ref{ortho}. 

Now let $e_1,e_2,\ldots,e_n$ be an orthogonal basis. 
We need to show that each $e_i$ is an atom. 
We may assume without the loss of generality that $i=1$. 
Let $x$ be characteristic function of an element of the support of $e_1$.  
Then $x$ is an atom. 
The support of $e_1$ and $e_i$, $i\neq 1$, is disjoint since they are orthogonal. 
Hence $x$ and $e_i$ are orthogonal, so $xe_i=0$. 
Now write $x$ as a linear combination $\sum_{j\in J}e_j$ of the $e_i$. 
Since $0 = xe_i = \sum_{j\in J}e_je_i$ only if $i$ is not in $J$, 
the above argument implies $x=0$ or $x=e_1$. 
The former is not possible, since $x$ is not zero. 
Hence we get $x=e_1$, and $e_1$ is an atom. 
\end{proof}


\section{Proof of Proposition \ref{prop:spectralsequence}}\label{section:appendix}

For lack of a reference known to the authors, we provide a proof of the existence of the spectral sequence claimed in Proposition \ref{prop:spectralsequence}. 
The proof is an adjustment of the construction of a spectral sequence for the composition of a right derived functor with an inductive limit in \cite[Chapter 4]{jensen}. 
Even though Hochschild cohomology is a right derived functor we need to adjust for the fact that we take Hochschild cohomology of a system over varying base rings. 

\begin{proof}[Proof of Proposition \ref{prop:spectralsequence}] 
We recall from Remark \ref{rem:computing_graded_HH} and \eqref{eq:HH_iso_graded_ST} that we can identify the graded Hochschild cohomology of $A$ and $M$ 
with the cohomology 
\begin{align*}
\HH^{n,s}(A,M) = H^n(\uHom_{\F_2}(A^{\otimes *}, M[s]), d^*).
\end{align*} 
To compute the higher inverse limits occurring in the spectral sequence we will use the resolutions of \cite[Chapter 4]{jensen}. 
Let $\{S_{\alpha}, f_{\alpha\beta}\}_{\alpha}$ be a projective system of $\F_2$-vector spaces. 
We denote by $(\Pi^* S_{\alpha}, \delta^*)$ the cochain complex 
defined in \cite[Chapter 4, page 32]{jensen} whose $n$th space $\Pi^n S_{\alpha}$ is the product 
\[
\prod_{\alpha_0 \le \ldots \le \alpha_n} S_{\alpha_0 \ldots \alpha_p} ~ \text{with}~S_{\alpha_0 \ldots \alpha_n} = S_{\alpha_0}
\]
consisting of families $s_{\alpha_0\ldots \alpha_n}$ of elements in $S_{\alpha_0}$ indexed over tuples $\alpha_0 \le \ldots \le \alpha_n$ in $I$. 
These families inherit the structure of $\F_2$-vector spaces with component-wise operations.  
By \cite[Th\'eor\`eme 4.1]{jensen}, the $p$th higher inverse limit of the system $\{S_{\alpha}, f_{\alpha\beta}\}$ equals the $p$th cohomology group of this cochain complex, i.e.,
\[
{\varprojlim_{\alpha}}^p S_{\alpha} = H^p(\Pi^*S_{\alpha}, \delta^*). 
\]

For an inductive system $\{T_{\alpha}, g_{\alpha\beta}\}_{\alpha}$, we denote by $(\Sigma^*T_{\alpha}, \partial^*)$ the chain complex defined in \cite[page 33]{jensen} with 
\[
\Sigma^nT_{\alpha} = \bigoplus_{\alpha_0 \le \ldots \le \alpha_n} T_{\alpha_0 \ldots \alpha_n} ~ \text{with}~T_{\alpha_0 \ldots \alpha_n} = T_{\alpha_0}.
\]
The chain complex $\Sigma^*T_{\alpha}$ is dual to $\Pi^*$ in the sense that 
\begin{align}\label{eq:dualSigmaPi}
\uHom_{\F_2}(\Sigma^*T_{\alpha}, M) \cong \Pi^*\uHom_{\F_2}(T_{\alpha},M).
\end{align}

Now we construct the spectral sequence following \cite[page 34--35]{jensen}. 
We denote the transition maps of the directed system of subrings $A_{\alpha}$ of $A$ by $\iota_{\alpha\beta} \colon A_{\beta} \into A_{\alpha}$. 
We define a double complex $C^{*,*}= \uHom_{\F_2}(\Sigma A_{\alpha}, M)$ by setting $C^{p,q}$ to be
\begin{align*}
C^{p,q} = \uHom_{\F_2} \left( \bigoplus_{\alpha_0 \le \ldots \le \alpha_p} (A_{\alpha_0 \ldots \alpha_p})^{\otimes q}, M \right). 
\end{align*}
The differential in the $p$-direction is the one induced by $\partial^*$. %
For fixed $p$ and $A_{\alpha_0 \ldots \alpha_p} = A_{\alpha_0}$, the differential in the $q$-direction is the differential $d^*$ of the complex $\uHom_{\F_2} ((A_{\alpha_0 \ldots \alpha_p})^{\otimes q}, M)$.

We consider the two spectral sequences that arise from this double complex. 
First let $p$ be the filtration-index. 
Then, using isomorphism \eqref{eq:dualSigmaPi}, we get 
\begin{align*}
{E'_1}^{p,q} & = \prod_{\alpha_0 \le \ldots \le \alpha_p} H^q  \uHom_{\F_2} ((A_{\alpha_0 \ldots \alpha_p})^{\otimes *}, M) \\
 & =  \prod_{\alpha_0 \le \ldots \le \alpha_p} \HH^{q,s}(A_{\alpha_0 \ldots \alpha_p}, M). 
\end{align*}
By \cite[Th\'eor\`eme 4.1]{jensen} applied to the projective system $\{ \HH^{q,s}(A_{\alpha}, M), \HH^{q,s}(f_{\alpha\beta}, M) \}_{\alpha}$, taking cohomology in the $p$-direction gives
\begin{align*}
{E'_2}^{p,q} = H^p \left(\prod_{\alpha_0 \le \ldots \le \alpha_p} \HH^{q,s}(A_{\alpha_0 \ldots \alpha_p}, M) \right)  
= {\varprojlim_{\alpha}}^p \HH^{q,s}(A_{\alpha}, M).
\end{align*}

Now let $q$ be the filtration-index. 
Taking cohomology in the $p$-direction gives
\begin{align*}
{E''_1}^{p,q} = H^p (\uHom_{\F_2}(\Sigma^* A_{\alpha}, M[s]). 
\end{align*}
Regarded as a graded $\F_2$-vector space, $M[s]$ is an injective object in the category of graded $\F_2$-vector spaces. 
Hence taking cohomology commutes with $\uHom_{\F_2}(-, M[s])$. 
By \cite[page 33]{jensen}, the complex $(\Sigma^*T_{\alpha}, \partial^*)$ is acyclic and $H_0(\Sigma^* T_{\alpha}) = \varinjlim_{\alpha} T_{\alpha}$. 
Since direct sums commute with tensor products, we get 
\begin{align*}
{E''_1}^{p,q} = \begin{cases} \uHom_{\F_2}(\varinjlim_{\alpha} (A_{\alpha})^{\otimes q}, M[s]) & \text{if}~ p=0 \\
0 & \text{if}~ p>0.
\end{cases}
\end{align*}
Since inductive limits commute with tensor products, we have 
\[
\uHom_{\F_2}(\varinjlim_{\alpha} (A_{\alpha})^{\otimes q}, M[s]) \cong \uHom_{\F_2}(A^{\otimes q}, M[s]) 
\]
for every $q$. 
Finally, taking cohomology in the direction of $q$ gives 
\begin{align*}
{E''_2}^{p,q} = \begin{cases} H^q(\uHom_{\F_2}(A^{\otimes *}, M[s])) = \HH^{q,s}(A, M) & \text{if}~ p=0 \\
0 & \text{if} ~ p>0. 
\end{cases}
\end{align*}
This spectral sequence degenerates. 
Hence we get that there is a spectral sequence of  the form 
\[
E_2^{p,q} = {\varprojlim_{\alpha}}^p \HH^{q,s}(A_{\alpha}, M) \Rightarrow 
\HH^{p+q,s}(A, M). \qedhere
\]
\end{proof}


\end{document}